\tikzset{cross/.style={cross out, draw=black, minimum size=2.5*(#1-\pgflinewidth), inner sep=2pt, outer sep=0.5pt},
	cross/.default={1pt}}
\newcommand{\E}{\mathbb{E}}
\newcommand{\N}{\mathbb{N}}
\renewcommand{\P}{\mathbb{P}}
\newcommand{\Q}{\mathbb{Q}}
\newcommand{\R}{\mathbb{R}}
\newcommand{\Z}{\mathbb{Z}}
\newcommand{\FF}{\mathcal{F}}
\newcommand{\GG}{\mathcal{G}}
\newcommand{\HH}{\mathcal{H}}
\newcommand{\JJ}{\mathcal{J}}
\newcommand{\LL}{\mathcal{L}}
\newcommand{\MM}{\mathcal{M}}
\newcommand{\NN}{\mathcal{N}}
\newcommand{\TT}{\mathcal{T}}
\newcommand{\bF}{\textnormal{\textbf{F}}}
\newcommand{\bY}{\textnormal{\textbf{Y}}}
\newcommand{\bW}{\textnormal{\textbf{W}}}
\newcommand{\bH}{\textnormal{\textbf{H}}}
\newcommand{\bG}{\textnormal{\textbf{G}}}
\newcommand{\bX}{\textnormal{\textbf{X}}}
\newcommand{\bB}{\textnormal{\textbf{B}}}
\newcommand{\bx}{\mathbf{x}}
\newcommand{\bt}{\mathbf{t}}
\newcommand{\boo}{\mathbf{0}}
\newcommand{\bd}{\textnormal{dist}}
\newcommand{\bbd}{\mathbf{d}}
\newcommand{\bsigma}{\boldsymbol{\sigma}}
\newcommand{\bbeta}{\boldsymbol{\beta}}
\newcommand{\btau}{\boldsymbol{\tau}}
\newcommand{\aut}{\textnormal{Aut}}
\newcommand{\tZ}{\widetilde{Z}}
\newcommand{\oZ}{\overline{Z}}
\newcommand{\oGamma}{\overline{\Gamma}}
\newcommand{\oXi}{\overline{\Xi}}
\newcommand{\osigma}{{\overline{\sigma}}}
\newcommand{\cov}[2]{\langle #1 \, ; \, #2\rangle}
\newcommand{\iprod}[1]{\langle #1 \rangle}
\newcommand{\don}{\mathds{1}}
\DeclareMathOperator*{\argmin}{arg\,min} 
\DeclareFontFamily{OMX}{MnSymbolE}{}
\DeclareSymbolFont{MnLargeSymbols}{OMX}{MnSymbolE}{m}{n}
\DeclareFontShape{OMX}{MnSymbolE}{m}{n}{
	<-6>  MnSymbolE5
	<6-7>  MnSymbolE6
	<7-8>  MnSymbolE7
	<8-9>  MnSymbolE8
	<9-10> MnSymbolE9
	<10-12> MnSymbolE10
	<12->   MnSymbolE12
}{}
\DeclareFontShape{OMX}{MnSymbolE}{b}{n}{
	<-6>  MnSymbolE-Bold5
	<6-7>  MnSymbolE-Bold6
	<7-8>  MnSymbolE-Bold7
	<8-9>  MnSymbolE-Bold8
	<9-10> MnSymbolE-Bold9
	<10-12> MnSymbolE-Bold10
	<12->   MnSymbolE-Bold12
}{}
\let\llangle\@undefined
\let\rrangle\@undefined
\DeclareMathDelimiter{\llangle}{\mathopen}%
{MnLargeSymbols}{'164}{MnLargeSymbols}{'164}
\DeclareMathDelimiter{\rrangle}{\mathclose}%
{MnLargeSymbols}{'171}{MnLargeSymbols}{'171}
\newtheorem{theorem}{Theorem}[section]
\newtheorem{lemma}[theorem]{Lemma}
\newtheorem{proposition}[theorem]{Proposition}
\newtheorem{corollary}[theorem]{Corollary}
\newtheorem{thm}{Theorem}[section]
\theoremstyle{definition}
\theoremstyle{definition}
\newtheorem{definition}[theorem]{Definition}
\newtheorem*{remark-non}{Remark}
\numberwithin{equation}{section}
\renewcommand{\thefootnote}{\fnsymbol{footnote}}
\title[Ising model on trees and factor of IID]{Ising model on trees and factors of IID}
\subjclass[2010]{37A35, 
	37A50, 
	60G10, 
60K35} 
\keywords{Ising model, regular trees, reconstruction, factor of IID}
\author{Danny Nam}
\address{\newline Department of Mathematics \newline Princeton University \newline Princeton, NJ 08544 \newline \textup{\tt dhnam@math.princeton.edu}
\newline \textup{\tt asly@math.princeton.edu}
 \newline \textup{\tt lingfuz@math.princeton.edu}}
\author{Allan Sly}
\author{Lingfu Zhang}
\pgfplotsset{compat=1.16}
\begin{document}
	\bibliographystyle{acm}
	
	\renewcommand{\thefootnote}{\arabic{footnote}} \setcounter{footnote}{0}

	\begin{abstract}
	We study the ferromagnetic Ising model on the infinite $d$-regular tree under the free boundary condition. This model is known to be  a factor of IID  in the uniqueness regime, when the inverse temperature $\beta\ge 0$ satisfies $\tanh \beta \le (d-1)^{-1}$. However, in the reconstruction regime ($\tanh \beta > (d-1)^{-\frac{1}{2}}$), it is not a factor of IID. We construct a factor of IID for the Ising model beyond the uniqueness regime via a strong solution to an infinite dimensional stochastic differential equation which partially answers a question of Lyons \cite{l17}. The solution $\{X_t(v) \}$ of the SDE is distributed as
	\[
	X_t(v) = t\tau_v + B_t(v),
	\]
	where $\{\tau_v \}$ is an Ising sample and $\{B_t(v) \}$ are independent Brownian motions indexed by the vertices in the tree.
	Our construction holds whenever $\tanh \beta \le c(d-1)^{-\frac{1}{2}}$, where $c>0$ is an absolute constant.
	\end{abstract}
	
	\maketitle

	\section{Introduction}

Let $\TT=(\TT^d,\rho)$ be the infinite $d$-regular tree rooted at a vertex $\rho$. For two measurable spaces $\Omega_0$ and $\Omega_1$, a map $\phi: \Omega_0^{\TT} \to \Omega_1^\TT$ is called a $\TT$\textbf{-factor} if it is measurable in terms of the product $\sigma$-algebra and satisfies
\begin{equation*}
\phi(\eta (\omega)) = \eta(\phi(\omega))  \quad (\eta \in \aut(\TT), \ \omega \in \Omega_0^\TT),
\end{equation*}
where $\aut(\TT)$ denotes the automorphism group of $\TT$, that is, the collection of graph isomorphisms from $\TT$ to itself. We are interested in the notion of \textit{factor of IID} on $\TT$, which is an ergodic property of measures defined as follows. 

\begin{definition}[Factor of IID on $\TT$]\label{def:fiid}
	For a measure $\nu$ on a measurable space $\Omega_1^\TT$, $\nu$ is called a \textbf{factor of IID} if there exists a measurable space $\Omega_0$, a measure $\mu_0$ on $\Omega_0$, and a $\TT$-factor $\phi:\Omega_0^\TT \to \Omega_1^\TT$ such that $\nu$ is the $\phi$-push-forward of the product measure $\mu_0^{\otimes \TT}$.
\end{definition}

Besides the classical works from ergodic theory \cite{o70a,o70b,ow87,b10}, study of factor of IIDs has drawn extensive interests in probability theory, see e.g. \cite{hpps09,hls11,ln11,t11,abg12,as20,rs20,s20a,s20b}. In particular, a factor of IID on $\TT$ can be interpreted as an infinite analogue of local algorithms on the random $d$-regular graph (whose limiting local structure is $\TT$) \cite{cghv15,gs17,rv17}. For a detailed introduction to this concept and related works in probability theory, we refer to \cite{l17} and the references therein.

We study the ferromagnetic Ising model defined on $\TT$ under the free boundary condition, and determine if it is a factor of IID. This model is also called as  \textit{binary symmetric channel on trees}, or  \textit{broadcasting problem on trees}, and has drawn substantial interest not only from probability theory and statistical mechanics, but also from information theory and theoretical computer science (see, e.g., \cite{bcmr06,dm10,ekps00,l89,mms12,m01,mp03}). When the inverse temperature $\beta$ satisfies $\tanh\beta \le (d-1)^{-1}$, the Ising model on $\TT$ has the unique Gibbs measure and is known to be a factor of IID. On the other hand, when $\tanh\beta > (d-1)^{-\frac{1}{2}}$, it is in the reconstruction regime (see Section \ref{subsec:intro:ising} for its definition) and the second author proved that it is not a factor of IID (\cite[Theorem 3.1]{l17}). However, it is not known whether the Ising model on $\TT$ is a factor of IID when $(d-1)^{-1}<\tanh \beta \le (d-1)^{-\frac{1}{2}}$. Our main result proves that it is a factor of IID beyond the uniqueness regime to a certain extent, partially answering a question of Lyons \cite{l17}.

\begin{thm}\label{thm:main}
	There exist absolute constants $c,d_0>0$ such that for all $d\ge d_0$ and $\beta\ge 0$ with $ \tanh\beta \le c(d-1)^{-\frac{1}{2}}$, the Ising model on $\TT^d$ under the free boundary condition at inverse temperature $\beta$ is a factor of IID.
\end{thm}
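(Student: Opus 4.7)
The plan is to realize the Ising configuration $\tau$ as a measurable, $\aut(\TT)$-equivariant functional of a family of independent Brownian motions $\{W_\cdot(v)\}_{v\in\TT}$, which in turn are factors of IID via any Schauder-type expansion of a Brownian motion into IID Gaussian coefficients. Concretely, I would construct a strong solution to the infinite-dimensional SDE
\[
dX_t(v) \;=\; M_t(v)\,dt + dW_t(v), \qquad X_0(v)=0, \qquad v\in\TT,
\]
where the drift is the nonlinear filtering term
\[
M_t(v) \;=\; \E\bigl[\tau_v \bigm| \FF_t^X\bigr], \qquad \FF_t^X \,:=\, \sigma\bigl(X_s(u):\,u\in\TT,\ 0\le s\le t\bigr).
\]
Granted such a strong solution $X$, a Girsanov computation identifies its joint distribution with $X_t(v)=t\tau_v+B_t(v)$ for $\tau$ Ising and $B$ an independent Brownian motion, so the factor-of-IID map exhibiting $\tau$ is the tail functional $v\mapsto \lim_{t\to\infty}X_t(v)/t$.

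The first step is to turn $M_t(v)$ into an explicit, equivariant functional of $X$. By Bayes' rule, conditional on $\FF_t^X$ the configuration $\tau$ is distributed as an Ising measure tilted by a product Gaussian likelihood, i.e.\ an Ising model on $\TT$ with site-dependent external fields $X_t(u)$. Hence $M_t(v)$ is the expectation of $\tau_v$ under this tilted Ising measure, which I would define rigorously as the thermodynamic limit of its finite-volume approximations $M_t^{(R)}(v)$ at the ball of radius $R$ around $v$, using exponential decay of Ising site-to-site influence to bound truncation errors uniformly in $t$.

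The second and central step is a Picard construction on the Banach space of adapted, $\aut(\TT)$-equivariant processes. Setting $\Phi(X)_t(v):=W_t(v)+\int_0^t M_s(v)\,ds$, I would show that $\Phi$ is a strict contraction in an $L^2$ norm. The key input is the filtering identity $\partial M_t(v)/\partial X_t(u)=\Cov(\tau_u,\tau_v\mid \FF_t^X)$, which on the tree should be controllable in $L^2$ by a Kesten--Stigum-type computation: shells at tree distance $k$ from $v$ contain $(d-1)^k$ vertices, each contributing at most $(\tanh\beta)^{2k}$ to the squared sensitivity, so the total is $\sum_k \bigl((d-1)\tanh^2\beta\bigr)^k$. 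This sum produces a strict contraction exactly in the regime $\tanh\beta \le c(d-1)^{-1/2}$ with a small absolute $c$, which is where the hypothesis enters. Iterating $\Phi$ then yields a unique strong equivariant solution $X$, and the Girsanov identification of the law is by now standard.

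The step I expect to be the main technical obstacle is this quantitative sensitivity bound uniformly in $t$. Because $X_t(u)$ grows linearly in $t$, the posterior is effectively an Ising model on $\TT$ with arbitrarily large and spatially varying external fields, and one must verify that site-to-site influences in this random-field model still obey the Kesten--Stigum-style $L^2$ decay available in the pure case. I would approach this via an inductive belief-propagation recursion on the tree, bounding conditional covariances in $L^2$ with respect to the randomness of the path $X$, and combining it with Gaussian concentration of $X_t(u)$ around $t\tau_u$ to handle atypical field configurations. Once this sensitivity estimate and the resulting contraction are established, existence, uniqueness, $\aut(\TT)$-equivariance, and the identification of the law of $X$ follow from standard arguments, and the tail map $v\mapsto \lim_t X_t(v)/t$ exhibits the free-boundary Ising measure as a factor of IID in the claimed regime.
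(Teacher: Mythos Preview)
Your overall architecture matches the paper exactly: an SDE whose drift is the posterior mean of $\tau_v$ given the path, Girsanov identifying the law of the solution with $t\btau+\bB_t$, and recovery of $\btau$ from the large-$t$ behavior of $X$. The gap is in the contraction step.

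The bound $\sum_u \bigl(\partial M_t(v)/\partial X_t(u)\bigr)^2 \le \sum_k (d-1)^k\theta^{2k}$ is correct (pointwise, even), but it does not yield a contraction of $\Phi$ in any norm that makes sense on the infinite tree. To pass from $\|X-X'\|$ to $\|\Phi(X)-\Phi(X')\|$ you need an operator-type bound on the Jacobian; for the natural sup-over-vertices $L^2(\P)$ norm, the relevant quantity is $\sum_u |\Cov(\tau_u,\tau_v\mid X_s)| \le \sum_k (d-1)^k\theta^k$, which diverges as soon as $\theta>(d-1)^{-1}$. What you computed is the $\ell^2$ norm of a row of the Jacobian, not a Lipschitz constant: the increments $X_s(u)-X'_s(u)$ are neither independent across $u$ nor independent of the random covariances $\Cov(\tau_u,\tau_v\mid X_s)$, so the cross terms in the square do not vanish. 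This is precisely the obstruction the paper flags when explaining why the Glauber-dynamics route fails beyond uniqueness.

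The paper therefore does not attempt a Picard iteration. It builds strong solutions $\bX^R$ of the \emph{finite} systems on $\TT_R$ (where existence and uniqueness are trivial) and proves $\E\bigl[(X_t^R(u)-X_t^{R-1}(u))^2\bigr]\to 0$ geometrically in $R$ (Proposition~\ref{prop:L2cauchy}). This is done by interpolating the coupling constant on $\partial E_R$ and expanding $\bH_t=\partial_\gamma\bY^\gamma_t$ as a Dyson series $\sum_k\int \MM_{t_k}\cdots\MM_{t_2}\NN_{t_1}\,d\bt$ with $\MM_t(u,v)=\cov{\sigma_u}{\sigma_v}_{\pi^{\bY_t}}$. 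After squaring and taking expectation, the decisive estimate is \emph{not} on individual covariances but on the expectation of a \emph{product} of covariances along two paths sharing an endpoint (Proposition~\ref{prop:chain-expect}): one obtains $(C\theta)^L$ where $L$ is the length of the \emph{closed} walk $u_-\to u_0\to\cdots\to u_k\to u_+\to u_-$, including an extra factor $\theta^{\bd(u_-,u_+)}$ that any pointwise bound on the covariances would miss. It is exactly this extra factor that makes the sum over paths converge under $(d-1)\theta^2<c$ rather than merely $(d-1)\theta<1$. The proof is not a belief-propagation recursion but an inductive coupling of two tilted Ising measures along the geodesic $[u_-,u_+]$ (Section~\ref{sec:inductivecoup}), exploiting sign-symmetry cancellations in the posterior under the random external field. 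Your proposal does not identify this mechanism: the ``quantitative sensitivity bound uniformly in $t$'' that you flag as the main obstacle is actually the wrong target, because what is needed is a second-moment bound on \emph{products} of sensitivities with random coefficients, and that is where essentially all of the work lies.
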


A common approach to show that a measure is a factor of IID is the \textit{divide-and-color} method {from the study of Random Cluster Model (RCM)} (see \cite{st17} and references therein for details). This is particularly useful to study the Ising/Potts model, {which can be viewed as RCM through their natural connections to FK percolation, and each cluster is assigned an independent color.} If $\beta$ is small (i.e. in the high temperature regime), the percolation clusters are a.s. finite, we can construct a factor of IID by {choosing the color of each cluster in an automorphism invariant way}.  Unfortunately, in our case, this method only works for $\tanh\beta \le (d-1)^{-1}$, as otherwise there exist infinite clusters. Thus proving Theorem \ref{thm:main} requires a different idea.

One question asked by Steif and Tykesson in \cite{st17} is whether there were examples of RCM, such that there are infinite clusters but the coloring process is still a factor of IID.
In \cite{sz19}, the second and third author verified that the stationary measures of the voter model on $\Z^d$ are factors of IID, which is another case where {there are infinite clusters in the corresponding RCM, and} a naive divide-and-color method does not work. Their approach was to utilize the \textit{coalescing random walk}, which is a dual model whose  $t\to\infty$ limit gives the stationary measure of the voter model, and is a factor of IID at all finite times by the divide-and-color method. They constructed a factor of IID at $t\to\infty$ limit by a clever coupling of the finite-time divide-and-color configurations,  where all vertices change their color only finitely many times a.s.~as $t$ goes to infinity. However, a similar approach fails in our case; see Section \ref{subsec:intro:gd} for details.

Our approach is to construct a factor of IID from the independent one-dimensional standard Brownian motions $\bW = \{(W_t(v))_{t\ge 0}\}_{v\in \TT}$. We construct an infinite-dimensional  system of stochastic differential equations whose strong solution recovers the Ising model in the $t\to\infty$ limit. Constructing the desired strong solution  is done using the approximation by the corresponding finite-dimensional system of SDEs and requires technically sophisticated computations. Although our method works for large $d$ and up to the point where $\tanh\beta \le c(d-1)^{-\frac{1}{2}}$, we conjecture that the Ising model on $\TT$ should be a factor of IID for all $d\ge 3$ and $\tanh\beta \le (d-1)^{-\frac{1}{2}}$.

\subsection{Ising model on trees}\label{subsec:intro:ising}
In this subsection, we briefly review the definition of the Ising model and its basic properties. 
On a finite graph $\GG$ with edge set $E(\GG)$, the \textit{Ising model on $\GG$ under the free boundary condition at inverse temperature $\beta$} is a probability measure $\pi_\GG$ on $\{\pm 1 \}^\GG$ defined as follows. For $\bsigma = (\sigma_v)_{v\in \GG}$,
\begin{equation}\label{eq:def:ising:noext}
\pi_\GG(\bsigma) = \frac{1}{Z_\GG} \exp \left(\beta \sum_{(u,v)\in E} \sigma_u\sigma_v \right),
\end{equation}
where $Z_\GG$ is the normalizing constant given by $Z_\GG = \sum_{\bsigma\in \{\pm 1\}^\GG} \exp \left(\beta \sum_{(u,v)\in E} \sigma_u\sigma_v \right)$. We will be interested in the ferromagnetic case where $\beta$ is non-negative. 

For given $d\in \N$, let $(\TT,\rho) = (\TT^d,\rho)$ denote the inifnite $d$-regular tree rooted at $\rho$, and let $\TT_R$ be the depth-$R$ subtree defined by
\begin{equation}\label{eq:def:TR}
\TT_R =\TT_R^d:= \{v\in \TT: \textnormal{dist}(\rho,v) \le R \}.
\end{equation} 
It is well-known that $\pi_{\TT_R}$ converges in product $\sigma$-algebra.
We denote the limit as $\pi_\TT$, which is called the \emph{infinite-volume Gibbs measure on $\TT$ with free boundary condition}, and is the main object of interest in this paper. 
The measure $\pi_\TT$ can also be constructed by a recursive Markovian fashion as follows. Let $\theta:= \tanh \beta$, and consider the $2\times 2$ transition matrix 
\begin{equation*}
P:=
\begin{pmatrix}
\frac{1+\theta}{2} & \frac{1-\theta}{2} \\
\frac{1-\theta}{2} & \frac{1+\theta}{2}
\end{pmatrix}.
\end{equation*}
This defines a Markov chain on the single-spin space $\{-1,+1\}$: for instance, $+1$ becomes $-1$ with probability $\frac{1-\theta}{2}$, and stays at $+1$ with probability $\frac{1+\theta}{2}$. Then, it is well-known that $\pi_\TT$ is equivalent to the law of $\bsigma$ generated by the following recursive scheme:
\begin{itemize}
	\item At the root $\rho$, set $\sigma_\rho=\pm 1$  each with probability $\frac{1}{2}.$
	
	\item For each edge $(u,v) $ in $\TT$, suppose that $\textnormal{dist}(\rho,u)+1= \textnormal{dist}(\rho,v)$ and $\sigma_u$ is determined. Then, $\sigma_v$ is obtained by a single-step Markov chain from $\sigma_u$ with respect to the transition matrix $P$.
\end{itemize}
This construction explains why the model is also called the \textit{broadcasting problem on trees}: a vertex passes its spin correctly to a child with probability $\theta$, and otherwise (i.e., with probability $1-\theta$) it delivers a randomized spin that is either $+1$ or $-1$ with equal probability. This procedure happens independently at each edge.

We now introduce the two thresholds for $\beta$ in the Ising model on $\TT$.
The first is the \textit{uniqueness threshold} $\beta_c:=\tanh^{-1}((d-1)^{-1})$.
When $\beta \le \beta_c$, it is known that the infinite-volume Gibbs measure is unique regardless of the boundary condition;
when $\beta > \beta_c$, the measure $\pi_{\TT}$ is not the unique infinite-volume Gibbs measure on $\TT$.
For detailed introduction on this subject, we refer to \cite{l89} and \cite[ Chapter 17]{mm09}. For its relation to the FK percolation {in the context of RCM}, see \cite[Section 1.3]{st17}.

Besides the uniqueness threshold, the \textit{reconstruction threshold} $\beta_r = \tanh^{-1}((d-1)^{-\frac{1}{2}})$ gives the phase transition for whether the reconstruction problem is solvable. For $\beta\le \beta_r$, 
given by $\bsigma \sim \pi_\TT$,
it is impossible to guess $\sigma_\rho$ better than probability $\frac{1}{2}$ in the $R\to \infty$ limit from just looking at $(\sigma_v)_{v\in \TT\setminus \TT_R}$.
However, we can beat the random guess if $\beta>\beta_r$, the \textit{reconstruction regime}. For a detailed study on this concept, we refer to \cite{ekps00} and references therein. 

As mentioned above, we are interested in the \textit{intermediate regime} of $\beta_c<\beta \le \beta_r$, and this is the only case where it is unknown whether $\pi_\TT$ is a factor of IID.

\subsection{A Glauber dynamics approach}\label{subsec:intro:gd}

The first approach that one may come up with to prove Theorem \ref{thm:main} is using the Glauber dynamics, which is a Markov chain {that converges to some Gibbs measure.}
Since the Glauber dynamics can be encoded by IID information assigned at each vertex, we may hope to construct a factor of IID from this Markov chain starting with an IID process. This approach is indeed possible in the uniqueness regime $\tanh \beta \le (d-1)^{-1}$.
In this subsection, we briefly explain why it does not look promising beyond the uniqueness regime. 

To begin with, we give a short description of the Glauber dynamics. Each vertex $v\in \TT$ is assigned with an IID rate-one Poisson process, which defines the update times at $v$. When an update occurs at $v\in \TT$ at time $t$, $\sigma_{t}(v)$ is updated with respect to the Ising measure on $\{v\}$ conditioned on its neighborhood profile $\{\sigma_{t^-}(u) \}_{u\sim v}$. Namely, it becomes $+1$ with probability
\begin{equation*}
\frac{1}{2}\left(1+ \tanh \left(\beta \sum_{u\sim v} \sigma_{t^-}(u) \right) \right),
\end{equation*}
and transitions to $-1$ otherwise. 

To construct the Glauber dynamics that converges to $\pi_\TT$, one can start from the initial condition given by $\sigma_{t=0}(v) = \pm 1$ with probability $\frac{1}{2}$, independently for each vertex $v$. Assume  that we have two instances of Glauber dynamics $\bsigma_t=(\sigma_t(v))_{v\in \TT}$ and $\bsigma_t'=(\sigma_t'(v))_{v\in \TT}$, and further suppose that at some time $t$ we had $\sigma_t(v)=\sigma_t'(v)$ at all $v\in \TT$ except at a single vertex $v_0\in \TT$. Even under an optimal coupling between $\bsigma_t$ and $\bsigma_t'$, the spin at vertex $v \sim v_0$ will be updated differently in the two instances  with probability
\begin{equation*}
\frac{1}{2} \left|\tanh \left(\beta\sum_{u\sim v} \sigma_t(u) \right) - \tanh \left( \beta\sum_{u\sim v} \sigma_t'(u) \right) \right|,
\end{equation*} 
which is equal to $\tanh\beta$ in the worst case. Then, $\tanh\beta >(d-1)^{-1}$ implies that the \textit{disagreement percolation} is not necessarily subcritical. Thus, the function that maps the IID update information of the Glauber dynamics to the Ising model is not measurable, since it has long-range dependence between distant vertices. This infers that in contrast to \cite{sz19}, we may have to look for a different approach rather than relying on a stochastic process that converges to the Ising model.

\section{Factor construction by independent Brownian motions}
 
 In this section, we give the construction of the factor of IID for the Ising model, and prove Theorem \ref{thm:main}.
 
 \subsection{Notations and setup}
 From now on we fix an arbitrary $d$, assuming it being large enough.
 
 We work on the rooted $d$-regular tree $(\TT, \rho)$.
 For any $\rho' \in \TT$ and $R\in\N$, we denote $\TT_R(\rho')$ as the subgraph induced by
 \begin{equation}\label{eq:def:TRrho}
\{v\in \TT: \textnormal{dist}(\rho',v) \le R \}.
\end{equation}
 We also write $\TT_R:=\TT_R(\rho)$ for ease of notations.
 
 For a finite graph $\GG$ with edge set $E(\GG)$, given the external field $\bx=(x(v))_{v\in\GG}$, and inverse temperature $\bbeta = \{\beta_{u,v}\}_{(u,v)\in E(\GG)}$, let $\pi_\GG^{\bbeta,\bx}$ be the measure on $\{\pm 1\}^\GG$, such that
\begin{equation}\label{eq:def:pi:general}
\pi_\GG^{\bbeta,\bx}(\bsigma) = \frac{1}{Z_\GG^{\bbeta,\bx}} \exp \left( \sum_{(u,v)\in E(\GG)} \beta_{u,v}\sigma_u\sigma_v + \sum_{v\in\GG}x(v)\sigma_v\right),
\end{equation}
for any $\bsigma = (\sigma_v)_{v\in \GG}$,
where $Z_\GG^{\bbeta,\bx} := \sum_{\bsigma\in \{\pm 1\}^\GG} \exp \left(\sum_{(u,v)\in E(\GG)} \beta_{u,v}\sigma_u\sigma_v + \sum_{v\in\GG}x(v)\sigma_v\right)$ is the normalizing constant.
For simplicity of notations, we shall usually write $\pi_\GG^\bx, Z_\GG^\bx$ for $\pi_\GG^{\bbeta,\bx}, Z_\GG^{\bbeta,\bx}$.
We shall also omit $\bx$ when $\bx=\boo$, and omit $\GG$ when it is clear which graph we are working on.

For any measure $\mu$ defined on the probability space $\{\pm 1\}^{\GG}$, and two measurable functions $f, g: \{\pm 1\}^{\GG} \to \mathbb{R}$, we write
\[
\iprod{f}_\mu := \sum_{\bsigma \in \{\pm 1\}^{\GG}} f(\bsigma) \mu( \bsigma),\quad
\cov{f}{g}_\mu := \iprod{fg}_\mu - \iprod{f}_\mu \iprod{g}_\mu.
 \]
For simplicity of notations we also write $\iprod{\cdot}_R^\bx$ for $\iprod{\cdot}_{\TT_R^\bx}$ and $\cov{\cdot}{\cdot}_R^\bx$ for $\cov{\cdot}{\cdot}_{\TT_R^\bx}$.
 We denote $\pi_\TT$ as the free boundary Gibbs measure on $\TT$ (i.e. the weak limit of $\pi_{\TT_R}$ as $R\to\infty$).

\subsection{Construction via finite systems of SDEs}
In this subsection we take constant $\bbeta$, i.e. $\beta_{u,v}=\beta$ for any edge $(u, v)$, and
we give the construction for the factor of IID of $\pi_\TT$.
Take independent one-dimensional Brownian motions $\bW = \{(W_t(v))_{t\ge 0} \}_{v\in \TT}$, which is an i.i.d. process on $\TT$.
We shall construct $\pi_\TT$ as a function of $\bW$, invariant under automorphisms of $\TT$.

 We start from the finite tree $(\TT_R, \rho)$, the depth-$R$ subtree rooted at $\rho$, for $R>0$. 
 Define the function $\bF^R:\R^{\TT_R}\to \R^{\TT_R}$, where for any $v \in \TT_R$ and $\bx\in \R^{\TT_R}$, we let $F_v^R(\bx):=\iprod{\sigma_v}_R^{\bx}$.
 We also let $\bX^R=(\bX^R_t)_{t\ge 0}=\{(X_t^R(v))_{t\ge 0}\}_{v\in\TT_R}$ be the strong solution of the following finite dimensional stochastic differential equation system on $C([0,\infty); \R)^{\TT_R}$:
 \begin{equation}\label{eq:def:Xt:basi c}
 d\bX^R_t = \bF^R(\bX^R_t) dt + d\bW_t^R, \quad \bX_0 = \boo,
 \end{equation}
 where $\bW^R$ denotes the restriction of $\bW$ to $\TT_R$.
 We note that for any $u, v \in\TT_R$, $\partial_{x_u}F_v^R(\bx) =  \cov{\sigma_u}{\sigma_v}_R^{\bx}$, and its absolute value is always bounded by $1$.
 This implies that $F_v^R$ is 1-Lipschitz in each coordinate. Thus, the strong solution of this system of SDEs exists and is unique.
 
 We claim that $\bX^R$ has the same law as the following process. 
 Let $\btau=(\tau_v)_{v\in \TT}$ be sampled from $\pi_\TT$, and let ${\bB} = \{({B}_t(v) )_{ t\ge 0}\}_{v\in \TT}$ be another collection of independent one-dimensional standard Brownian motions. Consider the stochastic process $\overline{\bX}=(\overline{\bX}_t)_{t\ge 0} =\{ (\overline{X}_t(v))_{t\ge 0}\}_{v\in \TT}$ defined as
  \begin{equation}\label{eq:def:Xt:basic:with tau}
  \overline{\bX}_t:= t\btau + \bB_t.
  \end{equation}
 Let $\overline{\bX}^R$ be the restriction of $\overline{\bX}$ on $\TT_R$.
 We have that $\overline{\bX}^R$ has the same law as $\bX^R$, by the following lemma.
 \begin{lemma}
 The process $\overline{\bX}^R$ is a weak solution to \eqref{eq:def:Xt:basi c}.
 \end{lemma}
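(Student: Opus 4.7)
I would prove the lemma by a standard nonlinear filtering argument. Define the candidate innovation process $\widetilde{\bW}^R = \{(\widetilde{W}^R_t(v))_{t \ge 0}\}_{v \in \TT_R}$ by
\[
\widetilde{W}^R_t(v) \,:=\, \overline{X}^R_t(v) - \int_0^t F_v^R(\overline{\bX}^R_s)\,ds \,=\, B_t(v) + \int_0^t \bigl(\tau_v - F_v^R(\overline{\bX}^R_s)\bigr)\,ds,
\]
so that $d\overline{\bX}^R_t = \bF^R(\overline{\bX}^R_t)\,dt + d\widetilde{\bW}^R_t$ with $\overline{\bX}^R_0 = \boo$ holds by construction. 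The task then reduces to verifying that $\widetilde{\bW}^R$ is a $|\TT_R|$-dimensional standard Brownian motion with respect to the filtration $\FF_t := \sigma(\overline{\bX}^R_s : s \le t)$. Continuity is immediate, and since the drift is of bounded variation while $\{B(v)\}_{v \in \TT_R}$ are independent standard Brownian motions, the quadratic covariation matrix of $\widetilde{\bW}^R$ is $I\cdot t$. By L\'evy's characterization, the only content is to check the martingale property.

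The key identity driving everything is
\[
\E\bigl[\tau_v \,\big|\, \FF_t\bigr] \,=\, F_v^R(\overline{\bX}^R_t), \quad v \in \TT_R,
\]
since the tower property then instantly yields $\E[\widetilde{W}^R_t(v) - \widetilde{W}^R_s(v) \mid \FF_s] = 0$ for $s \le t$. I would prove this identity in two steps. First, by Girsanov's theorem, the conditional law of the path $(\overline{\bX}^R_s)_{s \le t}$ given $\btau^R$ has density
\[
\exp\!\bigl(\textstyle\sum_{v \in \TT_R} \tau_v \overline{X}^R_t(v) - \tfrac{t}{2}\sum_{v \in \TT_R} \tau_v^2\bigr)
\]
with respect to Wiener measure, and since $\tau_v^2 = 1$ this likelihood depends on the path only through $\overline{\bX}^R_t$. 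Hence $\overline{\bX}^R_t$ is a sufficient statistic for $\btau^R$ and $\E[\tau_v \mid \FF_t] = \E[\tau_v \mid \overline{\bX}^R_t]$.

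Second, I would identify the prior: although $\btau$ is sampled from $\pi_\TT$ on the whole infinite tree, its marginal on $\TT_R$ coincides with $\pi_{\TT_R}$. This follows from the tree Markov property together with the global $\pm$-spin-flip symmetry of each (infinite) subtree hanging below a leaf of $\TT_R$, which forces the associated partition function to be independent of the leaf spin and hence contributes no effective field on $\TT_R$. Bayes' rule then gives
\[
P\bigl(\btau^R = \bt \,\big|\, \overline{\bX}^R_t = \bx\bigr) \,\propto\, \pi_{\TT_R}(\bt)\,\exp\!\bigl(\textstyle\sum_v t_v x(v)\bigr) \,=\, \pi_{\TT_R}^{\bx}(\bt),
\]
so $\E[\tau_v \mid \overline{\bX}^R_t = \bx] = \iprod{\sigma_v}_R^{\bx} = F_v^R(\bx)$, as required. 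The conceptual heart — and the only nontrivial part — is this two-step posterior computation, combining the Girsanov sufficiency reduction (compressing the whole observed path down to its endpoint $\overline{\bX}^R_t$) with the restriction property of the free-boundary Gibbs measure on a tree. Once the conditional-expectation identity is in hand, the martingale property and L\'evy's characterization close the argument and exhibit $(\overline{\bX}^R,\widetilde{\bW}^R)$ as a weak solution to \eqref{eq:def:Xt:basi c}.
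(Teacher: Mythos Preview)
Your proof is correct and follows essentially the same approach as the paper: both arguments hinge on the Girsanov/Bayes computation showing that the posterior of $\btau^R$ given $\FF_t$ is the Ising model on $\TT_R$ with external field $\overline{\bX}^R_t$, whence $\E[\tau_v\mid\FF_t]=F_v^R(\overline{\bX}^R_t)$. The only cosmetic difference is that the paper cites the innovations representation from \cite[Section 7.4]{liptser01} for the fact that $\overline{\bX}^R$ weakly solves $d\overline{X}^R_t(v)=\E[\tau_v\mid\FF_t]\,dt+dW_t(v)$, whereas you unpack this directly via L\'evy's characterization; your added remark on why the $\TT_R$-marginal of $\pi_\TT$ equals $\pi_{\TT_R}$ is also correct and is used (implicitly) in the paper.
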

 \begin{proof}
 For any $t\ge 0$, let $\FF_t$ be the $\sigma$-algebra generated by $(\overline{\bX}^R_s)_{0\le s\le t}$.
 Then $\overline{\bX}^R$ is a weak solution to the following system of SDEs (for details see e.g., \cite[Section 7.4]{liptser01})
 \[
 d\overline{X}^R_t(v) = \E[\tau_v|\FF_t] dt + dW_t(v),\; \forall v\in \TT_R, \quad \overline{\bX}^R_0 = \boo.
 \]
 It remains to compute $\E[\tau_v|\FF_t]$.
 Denote $\btau^R$ as the restriction of $\btau$ on $\TT_R$.
 For any $\FF_t$ measurable set $A$, and 
 $\bsigma = (\sigma_v)_{v\in \TT_R} \in \{\pm 1\}^{\TT_R}$,
\[
\begin{split}
  \P\big( \btau^R = \bsigma,\ (\overline{\bX}_s^R)_{0\le s \le t} \in A \big) 
  =&
  \pi_{\TT_R}(\bsigma) \P \big( (\bB_s + s\bsigma)_{0\le s \le t} \in A \big)\\
  =&
  \pi_{\TT_R}(\bsigma) \frac{\P \big( (\bB_s + s\bsigma)_{0\le s \le t} \in A \big)}{\P \big( (\bB_s)_{0\le s \le t} \in A \big)}\P \big( (\bB_s)_{0\le s \le t} \in A \big).
\end{split}
\]
Thus by Girsanov theorem, we have
\[
\P\big( \btau^R = \bsigma | \FF_t \big)
=  \pi_{\TT_R}(\bsigma)
  \prod_{v\in\TT_R}\exp\left( \overline{X}_t(v)\sigma_v - \frac{t}{2} \right) f((\overline{\bX}^R_s)_{0\le s \le t})
\]
where $f$ is the Radon-Nikodym derivative of the law of $(\bB^R_s)_{0\le s \le t}$ over the law of $(\overline{\bX}^R_s)_{0\le s \le t}$.
Thus we have that conditional on $\FF_t$, the law of $\btau^R$ is given by the Ising model with external field $\overline{\bX}_t^R$;
then for any $v\in\TT_R$ we have $\E[\tau_v|\FF_t] = \iprod{\sigma_v}_R^{\overline{\bX}_t^R} = F_v^R(\overline{\bX}_t^R)$, and the conclusion follows.
 \end{proof}

Given this lemma, our general strategy is to show almost sure convergence of $\bX^R$ as $R\to \infty$;
and the limit is the same if one starts from a different root $\rho'$.
The limit would have the same law as $\overline{\bX}$, from which we could recover the Ising model by taking $\lim_{t\to\infty} \textnormal{sign}(\overline{\bX}_t)$.

To get the desired convergence, our key step bounds the difference between $\bX^R$ and $\bX^{R-1}$, as follows.
\begin{proposition}\label{prop:L2cauchy}
There exist an absolute constant  $c>0$,  a constant $C_d>0$ depending only on $d$, and $\alpha=\alpha(d,\beta)\in(0,1)$, such that when $\beta\ge 0$ and $ \tanh\beta \le c(d-1)^{-\frac{1}{2}} $, we have
\begin{equation}\label{eq:L2cauchy:thm}
  \E \left[ \left(X_t^R(u) - X_t^{R-1}(u) \right)^2 \right] \le C_d e^{C_d t} \alpha^{R-\textnormal{dist}(\rho,u)-3} (d-1)^{\textnormal{dist}(\rho, u)},
\end{equation}
	for all $t,R>0$ and $u\in \TT_{R-1}$.
\end{proposition}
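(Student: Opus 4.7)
The proof compares the two SDE systems driven by the common Brownian family $\bW$. Since $\bX^R$ and $\bX^{R-1}$ share this driver on $\TT_{R-1}$, the noise terms cancel in $D_t(v) := X^R_t(v) - X^{R-1}_t(v)$, so
\[
D_t(v) = \int_0^t \bigl(F^R_v(\bX^R_s) - F^{R-1}_v(\bX^{R-1}_s)\bigr)\,ds,\qquad v\in\TT_{R-1}.
\]
I would first place both drifts on a common domain. Summing out any leaf of $\TT_R$ with zero external field reproduces the Ising measure on $\TT_{R-1}$, because $\sum_{\sigma=\pm 1} e^{\beta\sigma s} = 2\cosh\beta$ is independent of $s\in\{\pm 1\}$. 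Hence $F^{R-1}_v(\bx) = F^R_v(\overline{\bx})$, where $\overline{\bx}$ extends $\bx$ by $0$ to the leaves $\partial\TT_R := \TT_R\setminus\TT_{R-1}$. The fundamental theorem of calculus together with the identity $\partial_{x_w}F^R_v(\bx) = \cov{\sigma_v}{\sigma_w}_R^\bx$ then yields
\[
F^R_v(\bX^R_s) - F^R_v(\overline{\bX^{R-1}_s}) = \sum_{w\in\TT_R}\tilde c_{v,w}(s)\,\Delta_s(w),
\]
where $\tilde c_{v,w}(s)$ is this covariance averaged over the linear interpolant, $\Delta_s = D_s$ on $\TT_{R-1}$, and $\Delta_s = X^R_s$ on $\partial\TT_R$.

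The central analytic input I need is a uniform correlation decay $|\cov{\sigma_v}{\sigma_w}_R^\bx| \le \theta^{\textnormal{dist}(v,w)}$ (with $\theta := \tanh\beta$), valid for every external field $\bx$. On a tree this follows by iterating the one-edge transfer identity---conditioning on the spin at any intermediate vertex on the path decouples the two subtrees---combined with the elementary estimate $\tfrac{1}{2}|\tanh(\beta+h)-\tanh(h-\beta)| = \tfrac{\sinh 2\beta}{\cosh 2\beta+\cosh 2h}\le\tanh\beta$ for every $h\in\R$.

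Next I would square the integral identity, apply Cauchy--Schwarz in time, and split the spatial sum into an interior part (over $\TT_{R-1}$) and a boundary source (over $\partial\TT_R$). Using $|X^R_s(w)|\le s+|W_s(w)|$ (which follows from $|F^R|\le 1$), the correlation bound, and the enumeration of leaves of $\TT_R$ at distance $R-k+2j$ from $v$ for $j=0,\dots,k$ (each class of size $\lesssim(d-1)^{R-k+j}$), the boundary source satisfies
\[
\E\Bigl[\Bigl(\sum_{w\in\partial\TT_R}\tilde c_{v,w}(s)\,X^R_s(w)\Bigr)^2\Bigr]\le C_d(1+s^2)\,\alpha^{R-k}(d-1)^k,
\]
where $k=\textnormal{dist}(\rho,v)$ and $\alpha := (d-1)\theta^2 \le c^2 < 1$. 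The geometric series converges precisely because $(d-1)\theta^2<1$, and the factor $(d-1)^k$ arises from leaves reachable by first ascending through one of the $k$ ancestors of $v$. For the interior part, a weighted Cauchy--Schwarz pairing $\theta^{2\textnormal{dist}(v,\cdot)}$ against the branching weight $(d-1)^{\textnormal{dist}(\rho,\cdot)}$ produces a Gronwall-type coupling of $\E[D_t(v)^2]$ to $\{\E[D_s(w)^2]\}_{w\in\TT_{R-1}}$.

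To close, I would insert the ansatz $\E[D_t(v)^2]\le C_d e^{C_d t}\alpha^{R-k-3}(d-1)^k$ into the right-hand side and verify it is preserved: the source matches by construction (with the factor $\alpha^{-3}$ absorbing the polynomial-in-$t$ prefactor), while the interior coupling yields the exponential factor $e^{C_d t}$ via Gronwall's lemma. The main technical obstacle is the choice of weights in the spatial Cauchy--Schwarz step: the naive $\ell^2$ split $(\sum a_w b_w)^2\le(\sum a_w^2)(\sum b_w^2)$ is too lossy---it would require $(d-1)\theta<1$, outside our regime---so one must weight the sum so the resulting geometric series converges at exactly the critical rate $(d-1)\theta^2<1$, while ensuring the accumulated $(d-1)^{\textnormal{dist}(\rho,\cdot)}$ factor from propagation through the tree does not exceed the target $(d-1)^k$.
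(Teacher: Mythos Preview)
Your approach is genuinely different from the paper's, and the gap you identify at the end is real and, I believe, fatal for this route. The paper does \emph{not} compare $\bX^R$ and $\bX^{R-1}$ directly via a Gronwall argument; instead it interpolates the inverse temperature on the boundary edges $\partial E_R$ from $0$ to $\beta$, obtaining a family $\bY^\gamma$ with $\bY^0|_{\TT_{R-1}}=\bX^{R-1}$ and $\bY^\beta=\bX^R$, then writes $X_t^R(u)-X_t^{R-1}(u)=\int_0^\beta H_t^\gamma(u)\,d\gamma$ with $\bH^\gamma:=\partial_\gamma\bY^\gamma$ and bounds $\E[H_t^\gamma(u)^2]$ (Proposition~2.5).

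The reason your Gronwall strategy cannot close beyond the uniqueness threshold is that it uses only the \emph{pointwise} bound $|\cov{\sigma_v}{\sigma_w}_R^\bx|\le\theta^{\bd(v,w)}$. Any weighted Cauchy--Schwarz of the form $(\sum_w a_w b_w)^2\le(\sum_w p_w a_w^2)(\sum_w p_w^{-1}b_w^2)$ with $a_w=\tilde c_{v,w}$ requires $\sum_w p_w\theta^{2\bd(v,w)}<\infty$, forcing $p_w\lesssim\eta^{\bd(v,w)}$ with $(d-1)\eta\theta^2<1$; but then inserting the ansatz $\E[D_s(w)^2]\asymp\alpha^{R-k_w}(d-1)^{k_w}$ into $\sum_w p_w^{-1}\E[D_s(w)^2]$ produces, on descendants of $v$ at distance $\ell$, a factor $\bigl((d-1)^2/(\eta\alpha)\bigr)^\ell$, whose convergence would force $\eta>(d-1)^2/\alpha$, incompatible with the first constraint unless $(d-1)^2<1$. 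No choice of weights reconciles the two sums.

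What the paper exploits, and what your outline lacks, is the \emph{law} of the random external field: since $\bY_t^\gamma\overset{d}{=}t\btau+\bB_t$ with $\btau$ Ising-distributed, expectations of products of covariances $\cov{\sigma_{u_i}}{\sigma_{u_{i-1}}}_{\pi^{\bY_{t_i}}}$ carry additional decay beyond the pointwise $\theta^{\bd}$ bound. Concretely, after expanding $\bH^\gamma$ as a Dyson-type series, the paper proves (Proposition~2.7) a bound of the form
\[
\E\Bigl[\cov{\sigma_{u_-}\sigma_{u_-'}}{\sigma_{u_0}}\cov{\sigma_{u_+}\sigma_{u_+'}}{\sigma_{u_k}}\prod_i\cov{\sigma_{u_i}}{\sigma_{u_{i-1}}}\Bigr]
\le C^k(C\theta)^{\bd(u_-,u_+)+\bd(u_-,u_0)+\bd(u_+,u_k)+\bd(u_{0:k})}e^{Ct},
\]
and the extra factor $\theta^{\bd(u_-,u_+)}$ --- which has no pointwise analogue --- is precisely what makes the subsequent sums converge under $(d-1)\theta^2<c$ rather than $(d-1)\theta<1$. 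Obtaining that factor requires an elaborate inductive coupling argument (Section~3), not any refinement of Cauchy--Schwarz.
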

Now we study the equations starting from a different root.
Take any $\rho'\in\TT$.
We define the function $\bF^{R,\rho'}:\R^{\TT_R(\rho')}\to \R^{\TT_R(\rho')}$, where for any $v \in \TT_R(\rho')$ and $\bx\in \R^{\TT_R(\rho')}$, we let $F_v^{R,\rho'}(\bx):=\iprod{\sigma_v}_{\pi_{\TT_R(\rho')}^{\bx}}$.
 We also let $\bX^{R,\rho'}=(\bX^{R,\rho'}_t)_{t\ge 0}=\{(X_t^{R,\rho'}(v))_{t\ge 0}\}_{v\in\TT_R}$ be the strong solution of the following finite dimensional stochastic differential equation system on $C([0,\infty); \R)^{\TT_R(\rho')}$:
 \begin{equation}\label{eq:def:X:differnt root}
 d\bX^{R,\rho'}_t = \bF^{R,\rho'}(\bX^{R,\rho'}_t) dt + d\bW_t^{R,\rho'}, \quad \bX_0 = \boo,
 \end{equation}
 where $\bW^{R,\rho'}$ denotes the restriction of $\bW$ to $\TT_R(\rho')$.

\begin{proposition}\label{prop:L2 cauchy2}
Let $c, C_d, \alpha, \beta$ be as in Proposition \ref{prop:L2cauchy}, and $\rho'$ be a neighbor of the root $\rho$.
For any $t,R>0$ and $u\in \TT_R(\rho) \cap \TT_{R}(\rho')$, we have
	\[
	\mathbb{E} \left[ \left(X_t^{R}(u)-X_t^{R,\rho'}(u) \right)^2 \right]\le  C_d e^{C_dt}  \alpha^{R-\bd(\rho,u)-4} (d-1)^{\bd(\rho,u)}.
	\]
\end{proposition}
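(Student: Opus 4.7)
The plan is to bridge $\bX^R$ and $\bX^{R,\rho'}$ through an intermediate SDE $\bX^{R+1}$ on the larger tree $\TT_{R+1}(\rho)$. The key geometric input is that because $\rho'\sim\rho$, the triangle inequality yields $\TT_R(\rho')\subseteq\TT_{R+1}(\rho)$, so $\TT_{R+1}(\rho)$ is a common enlargement of both $\TT_R(\rho)=\TT_R$ and $\TT_R(\rho')$. I will then split, via $(a+b)^2\le 2(a^2+b^2)$,
\[
  \E\bigl[(X_t^R(u)-X_t^{R,\rho'}(u))^2\bigr]\le 2\,\E\bigl[(X_t^{R+1}(u)-X_t^R(u))^2\bigr]+2\,\E\bigl[(X_t^{R+1}(u)-X_t^{R,\rho'}(u))^2\bigr]
\]
and control the two pieces separately.

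The first term is a direct application of Proposition \ref{prop:L2cauchy} with $R$ replaced by $R+1$, giving the bound $C_de^{C_dt}\alpha^{R-\bd(\rho,u)-2}(d-1)^{\bd(\rho,u)}$, which is smaller than the target by a factor $\alpha^2$. For the second term I re-run the proof of Proposition \ref{prop:L2cauchy} from the vertex $\rho'$ instead of $\rho$. Writing $T_\rho$ for the component of $\TT\setminus\{\rho\rho'\}$ containing $\rho$, the ``extra'' boundary added by passing from $\TT_R(\rho')$ to $\TT_{R+1}(\rho)$ is
\[
  \TT_{R+1}(\rho)\setminus\TT_R(\rho')=\{v\in T_\rho:\bd(\rho,v)\in\{R,R+1\}\},
\]
which sits at distance $R+1$ or $R+2$ from $\rho'$---at most one level further out than the boundary of the ball $\TT_R(\rho')$. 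Since the contraction-and-Gronwall mechanism in Proposition \ref{prop:L2cauchy} depends only on the number and graph-distance-to-$u$ profile of the boundary vertices, the same derivation produces
\[
  \E\bigl[(X_t^{R+1}(u)-X_t^{R,\rho'}(u))^2\bigr]\le C_de^{C_dt}\alpha^{R+1-\bd(\rho',u)-3}(d-1)^{\bd(\rho',u)}.
\]

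Because $|\bd(\rho,u)-\bd(\rho',u)|=1$, in the worst case $\bd(\rho',u)=\bd(\rho,u)+1$ the right-hand side equals $\alpha(d-1)\cdot C_de^{C_dt}\alpha^{R-\bd(\rho,u)-4}(d-1)^{\bd(\rho,u)}$, and the factor $\alpha(d-1)$ is absorbed into a redefined constant $C_d$. This matches the target exponent, and combining with the first piece finishes the proof.

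The main obstacle is the second step, where the pair $(\TT_{R+1}(\rho),\TT_R(\rho'))$ is not concentric, so Proposition \ref{prop:L2cauchy} does not apply verbatim. However, the decay mechanism in that proof---Lipschitz control of the Ising drift $\bF^\bx$ under $\tanh\beta\le c(d-1)^{-1/2}$, combined with a Gronwall iteration along tree distance---is purely local and isotropic; the only thing the asymmetric geometry affects is the combinatorial counting of boundary vertices, which changes only by $d$-dependent constants that can be absorbed into $C_d$.
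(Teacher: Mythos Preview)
Your bridging strategy is natural, and the paper also routes through a common enlargement---but a different one. Instead of $\bX^{R+1}$ on $\TT_{R+1}(\rho)$, the paper interpolates to the process $\bY^{\beta,\rho'}$ on the union $\TT_R^\sharp:=\TT_R(\rho)\cup\TT_R(\rho')$. This choice is symmetric: the comparison $\bX^R\leftrightarrow\bY^{\beta,\rho'}$ interpolates over a single layer of edges on the $\rho'$-side, and swapping $\rho\leftrightarrow\rho'$ handles $\bX^{R,\rho'}\leftrightarrow\bY^{\beta,\rho'}$ identically. In either direction every interpolated edge is leaf-adjacent in $\TT_R^\sharp$, so the three-point covariance formula of Lemma~\ref{L:cov-formula} (which explicitly requires $u'\in\partial\TT_R$) applies as written, and Corollary~\ref{cor:Ybd L2:different center} genuinely is the same argument as Proposition~\ref{prop:Ybd L2} with a smaller boundary.

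Your second step does not share this feature. The edge set $E(\TT_{R+1}(\rho))\setminus E(\TT_R(\rho'))$ occupies \emph{two} levels on the $T_\rho$ side, and the inner layer (from $\rho$-depth $R{-}1$ to $R$) joins vertices neither of which is a leaf of $\TT_{R+1}(\rho)$. This breaks formula~\eqref{eq:cov:threepts:nonendptcase}, and with it the expansion in Section~\ref{subsec:ind-coup:covchain}; an analogue of Proposition~\ref{prop:chain-expect} for non-leaf boundary edges would have to be proved anew. More fundamentally, your description of the mechanism as ``Lipschitz control of the drift plus a Gronwall iteration along tree distance'' is not what the paper does: a naive Gronwall argument on the SDE system produces no spatial decay $\alpha^{R-\bd(\rho,u)}$ whatsoever. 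The actual engine behind Proposition~\ref{prop:L2cauchy} is the chain-of-covariances estimate (Proposition~\ref{prop:chain-expect}) and the inductive coupling of Section~\ref{sec:inductivecoup}, which is far from a soft local argument. The handwave in your last paragraph therefore does not close the gap; routing through the symmetric intermediate $\TT_R^\sharp$ avoids it entirely.
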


We now deduce Theorem \ref{thm:main} from Proposition \ref{prop:L2cauchy} and \ref{prop:L2 cauchy2}.

\begin{proof}[Proof of Theorem \ref{thm:main}]
Let $c$ be the same as in Proposition \ref{prop:L2cauchy} and \ref{prop:L2 cauchy2}.
By Proposition \ref{prop:L2cauchy} we have that for any $t\ge 0$ and $u\in\TT$, 
\[
\E\left[\sum_{R=1}^\infty \left| X_t^R(u) - X_t^{R-1}(u)\right| \right] < \infty.
\]
This means that as $R\to \infty$, $X_t^R(u)$ converges almost surely.
For each $t\in\Q_{\ge 0}$ we define $X_t(u)$ as the limit.
Then the process $(\bX_t)_{t\in\Q_{\ge 0}} = \{(X_t(v))_{t\in\Q_{\ge 0}}\}_{v\in\TT}$ is defined for a.s.~$\bW$, and its finite dimensional distribution is the same as that of $(\overline{\bX}_t)_{t\in\Q_{\ge 0}}$.

For $n\in\N$ we define $\bG_n=\{G_n(u)\}_{u\in\TT}$ as
$G_n(u) := \textnormal{sign}(X_{2^n}(u))$.
Then as $n\to\infty$, $\bG_n$ converges in law to $\pi_\TT$.
We now show that $\bG_n$ converges almost surely.
Indeed, we have
\[
\begin{split}
\P\left( G_n(u) \neq G_{n+1}(u) \right)
= &
\P\left( \textnormal{sign}(\overline{X}_{2^n}(u)) \neq \textnormal{sign}(\overline{X}_{2^{n+1}}(u)) \right)\\
\le &
\P\left(|B_{2^n}(u)| > 2^{n-1}\right)
+
\P\left(|B_{2^{n+1}}(u)- B_{2^n}(u)| > 2^{n-1}\right).
\end{split}
\]
Thus, we can see that  $\P(G_n(u) \neq G_{n+1}(u))$ is summable in $n$, which by Borel-Cantelli lemma implies that $G_n(u)$ converges a.s., as $n\to\infty$.
We denote the limit by $\bG \in \R^{\TT}$, which is a measurable function of $\bW$, defined for a.s. $\bW$, and the law of $\bG$ is $\pi_\TT$.

Finally we show that this function $\bW \mapsto \bG$ is invariant under $\aut(\TT)$.
From our construction of $\bG$ it suffices to prove the following:
take any $\rho' \in \TT$, then as $R\to\infty$, $X_t^{R,\rho'}(u)$ a.s. converges to $X_t(u)$, for any $t\in \Q_{\ge 0}$ and $u\in \TT$.
Indeed, $X_t^{R,\rho'}(u)$ a.s. converges, since the function $\bW \mapsto X_t^{R,\rho'}(u)$ is the push-forward of  $\bW \mapsto X_t^{R}(u)$ under an action in $\aut(\TT)$.
Denote this limit as $X_t^{\rho'}(u)$. Let $\rho_0, \rho_1, \cdots, \rho_k$ be the path from $\rho_0 = \rho$ and $\rho_k = \rho'$. Applying Proposition \ref{prop:L2 cauchy2} to the consecutive pairs $(\rho_i,\rho_{i+1})$  and sending $R\to\infty$, we have $\E\left[\left( X_t(u) - X_t^{\rho'}(u) \right)^2\right] = 0$;
then $X_t(u) = X_t^{\rho'}(u)$ almost surely, and our conclusion follows.
\end{proof}

 \subsection{An interpolation approach}\label{subsec:formulation:induction on R}
 In this subsection, we develop a framework to analyze the difference between $\bX^{R-1}$ and $\bX^{R}$, the unique strong solutions of \eqref{eq:def:Xt:basi c}. As a result, we reduce Propositions \ref{prop:L2cauchy} and \ref{prop:L2 cauchy2} into more tractable forms. From now on, denote $E_R:=E(\TT_R)$, and let $\partial E_R$ denote the boundary edges of $\TT_R$, i.e.
 \[
 \partial E_R : = E_R\setminus E_{R-1}= \{(u,v)\in E(\TT_R): v\in \partial \TT_R  \},
 \]
 where $\partial \TT_R := \{v\in \TT_R : \textnormal{dist}(\rho,v)=R \}$.

The Ising model on $\TT_{R-1}$ can be viewed as the Ising model on $\TT_{R}$, by setting $\beta_{u,v}=0$ for $(u,v)\in\partial E_R$. Thus, our approach to compare $\bX^{R-1}$ and $\bX^{R}$ is by interpolating the inverse temperature:  we investigate the Ising model on $\TT_R$ with inverse temperature $\bbeta^\gamma:=\{\beta^\gamma_{u,v}\}_{(u,v)\in E_R}$, where $\beta^\gamma_{u,v}=\beta$ for $(u,v)\in E_{R-1}$ and $\beta^\gamma_{u,v}=\gamma$ for $(u,v)\in \partial E_R$, for some $\gamma \in[0,\beta]$.
 Formally, for $\bx = (x(v))_{v\in \TT_R} \in \R^{\TT_R}$, we define the probability measure ${\pi}^{\gamma,\bx}:={\pi}^{\bbeta^\gamma,\bx}_{\TT_R}$.
 Define the function $\bF^{\gamma}:\R^{\TT_R}\to \R^{\TT_R}$, where for any $v \in \TT_R$ and $\bx\in \R^{\TT_R}$, we let $F_v^\gamma(\bx):=\iprod{\sigma_v}_{\pi^{\gamma,\bx}}$.
 We also let $\bY^\gamma=(\bY^\gamma_t)_{t\ge 0}=\{(Y_t^\gamma(v))_{t\ge 0}\}_{v\in\TT_R}$ be the strong solution of the following finite dimensional stochastic differential equation system on $C([0,\infty); \R)^{\TT_R}$:
 \begin{equation}\label{eq:def:tildeXt:SDE}
 d\bY^\gamma_t = \bF^\gamma(\bY^\gamma_t) dt + d\bW_t^R, \quad \bY^\gamma_0 = \boo,
 \end{equation}
 where as before $\bW^R$ denotes the restriction of $\bW$ to $\TT_R$, and are the driving Brownian motions.
In particular, we have $\bY^0 = \bX^{R-1}$ on $\TT_{R-1}$, and $\bY^\beta = \bX^R $. 
As for $\bX^R$, the law of $\bY^\gamma=(\bY^\gamma_t)_{t\ge 0}$ is the same as that of $(t\btau + \bB_t)_{t\ge 0}$, where $\btau=(\tau_v)_{v\in \TT_R}$ is sampled from $\pi^{\gamma,\boo}$, and ${\bB} = \{({B}_v(t) )_{ t\ge 0}\}_{v\in \TT_R}$ is another collection of independent one-dimensional standard Brownian motions.

Note that $\bY^\gamma$ for all $\gamma\in[0,\beta]$ are generated by the same driving Brownian motions $\bW^R_t$, thus if we define $\bH^\gamma= (\bH^\gamma_t)_{t\ge 0} = \{(H^\gamma_t(v))_{t\ge 0} \}_{v\in \TT_R}$ to be
\begin{equation}\label{eq:def:Hgamma}
\bH^\gamma_t := \partial_\gamma \bY^\gamma_t,
\end{equation}
then from \eqref{eq:def:tildeXt:SDE}, we can deduce that 
 	\begin{equation}\label{eq:def:Yt}
 \frac{d \bH_t^\gamma}{dt}	= \partial_\gamma  \left\{ \bF^\gamma(\bY^\gamma_t)  \right\} =
 \nabla \bF^\gamma(\bY^\gamma_t) \bH_t^\gamma + \partial_\gamma \bF^\gamma \left(\bY_t^\gamma \right) ,
 	\end{equation}
and $\bH_0^\gamma =\boo$. We have the following estimate on $\bH^\gamma$.
	\begin{proposition}\label{prop:Ybd L2}
		 Under the above setting, there exist absolute constants $c>0$, and $C_d>0$ depending only on $d$, and $\alpha=\alpha(d,\beta)\in(0,1)$, such that if $d>c^{-1}$, $\tanh\beta \le c(d-1)^{-\frac{1}{2}}$, and $0\le \gamma \le \beta$, we have
		 \begin{equation*} \E\left[H_t^\gamma(u)^2 \right] \le C_de^{C_dt} \alpha^{R-\textnormal{dist}(\rho,u)-3}(d-1)^{\bd(\rho,u)}
		 \end{equation*}
		 for all $t>0$, $u\in \TT_R$. 
	\end{proposition}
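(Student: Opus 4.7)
The plan is to treat \eqref{eq:def:Yt} as a random linear inhomogeneous ODE on $\R^{\TT_R}$ and combine an $L^2$ bound on the source with a Gronwall argument in a geometry-adapted norm. Setting $M(t):=\nabla\bF^\gamma(\bY_t^\gamma)$ and $S(t):=\partial_\gamma\bF^\gamma(\bY_t^\gamma)$, unwinding the definitions gives
\begin{equation*}
M_{vu}(t)=\cov{\sigma_u}{\sigma_v}_{\pi^{\gamma,\bY_t^\gamma}},\qquad S_v(t)=\sum_{(u',v')\in\partial E_R}\cov{\sigma_v}{\sigma_{u'}\sigma_{v'}}_{\pi^{\gamma,\bY_t^\gamma}},
\end{equation*}
so the equation reads $\dot\bH_t^\gamma=M(t)\bH_t^\gamma+S(t)$ with $\bH_0^\gamma=\boo$. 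Everything that follows is driven by (possibly external-field) Ising correlation bounds on the finite tree.

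The first main step is an $L^2$ estimate on the source. Each $S_v(t)$ is a sum of three-point Ising covariances, each indexed by a boundary edge $(u',v')$ at tree distance at least $R-\bd(\rho,v)$ from $v$. Using a standard exponential correlation-decay bound for the tree Ising model with bounded inhomogeneous external field (essentially $|\cov{\sigma_v}{\sigma_w}_{\pi^{\gamma,\bx}}|\lesssim\theta^{\bd(v,w)}$ with $\theta=\tanh\beta$, and its three-point analogue obtained by adding and subtracting a product), squaring, and summing over the $\approx(d-1)^{R-\bd(\rho,v)}$ boundary edges at each relevant distance, one obtains
\begin{equation*}
\E[S_v(t)^2]\le C_d\,\alpha^{R-\bd(\rho,v)}
\end{equation*}
for some $\alpha=\alpha(d,\beta)<1$. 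This is precisely where the hypothesis $\theta\le c(d-1)^{-1/2}$ enters in an essential way: the per-depth factor $(d-1)\theta^2\le c^2<1$ is subcritical, which is exactly what is needed to defeat the $(d-1)^R$ boundary size in an $L^2$ sense.

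The second main step is to propagate this source estimate to a vertex-wise second moment at $u$. Differentiating $\E[H_t^\gamma(u)^2]$ and applying Cauchy--Schwarz together with a pointwise bound of the form $|M_{uv}(t)|\lesssim\theta^{\bd(u,v)}$, the vector $g_u(t):=\sqrt{\E[H_t^\gamma(u)^2]}$ satisfies a linear Gronwall-type inequality
\begin{equation*}
\dot g_u(t)\le\sum_{v\in\TT_R}\theta^{\bd(u,v)}\,g_v(t)+\sqrt{\E[S_u(t)^2]},
\end{equation*}
driven by the symmetric tree kernel $K_{uv}=\theta^{\bd(u,v)}$. The factor $e^{C_dt}$ in the final bound will come from a uniform operator bound on $K$ in an appropriate norm on $\TT_R$.

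The main obstacle, and the technical heart of the argument, is extracting the sharp spatial profile $\alpha^{R-\bd(\rho,u)-3}(d-1)^{\bd(\rho,u)}$ at a fixed vertex $u$. A flat $\ell^2$ estimate is hopeless: in the intermediate regime the row sums of $K$ are already divergent on the infinite tree, and a naive application would collapse all spatial structure. My plan is to close the Gronwall argument in a weighted norm adapted to the tree geometry, for instance $\|x\|_w^2:=\sum_v w_v^{-1}x_v^2$ with $w_v=(d-1)^{\bd(\rho,v)}\alpha^{R-\bd(\rho,v)}$, so that the pointwise inequality $H_t^\gamma(u)^2\le w_u\|\bH_t^\gamma\|_w^2$ automatically carries the correct $(d-1)^{\bd(\rho,u)}\alpha^{R-\bd(\rho,u)}$ profile, while the source bound integrates to a controlled quantity in $\|\cdot\|_w$. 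The hard part is verifying, by a Schur-type calculation, that the kernel $K$ acts boundedly in $\|\cdot\|_w$ uniformly in $R$: the weight growth $(d-1)^{\bd(\rho,v)}$ must be compensated by the geometric decay $\theta^{\bd(u,v)}$, and this balance succeeds exactly when $(d-1)\theta^2<1$, the same subcritical product that drove the source estimate. Any residual polynomial loss in $R$ from the weighted summation is absorbed by a slight worsening of $\alpha$ and the $e^{C_dt}$ factor, yielding the claimed bound after running Gronwall and specializing to $v=u$.
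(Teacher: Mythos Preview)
Your overall strategy---Duhamel/variation of constants, source $S$ given by boundary three-point covariances, propagation via the covariance matrix $M$---matches the paper's setup exactly. But both of the core estimates you propose have genuine gaps that the paper has to work much harder to close.

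For the source bound $\E[S_v(t)^2]\le C_d\,\alpha^{R-\bd(\rho,v)}$: this is not obtainable from pointwise correlation decay. Each summand satisfies $|\cov{\sigma_v}{\sigma_{u'}\sigma_{v'}}_{\pi^{\gamma,\bY_t}}|\lesssim\theta^{\bd(v,u')}$, but there are $\sim(d-1)^{R-\bd(\rho,v)}$ of them, and in the regime $\theta>(d-1)^{-1}$ the sum of the pointwise bounds already diverges. To get $L^2$ control you must show that the cross terms $\E\bigl[\cov{\sigma_v}{\sigma_{u_-}\sigma_{u_-'}}\cov{\sigma_v}{\sigma_{u_+}\sigma_{u_+'}}\bigr]$ carry an \emph{additional} decay factor $(C\theta)^{\bd(u_-,u_+)}$. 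This is not ``standard'': the covariances are nonlinear functions of the random field $\bY_t$, which is itself correlated through $\btau\sim\pi$, and extracting that extra factor is precisely the content of the paper's Proposition~\ref{prop:chain-expect} in the case $k=0$, proved via the inductive coupling of Section~\ref{sec:inductivecoup}.

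For the propagation step: your Schur claim is wrong. With $w_v=(d-1)^{\bd(\rho,v)}\alpha^{R-\bd(\rho,v)}$ and $K_{uv}=\theta^{\bd(u,v)}$, a direct computation of the row sums of $w_u^{-1/2}K_{uv}w_v^{1/2}$ (parametrize $v$ by going up $a$ levels from $u$ and down $b$, contributing $\sim(d-1)^b$ vertices) gives a factor $\sum_b(\theta(d-1)^{3/2}\alpha^{-1/2})^b$, so uniform boundedness in $R$ requires $\theta^2(d-1)^3<\alpha<1$, hence $\theta<(d-1)^{-3/2}$---far stronger than the hypothesis $\theta\le c(d-1)^{-1/2}$. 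The heuristic ``the balance succeeds when $(d-1)\theta^2<1$'' conflates the branching seen from $u$ with the branching already built into the weight. The paper avoids this entirely: rather than bound one application of $M$ and iterate, it squares the full Duhamel product $\MM_{t_k}\cdots\MM_{t_2}\NN_{t_1}$ and bounds the expectation of the resulting double chain directly; Proposition~\ref{prop:chain-expect} then supplies the same extra $(C\theta)^{\bd(u_-,u_+)}$ factor through the whole chain. That correlation gain, obtained by an explicit coupling of two copies of $(\btau,\bB)$ along the path $[u_-,u_+]$, is what makes the argument close at $\theta\sim(d-1)^{-1/2}$; no pointwise kernel bound on $M$ can substitute for it.
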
 
	
	It is straight-forward to deduce Proposition \ref{prop:L2cauchy} from Proposition \ref{prop:Ybd L2}.
	\begin{proof}[Proof of Proposition \ref{prop:L2cauchy}]
From the construction we have
		\begin{equation}\label{eq:XRdiff:integralfrom}
		X_t^R(u) - X_t^{R-1}(u) = \int_0^\beta H_t^\gamma(u) d\gamma,
		\end{equation}
		and thus, the Cauchy-Schwarz inequality gives
		\[
		 \E \left[ \left(X_t^R(u)-X_t^{R-1}(u) \right)^2 \right] \le \beta^2  \sup_{\gamma \in[0,\beta]} \E\left[H_t^\gamma(u)^2 \right] \le C_d e^{C_dt} \alpha^{R-\textnormal{dist}(\rho,u)-3}(d-1)^{\bd(\rho,u)},
		\]
		concluding the proof. 
	\end{proof}

	Now we move on to the case of Proposition \ref{prop:L2 cauchy2}, where we use a similar argument. Let $\rho'$ be a neighbor of $\rho$, and let $\TT^\sharp_R:= \TT_R(\rho) \cup \TT_R(\rho')$. Here, we use the inverse temperature $\bbeta^{\gamma,\rho'} = \{\beta^{\gamma, \rho'}_{u,v} \}_{(u,v)\in E(\TT_R^\sharp)} $ defined as $\beta^{\gamma, \rho'}_{u,v} = \beta$ for $(u,v)\in E_R$, and $\beta^{\gamma,\rho'}_{u,v}=\gamma$ for $(u,v)\in E(\TT_R^\sharp)\setminus E_R$. Moreover, define the probability measure $\pi^{\gamma,\rho',\bx} := \pi_{\TT^\sharp_R}^{\bbeta^{\gamma,\rho'}, \bx}$.  The function $\bF^{\gamma,\rho'} : \mathbb{R}^{\TT^\sharp_R} \to \mathbb{R}^{\TT^\sharp_R}$ is defined to be $F_v^{\gamma,\rho'}(\bx):= \iprod{\sigma_v}_{\pi^{\gamma,\rho',\bx}}$. Then, as before, we consider the strong solution $\bY^{\gamma,\rho'} = (\bY^{\gamma,\rho'}_t)_{t\ge 0} = \{(Y_t^{\gamma,\rho'}(v))_{t\ge 0} \}_{v\in \TT^\sharp_R}$ of the following system of stochastic differential equations:
	\begin{equation*}
	    d\bY^{\gamma,\rho'}_t = \bF^{\gamma,\rho'}(\bY^{\gamma,\rho'}_t) dt + d\bW_t^\sharp, \quad \bY_0^{\gamma, \rho'} = \boo,
	\end{equation*}
	where $\bW^\sharp$ denotes the restriction of $\bW$ to $\TT^\sharp_R$. This setup gives $Y^{\gamma,\rho'} = X^R$ for $\gamma = 0$, and if we switch the roles of $\rho $ and $\rho'$ from the beginning then it will correspond to $X^{R,\rho'}$.
	Then, as before, its $\gamma$-derivative $\bH^{\gamma,\rho'}_t:= \partial_\gamma \bY_t^{\gamma,\rho'}$ satisfies
	\begin{equation*}
	    \frac{d \bH_t^{\gamma,\rho'}}{dt} = \partial_\gamma \{ \bF^{\gamma,\rho'}(\bY_t^{\gamma,\rho'}) \} = \nabla \bF^{\gamma,\rho'}(\bY^{\gamma,\rho'}_t) \bH_t^{\gamma,\rho'} + \partial_\gamma \bF^{\gamma,\rho'}(\bY^{\gamma,\rho'})_t,
	\end{equation*}
	and $\bH_0^{\gamma,\rho'} = \boo$. We can control $\bH_t^{\gamma,\rho'}$ as follows.

	\begin{corollary}\label{cor:Ybd L2:different center}
		Under the setting of Proposition \ref{prop:Ybd L2} and the above notations, we have
		\begin{equation*}
	  \E \left[ H_t^{\gamma,\rho'}(u)^2 \right] \le C_d e^{C_dt} \alpha^{R-\textnormal{dist}(\rho,u)-3} (d-1)^{\textnormal{dist}(\rho,u)},
		\end{equation*}
		for all $t>0$ and $u \in \TT_R^\sharp$. 
	\end{corollary}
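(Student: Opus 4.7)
The plan is to prove Corollary \ref{cor:Ybd L2:different center} by essentially re-running the proof of Proposition \ref{prop:Ybd L2} on the tree $\TT_R^\sharp$ with interpolation edge set $E(\TT_R^\sharp)\setminus E_R$ in place of the pair $(\TT_R,\partial E_R)$. By construction, $\bH^{\gamma,\rho'}$ satisfies the same form of linear integral equation as $\bH^\gamma$: the driving term $\partial_\gamma \bF^{\gamma,\rho'}(\bY_t^{\gamma,\rho'})$ has coordinates given by the Ising covariances $\cov{\sigma_v}{\sigma_a\sigma_b}_{\pi^{\gamma,\rho',\bx}}$ as $(a,b)$ ranges over $E(\TT_R^\sharp)\setminus E_R$, and the Jacobian $\nabla \bF^{\gamma,\rho'}$ has entries $\cov{\sigma_u}{\sigma_v}_{\pi^{\gamma,\rho',\bx}}$. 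Since $\TT_R^\sharp$ is still a finite subtree of $\TT$ with maximum degree $d$, the inputs to Proposition \ref{prop:Ybd L2}---the second-moment decay of Ising covariances in the regime $\tanh\beta\le c(d-1)^{-1/2}$, the uniform bound $|\partial_{x_u}F^{\gamma,\rho'}_v|\le 1$, and the Gronwall-type iteration that generates the $e^{C_dt}$ factor---depend only on local tree geometry and transfer unchanged. Note also that the interpolation set of the corollary sits on only $d-1$ of the $d$ branches emanating from $\rho'$, so any summation over interpolation edges is only smaller than the analogous sum in Proposition \ref{prop:Ybd L2}.

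Running the same recursion on $\TT_R^\sharp$ produces a bound in terms of the graph distance from $u$ to the interpolation set. Every edge in $E(\TT_R^\sharp)\setminus E_R$ has its farther endpoint at distance exactly $R$ from $\rho'$, on the side opposite $\rho$, so for any $u\in\TT_R^\sharp$ its graph distance to the interpolation set is at least $R-\bd(\rho',u)$. This yields the natural analogue of Proposition \ref{prop:Ybd L2},
\[
\E\!\left[H_t^{\gamma,\rho'}(u)^2\right] \le C_d\, e^{C_d t}\,\alpha^{R-\bd(\rho',u)-3}\,(d-1)^{\bd(\rho',u)}.
\]

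The final step is to convert the right-hand side from $\rho'$-distances to $\rho$-distances. Since $\rho'\sim\rho$, the triangle inequality gives $|\bd(\rho,u)-\bd(\rho',u)|\le 1$, so, using $\alpha<1$ and $d-1\ge 1$,
\[
\alpha^{R-\bd(\rho',u)-3}(d-1)^{\bd(\rho',u)} \;\le\; \alpha^{-1}(d-1)\cdot\alpha^{R-\bd(\rho,u)-3}(d-1)^{\bd(\rho,u)},
\]
and absorbing the harmless factor $\alpha^{-1}(d-1)$ into $C_d$ yields the statement of the corollary. The one genuine obstacle is the bookkeeping required to confirm the first step: that every quantitative estimate in the proof of Proposition \ref{prop:Ybd L2} depends solely on the finite-tree and interpolation-edge data, rather than on $\partial E_R$ being the complete boundary of a ball around a single root. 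I expect this check to be routine given how the Jacobian and driving terms are described by local Ising covariances, but it is where the actual verification lies.
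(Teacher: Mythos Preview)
Your proposal is essentially the paper's intended argument: the paper states the corollary without proof, leaving it as a direct adaptation of the proof of Proposition~\ref{prop:Ybd L2}. Your reduction---same integral expansion for $\bH^{\gamma,\rho'}$, same chain-of-covariances estimate, interpolation edges sitting in $d-1$ branches from $\rho'$---is correct.

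There is one small slip in the final conversion. The factor $\alpha^{-1}(d-1)$ you propose to absorb into $C_d$ is \emph{not} a constant depending only on $d$: recall $\alpha = C\theta(d-1)^{1/2}$, so $\alpha^{-1}$ blows up as $\beta\to 0$. Thus the two-step route (bound in $\bd(\rho',u)$, then convert) does not literally recover the exponent $R-\bd(\rho,u)-3$ with a $d$-only constant. The fix is immediate and actually simpler than your detour: work directly with $\bd(\rho,u)$. In the corollary's setting the inner endpoint $u_-$ of an interpolation edge satisfies $\bd(\rho,u_-)=R$ (not $R-1$ as in Proposition~\ref{prop:Ybd L2}), so the key inequality becomes $\bd(u,u_-)\ge R-\bd(\rho,u)$ for every $u\in\TT_R^\sharp$. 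Plugging this into \eqref{eq:bd-E-prod-NM} in place of $R-1-\bd(\rho,u)$ and summing over the $(d-1)^R$ interpolation edges gives the stated bound directly, with no $\alpha^{-1}$ to absorb. Your acknowledgment that the real work is checking that Proposition~\ref{prop:chain-expect} transfers is apt; it does, since its proof uses only that the ambient graph is a finite tree, that $u_\pm'$ are leaves, and that the incident edges carry temperature $\gamma$---all of which hold on $\TT_R^\sharp$.
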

	
	We address the proof of Proposition \ref{prop:L2 cauchy2} which comes as a direct consequence.
	\begin{proof}[Proof of Proposition \ref{prop:L2 cauchy2}]
	To bound $\E \left[ \left(X_t^{R}(u)-X_t^{R,\rho'}(u) \right)^2 \right]$, we can just bound the expectations $\E \left[ \left(X_t^{R} (u)- Y_t^{\beta,\rho'}(u) \right)^2 \right]$ and $\E \left[\left( X_t^{R,\rho'}(u) - Y_t^{\beta,\rho'}(u) \right)^2 \right]$ respectively.
	
	For the first one, we can write the difference $X_t^R(u) - Y_t^{\beta,\rho'}(u)=Y_t^{0,\rho'}(u) - Y_t^{\beta, \rho'}(u)$ in an integral form as \eqref{eq:XRdiff:integralfrom}, and hence Corollary \ref{cor:Ybd L2:different center} implies
		\begin{equation*}
	\E \left[ \left(X_t^{R} (u)- Y_t^{\beta,\rho'}(u) \right)^2 \right] \le C_d e^{C_dt}\alpha^{R-\textnormal{dist}(\rho,u)-3} (d-1)^{\textnormal{dist}(\rho,u)}.
		\end{equation*}
		By the same arguments and switching the roles of $\rho$ and $\rho'$, we can get the same bound for $\E \left[\left( X_t^{R,\rho'}(u) - Y_t^{\beta,\rho'}(u) \right)^2 \right]$. We conclude the proof by combining the two bounds together, and noting that $|\bd(\rho,u)-\bd(\rho',u)|\le 1$.
	\end{proof}

\subsection{Reduction to a chain of covariances}

Recall the definitions of $\bY^\gamma = \{Y^\gamma_t(u) \}$ given by \eqref{eq:def:tildeXt:SDE} and  $\bH^\gamma = \{ H^\gamma_t(u)\}$ given by \eqref{eq:def:Hgamma}.   From now on, let $\gamma \in [0,\beta]$ be a fixed number and write $\bY=\bY^\gamma$, $\bH = \bH^\gamma$, i.e., drop the superscript $\gamma$ from their expressions.
We shall also just write $\pi^{\bx}$ for $\pi^{\gamma,\bx}$ for any $\bx \in \R^{\TT_R}$.
In this subsection, we give an explicit formula of $\bH$ written by a chain of covariances, and state an estimate that controls its second moment, then prove Proposition \ref{prop:Ybd L2}. For the simplicity of exposition, we work with $\bH$, and it will be clear from the discussion that the methods we describe below can be used to investigate $\bH^{\gamma,\rho'}$ in the same way.

By straightforward computations, we can write \eqref{eq:def:Yt} as
$$
\frac{d\bH_t}{dt} 
=
\MM_t\bH_t + \NN_t
,$$
where $\MM_t$ (resp.~$\NN_t$) is a $\TT_R \times \TT_R$ matrix (resp.~$\TT_R$-vector) given by
\[
\MM_t(u,v)= \cov{\sigma_u}{\sigma_v}_{\pi^{\bY_t}}, \quad \NN_t(v)=\sum_{(u,u')\in \partial E_R}\cov{\sigma_u\sigma_{u'}}{\sigma_v}_{\pi^{\bY_t}}.
\]
Since each entry of $\MM_t$ and $\NN_t$ is bounded by $1$ and $|\partial E_R|$, respectively, we can write
\[
\bH_t = \sum_{k=1}^\infty
\int_{0<t_1<\cdots<t_k<t}
\MM_{t_k}\cdots \MM_{t_2}\NN_{t_1}d\bt,
\]
where $d\bt$ denotes $dt_1\ldots dt_k$.

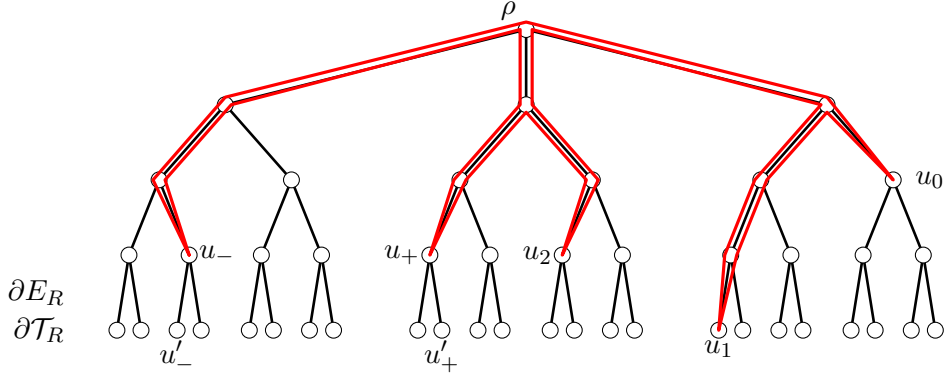
\begin{figure}
    \centering
\begin{tikzpicture}[line cap=round,line join=round,>=triangle 45,x=8cm,y=10cm]
\clip(-0.15,0.53) rectangle (1.51,1.05);

\draw [line width=1pt] (0.75,1.) -- (0.25,0.9);
\draw [line width=1pt] (0.75,1.) -- (0.75,0.9);
\draw [line width=1pt] (0.75,1.) -- (1.25,0.9);

\draw [line width=1pt] (0.14,0.8) -- (0.25,0.9);
\draw [line width=1pt] (0.36,0.8) -- (0.25,0.9);
\draw [line width=1pt] (0.64,0.8) -- (0.75,0.9);
\draw [line width=1pt] (0.86,0.8) -- (0.75,0.9);
\draw [line width=1pt] (1.14,0.8) -- (1.25,0.9);
\draw [line width=1pt] (1.36,0.8) -- (1.25,0.9);

\draw [line width=1pt] (0.14,0.8) -- (0.09,0.7);
\draw [line width=1pt] (0.36,0.8) -- (0.31,0.7);
\draw [line width=1pt] (0.64,0.8) -- (0.59,0.7);
\draw [line width=1pt] (0.86,0.8) -- (0.81,0.7);
\draw [line width=1pt] (1.14,0.8) -- (1.09,0.7);
\draw [line width=1pt] (1.36,0.8) -- (1.31,0.7);
\draw [line width=1pt] (0.14,0.8) -- (0.19,0.7);
\draw [line width=1pt] (0.36,0.8) -- (0.41,0.7);
\draw [line width=1pt] (0.64,0.8) -- (0.69,0.7);
\draw [line width=1pt] (0.86,0.8) -- (0.91,0.7);
\draw [line width=1pt] (1.14,0.8) -- (1.19,0.7);
\draw [line width=1pt] (1.36,0.8) -- (1.41,0.7);

\draw [line width=1pt] (0.07,0.6) -- (0.09,0.7);
\draw [line width=1pt] (0.29,0.6) -- (0.31,0.7);
\draw [line width=1pt] (1.07,0.6) -- (1.09,0.7);
\draw [line width=1pt] (1.29,0.6) -- (1.31,0.7);
\draw [line width=1pt] (0.57,0.6) -- (0.59,0.7);
\draw [line width=1pt] (0.79,0.6) -- (0.81,0.7);
\draw [line width=1pt] (0.17,0.6) -- (0.19,0.7);
\draw [line width=1pt] (0.39,0.6) -- (0.41,0.7);
\draw [line width=1pt] (1.17,0.6) -- (1.19,0.7);
\draw [line width=1pt] (1.39,0.6) -- (1.41,0.7);
\draw [line width=1pt] (0.67,0.6) -- (0.69,0.7);
\draw [line width=1pt] (0.89,0.6) -- (0.91,0.7);
\draw [line width=1pt] (0.11,0.6) -- (0.09,0.7);
\draw [line width=1pt] (0.33,0.6) -- (0.31,0.7);
\draw [line width=1pt] (1.11,0.6) -- (1.09,0.7);
\draw [line width=1pt] (1.33,0.6) -- (1.31,0.7);
\draw [line width=1pt] (0.61,0.6) -- (0.59,0.7);
\draw [line width=1pt] (0.83,0.6) -- (0.81,0.7);
\draw [line width=1pt] (0.21,0.6) -- (0.19,0.7);
\draw [line width=1pt] (0.43,0.6) -- (0.41,0.7);
\draw [line width=1pt] (1.21,0.6) -- (1.19,0.7);
\draw [line width=1pt] (1.43,0.6) -- (1.41,0.7);
\draw [line width=1pt] (0.71,0.6) -- (0.69,0.7);
\draw [line width=1pt] (0.93,0.6) -- (0.91,0.7);

\draw [fill=white] (0.75,1.) circle (3.0pt);
\draw [fill=white] (0.25,0.9) circle (3.0pt);
\draw [fill=white] (0.75,0.9) circle (3.0pt);
\draw [fill=white] (1.25,0.9) circle (3.0pt);
\draw [fill=white] (0.14,0.8) circle (3.0pt);
\draw [fill=white] (0.36,0.8) circle (3.0pt);
\draw [fill=white] (1.14,0.8) circle (3.0pt);
\draw [fill=white] (1.36,0.8) circle (3.0pt);
\draw [fill=white] (0.64,0.8) circle (3.0pt);
\draw [fill=white] (0.86,0.8) circle (3.0pt);
\draw [fill=white] (0.09,0.7) circle (3.0pt);
\draw [fill=white] (0.19,0.7) circle (3.0pt);
\draw [fill=white] (0.31,0.7) circle (3.0pt);
\draw [fill=white] (0.41,0.7) circle (3.0pt);
\draw [fill=white] (1.09,0.7) circle (3.0pt);
\draw [fill=white] (1.19,0.7) circle (3.0pt);
\draw [fill=white] (1.31,0.7) circle (3.0pt);
\draw [fill=white] (1.41,0.7) circle (3.0pt);
\draw [fill=white] (0.59,0.7) circle (3.0pt);
\draw [fill=white] (0.69,0.7) circle (3.0pt);
\draw [fill=white] (0.81,0.7) circle (3.0pt);
\draw [fill=white] (0.91,0.7) circle (3.0pt);
\draw [fill=white] (0.07,0.6) circle (3.0pt);
\draw [fill=white] (0.11,0.6) circle (3.0pt);
\draw [fill=white] (0.17,0.6) circle (3.0pt);
\draw [fill=white] (0.21,0.6) circle (3.0pt);
\draw [fill=white] (0.29,0.6) circle (3.0pt);
\draw [fill=white] (0.33,0.6) circle (3.0pt);
\draw [fill=white] (0.39,0.6) circle (3.0pt);
\draw [fill=white] (0.43,0.6) circle (3.0pt);
\draw [fill=white] (0.57,0.6) circle (3.0pt);
\draw [fill=white] (0.61,0.6) circle (3.0pt);
\draw [fill=white] (0.67,0.6) circle (3.0pt);
\draw [fill=white] (0.71,0.6) circle (3.0pt);
\draw [fill=white] (0.79,0.6) circle (3.0pt);
\draw [fill=white] (0.83,0.6) circle (3.0pt);
\draw [fill=white] (0.89,0.6) circle (3.0pt);
\draw [fill=white] (0.93,0.6) circle (3.0pt);
\draw [fill=white] (1.07,0.6) circle (3.0pt);
\draw [fill=white] (1.11,0.6) circle (3.0pt);
\draw [fill=white] (1.17,0.6) circle (3.0pt);
\draw [fill=white] (1.21,0.6) circle (3.0pt);
\draw [fill=white] (1.29,0.6) circle (3.0pt);
\draw [fill=white] (1.33,0.6) circle (3.0pt);
\draw [fill=white] (1.39,0.6) circle (3.0pt);
\draw [fill=white] (1.43,0.6) circle (3.0pt);

\draw (0.75,1) node[anchor=south east]{$\rho$};

\draw (0,0.6) node[anchor=east]{$\partial \TT_R$};
\draw (0,0.65) node[anchor=east]{$\partial E_R$};

\draw (0.17,0.6) node[anchor=north]{$u_-'$};
\draw (0.19,0.7) node[anchor=west]{$u_-$};
\draw (0.59,0.7) node[anchor=east]{$u_+$};
\draw (0.61,0.6) node[anchor=north]{$u_+'$};
\draw (1.38,0.8) node[anchor=west]{$u_0$};
\draw (1.07,0.6) node[anchor=north]{$u_1$};
\draw (0.81,0.7) node[anchor=east]{$u_2$};

\draw[very thick] [red] plot coordinates {(0.19,0.7) (0.15,0.8) (0.26,0.9) (0.74,1) (0.74,0.9) (0.63,0.8) (0.59,0.7) (0.65,0.8) (0.75,0.89) (0.85,0.8) (0.81,0.7) (0.87,0.8) (0.76,0.9) (0.76,1) (1.24,0.9) (1.13,0.8) (1.08,0.7) (1.07,0.6) (1.10,0.7) (1.15,0.8) (1.25,0.89) (1.36,0.8) (1.25,0.91) (0.75,1.01) (0.25,0.91) (0.13,0.8) (0.19,0.7)};

\end{tikzpicture}
\caption{
An illustration of a path visiting $u_-, u_0, \ldots, u_k, u_+$ in a $d$-regular tree
}  
\label{fig:path}
\end{figure}

We use the Cauchy-Schwarz inequality to obtain that
\[
\E [H_t(u)^2] \le \left(\sum_{k=1}^\infty \frac{1}{2^k} \right) \cdot \sum_{k=1}^\infty 2^k \E \left[\left( \int_{0<t_1< \cdots < t_k<t } [\MM_{t_k} \cdots \MM_{t_2} \NN_{t_1} ](u) d\bt \right)^2 \right].
\]
Moreover, applying the Cauchy-Schwarz inequality given by  $(\int_A 1 dt)(\int_A f(t)^2 dt) \ge (\int_A f(t)dt)^2 $ to the integral in the RHS, we have for any $u\in\TT_R$ that
\begin{equation}  \label{eq:bound-H2-sum-int}
\E[H_t(u)^2 ] \le \sum_{k=1}^\infty
2^k \frac{t^k}{k!}\int_{0<t_1<\cdots<t_k<t}\E\left[\left(\sum_{v_1,\cdots,v_k=u}\NN_{t_1}(v_1)\prod_{i=2}^k \MM_{t_i}(v_i,v_{i-1}) \right)^2\right] d\bt.    
\end{equation}
To study this formula and establish Proposition \ref{prop:Ybd L2}, the following bound is crucial, and its proof is presented in the next section. For a sequence of vertices $v_0, \ldots, v_k$, we write for simplicity that
\begin{equation*}
    \textnormal{dist}(v_{0:k}) := \sum_{i=1}^k \textnormal{dist}(v_i,v_{i-1}).
\end{equation*}
Also recall that we denote $\theta=\tanh\beta$.
\begin{proposition}   \label{prop:chain-expect}
There exists an absolute constant $C>0$ such that the following holds when $d$ is large enough.
For any $k\ge 0$, $u_0, \cdots, u_k \in \TT_R$, and $(u_-, u_-'), (u_+, u_+') \in \partial E_R$, $u_-', u_+' \in \partial \TT_R$, 
and $0\le t_-, t_+, t_1,\cdots, t_k <t$,
we have
\begin{equation}  \label{eq:chain-cov}
\begin{split}
\E\left[
\cov{\sigma_{u_-}\sigma_{u_-'}}{\sigma_{u_0}}_{\pi^{\bY_{t_-}}}
\cov{\sigma_{u_+}\sigma_{u_+'}}{\sigma_{u_k}}_{\pi^{\bY_{t_+}}}
\prod_{i=1}^k\cov{\sigma_{u_i}}{\sigma_{u_{i-1}}}_{\pi^{\bY_{t_i}}
}
\right]\\
\le  C^k (C\theta)^{\bd(u_-,u_+)+\bd(u_-,u_0)+\bd(u_+,u_k)+\bd(u_{0:k})-2}e^{25t+d}. \end{split}
\end{equation}
\end{proposition}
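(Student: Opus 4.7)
The plan is to exploit the Markov property of the Ising model on the tree $\TT_R$: under any $\pi^\bx$, the spins along a path form an inhomogeneous Markov chain, yielding a product decomposition for each covariance in \eqref{eq:chain-cov}. We then average the resulting (deterministic) product against the Gaussian-driven fields $\bY_{t_*}$.

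For any $u, v \in \TT_R$ and any field $\bx$, iteratively conditioning on the intermediate spins along the unique path $(u = w_0, w_1, \ldots, w_r = v)$ expresses $\cov{\sigma_u}{\sigma_v}_{\pi^\bx}$ as a telescoping product of single-edge ``contraction'' factors, each bounded in magnitude by $\theta = \tanh\beta$; hence $|\cov{\sigma_u}{\sigma_v}_{\pi^\bx}| \le \theta^{\bd(u,v)}$ pointwise. For the boundary covariances $\cov{\sigma_{u_\pm}\sigma_{u_\pm'}}{\cdot}_{\pi^{\bY_{t_\pm}}}$, since $u_\pm' \in \partial \TT_R$ is a leaf adjacent only to $u_\pm$, we integrate out $\sigma_{u_\pm'}$ using its explicit conditional distribution given $(\sigma_{u_\pm}, Y_{t_\pm}(u_\pm'))$. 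This peels off the boundary edge as a factor of $\theta$ times a local $\sech^2$-type function at $u_\pm'$, reducing the boundary covariance to one involving only $\sigma_{u_\pm}$ and accounting for the $-2$ in the exponent of \eqref{eq:chain-cov}.

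Substituting these decompositions expresses the product inside the expectation as a product of single-edge factors (times local $\sech^2$ contributions at the two boundary leaves) indexed by a walk $W$ in $\TT_R$ that visits $u_-', u_-, u_0, u_1, \ldots, u_k, u_+, u_+'$ in order. Using tree geometry, one decomposes $W$ into the backbone geodesic between $u_-$ and $u_+$ together with detours to each of $u_0, \ldots, u_k$; a careful edge-multiplicity count then yields a deterministic bound $\theta^{\bd(u_-,u_+) + \bd(u_-,u_0) + \bd(u_+,u_k) + \bd(u_{0:k}) - 2}$ on the product of single-edge factors, with the combinatorial overhead of the decomposition absorbed into the factor $C^k$.

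To handle the expectation, we use $\bY_{t_*} = t_* \btau + \bB_{t_*}$ with $\btau \sim \pi^{\bbeta^\gamma, \boo}$ and independent Brownian motions $\bB$ across vertices. Conditioning on $\btau$ and integrating out $\bB$ first, the single-edge factors depend on Gaussian values $Y_{t_*}(v)$ through $\sech^2$-type functions, and the independence of $\bB$ across vertices causes the expectation to factorize over vertices of $W$, each contributing a uniformly bounded moment that can be estimated by direct Gaussian calculation. Atypical large-deviation contributions from $|Y_{t_*}(v)|$ being large are controlled by Gaussian tail bounds and produce the factor $e^{25t}$, while $e^d$ absorbs combinatorial factors (of order at most $d$ per vertex) related to the local tree degree appearing in each conditional magnetization. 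The main obstacle is the combinatorial bookkeeping in the previous paragraph giving the precise exponent $\bd(u_-,u_+) + \cdots - 2$ rather than a looser walk count, together with a Gaussian moment bound uniform over $t_\pm, t_1, \ldots, t_k \in [0, t)$; the latter is delicate because the same vertex can appear in several covariances at different times, but is tractable since for fixed $\btau$ the relevant fields are jointly Gaussian.
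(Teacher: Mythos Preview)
Your proposal has a genuine gap at the step where you claim a ``deterministic bound $\theta^{\bd(u_-,u_+)+\bd(u_-,u_0)+\bd(u_+,u_k)+\bd(u_{0:k})-2}$ on the product of single-edge factors'' via an edge-multiplicity count along the walk $W$. The walk $W$ that visits $u_-',u_-,u_0,\ldots,u_k,u_+,u_+'$ has length exactly $2+\bd(u_-,u_0)+\bd(u_{0:k})+\bd(u_k,u_+)$, and the pointwise bounds on the covariances (e.g.\ from Lemma~\ref{L:cov-formula}) contribute at most one factor of $\theta$ per step of $W$. This accounts for the summands $\bd(u_-,u_0)+\bd(u_{0:k})+\bd(u_+,u_k)$ in the exponent, but \emph{not} the additional $\bd(u_-,u_+)$. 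For a concrete counterexample, take $k=0$ and $u_0=u_-$: then $|W|=\bd(u_-,u_+)+2$, while the desired exponent is $2\bd(u_-,u_+)-2$; the discrepancy $\bd(u_-,u_+)-2$ cannot be produced by any edge count. The paper makes precisely this point after the statement of the proposition: the bound without the $\bd(u_-,u_+)$ term is straightforward, but that term is the crucial one.

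The missing $\theta^{\bd(u_-,u_+)}$ comes not from smallness of the integrand but from \emph{cancellation} in the expectation: the factors $\sinh(2Y_{t_\pm}(u_\pm'))$ (and their analogues in \eqref{eq:cov:threepts:endptcase}) are odd under the spin-flip $\btau\mapsto -\btau$ on the subtree beyond $u_\pm$, so the signed expectation is small even though the integrand is not. Extracting the correct decay rate $\theta^{\bd(u_-,u_+)}$ requires tracking how this sign information propagates from $u_-'$ to $u_+'$ through the denominators $\oZ_{u_i,u_{i-1}}(\bY_{t_i})^2$, which depend on the field all along the relevant paths. The paper does this by first expanding the $\sinh$ factors recursively along $[u_-,u_+]$ (equations \eqref{eq:expan-1}--\eqref{eq:expan-2}), reducing to Proposition~\ref{p:chain-reformulate}, and then building an inductive coupling $\Gamma^{(\ell)}$ of two reweighted measures $\mu_+^{(\ell)},\mu_-^{(\ell)}$ along this path (Section~\ref{subsec:ind-coup:ind-coup}) together with the key estimate of Proposition~\ref{prop:key-esti}. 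Your Gaussian moment computation, which only bounds $|Y_{t_*}(v)|$, captures none of this cancellation and will not yield the stated bound.
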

For the exponent of $\bd(u_-,u_+)+\bd(u_-,u_0)+\bd(u_+,u_k)+\bd(u_{0:k})$, it can be understood as the length of the shortest path, which starts from $u_-$ and visits $u_0,\ldots, u_k$ sequentially, then goes to $u_+$ and back to $u_-$ (see Figure \ref{fig:path}). 
We also note that without the first term $\bd(u_-,u_+)$, such bound would be easy to prove (e.g. it directly follows from Lemma \ref{L:cov-formula} below). However, the term  $\bd(u_-,u_+)$ is crucial to get Proposition \ref{prop:Ybd L2}.

For $C$ from Proposition \ref{eq:chain-cov}, let $\eta=(C\theta)^{1/2}(d-1)^{-1/4}$. Then $C\theta < \eta < 1/\sqrt{d-1}$. Also let $S=(1-(d-1)C\theta\eta)^{-1}(1-C\theta/\eta)^{-1}$, and assume that $C\theta < 1/\sqrt{d-1}$.
Then we have
\begin{equation}  \label{eq:bd-E-prod-NM}
\begin{split}
&
\E\left[\left(\sum_{v_1,\cdots,v_k=u}\NN_{t_1}(v_1)\prod_{i=2}^k \MM_{t_i}(v_i,v_{i-1}) \right)^2\right]\\
=&
\E\left[\sum_{\substack{v_1,\cdots,v_k=u\\ w_1,\cdots,w_k=u}}\NN_{t_1}(v_1)\NN_{t_1}(w_1)\prod_{i=2}^k \MM_{t_i}(v_i,v_{i-1})\MM_{t_i}(w_i,w_{i-1}) \right]\\
\le &
e^{25t+d}C^{2k}\sum_{\substack{(u_-, u_-') \in \partial E_R, \\ (u_+, u_+') \in \partial E_R}}\sum_{\substack{v_1,\cdots,v_k=u\\ w_1,\cdots,w_k=u}}
(C\theta)^{\bd(u_-,u_+)+\bd(u_-,v_1)+\bd(u_+,w_1)+\bd(v_{1:k})+\bd(w_{1:k})-2}
\\
<&
e^{25t+d}C^{2k}\theta^{-2}\sum_{\substack{(u_-, u_-') \in \partial E_R, \\ (u_+, u_+') \in \partial E_R}}
S^{2k+2}\eta^{\bd(u_-,u_+)+\bd(u_-,u)+\bd(u_+,u)}
\\
\le&
e^{25t+d}C^{2k}\theta^{-2}S^{2k+2}\eta^{2(R-1-\bd(\rho,u))}\sum_{\substack{(u_-, u_-') \in \partial E_R, \\ (u_+, u_+') \in \partial E_R}}
\eta^{\bd(u_-,u_+)}
\\
\le&
e^{25t+d}C^{2k}\theta^{-2}S^{2k+2}\eta^{2(R-1-\bd(\rho,u))} d^2(d-1)^{R-1} \sum_{i=0}^{R-1} \eta^{2i}(d-1)^i
\\
<&
e^{25t+d}C^{2k}\theta^{-2}S^{2k+2}\eta^{2(R-1-\bd(\rho,u))} d^2(d-1)^{R-1} (1-\eta^2(d-1))^{-1}.
\end{split}
\end{equation}
Here the first inequality is by Proposition \ref{prop:chain-expect}, the third equality is by $\bd(u_-,u), \bd_{u_+,u}\ge R-1-\bd(\rho,u)$, and the fourth and fifth inequalities are by direct computations. The second inequality is by the following lemma.
\begin{lemma}
For any $k\ge 2$ and $v_1, v_k \in \TT$, we have
\[
\sum_{v_2,\cdots, v_{k-1}\in\TT} (C\theta)^{\bd(v_{1:k})} < S^k\eta^{\bd(v_1,v_k)}.
\]
\end{lemma}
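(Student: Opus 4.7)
The plan is to reduce the claim to a single-vertex estimate and then iterate it. Write $a := C\theta$; the hypotheses give $a < \eta$, $a/\eta < 1$, and $(d-1)a\eta < 1$. The core step I would establish is the one-step bound
\[
\sum_{v \in \TT} a^{\bd(v,w)}\eta^{\bd(u,v)} \;\le\; S\,\eta^{\bd(u,w)} \qquad \text{for all } u,w \in \TT.
\]
Granting this, the lemma would follow by induction on $k \ge 2$. The base case $k=2$ is immediate, since the LHS is just $a^{\bd(v_1,v_2)} \le \eta^{\bd(v_1,v_2)} < S^2 \eta^{\bd(v_1,v_2)}$ using $a \le \eta$ and $S > 1$. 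For the inductive step from $k$ to $k+1$, I would peel off the innermost summation over $v_k$: applying the induction hypothesis to $\sum_{v_2,\dots,v_{k-1}}$ leaves $S^{k-1}\sum_{v_k \in \TT} a^{\bd(v_k,v_{k+1})}\eta^{\bd(v_1,v_k)}$, and one further application of the one-step bound supplies the missing factor $S\,\eta^{\bd(v_1,v_{k+1})}$, producing $S^k \eta^{\bd(v_1,v_{k+1})}$.

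To prove the one-step bound, I would parametrize each $v \in \TT$ by its projection onto the geodesic $P$ from $u$ to $w$. Write $D := \bd(u,w)$, and for $v \in \TT$ let $p(v) \in P$ be the unique closest vertex of $P$. Setting $i := \bd(p(v),w) \in \{0,\dots,D\}$ and $j := \bd(p(v),v) \ge 0$, we have $\bd(v,w) = i+j$ and $\bd(u,v) = (D-i)+j$. The number of $v$ with a given pair $(i,j)$ is $1$ when $j=0$ and at most $(d-1)^j$ when $j \ge 1$ (interior points of $P$ have $d-2$ off-path neighbors and the endpoints $u,w$ have $d-1$, and each subsequent step into the pendant subtree contributes a factor of $d-1$). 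Substituting and rearranging,
\[
\sum_{v \in \TT} a^{\bd(v,w)}\eta^{\bd(u,v)} \;\le\; \eta^D \sum_{i=0}^{D}\left(\frac{a}{\eta}\right)^{\!i}\!\left(1 + \sum_{j\ge 1}(d-1)^j (a\eta)^j\right),
\]
and the two geometric series (extending the $i$-sum to $i \in \N$) yield exactly $(1-a/\eta)^{-1}(1-(d-1)a\eta)^{-1} = S$.

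The main delicacy is lining up the tree counting with the precise constants defining $S$: one must separate the "parallel" summation along the geodesic from the "perpendicular" summation into the pendant subtrees, and the two geometric series converge precisely because $C\theta \le c(d-1)^{-1/2}$ forces both $a/\eta < 1$ and $(d-1)a\eta < 1$. Beyond this, no finer structure of $\TT$ is needed.
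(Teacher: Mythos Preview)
Your argument is correct and is essentially the same as the paper's: both proceed by induction on $k$, parametrizing the new intermediate vertex by its projection onto the geodesic between the two fixed endpoints, which produces the same two geometric series summing to $S$. Your isolation of the one-step bound $\sum_{v} a^{\bd(v,w)}\eta^{\bd(u,v)} \le S\,\eta^{\bd(u,w)}$ is a clean way to package this, whereas the paper redoes this computation inside the inductive step (peeling off $v_2$ rather than $v_k$, which is symmetric).

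One small indexing slip: in your inductive step the inner sum $\sum_{v_2,\dots,v_{k-1}}$ is exactly the level-$k$ quantity, so the induction hypothesis gives $S^{k}\eta^{\bd(v_1,v_k)}$ rather than $S^{k-1}$, and after the one-step bound you land on $S^{k+1}\eta^{\bd(v_1,v_{k+1})}$, matching the stated lemma. (Alternatively you could strengthen the hypothesis to $S^{k-1}$ throughout, in which case the base case should read $S^{1}$ rather than $S^{2}$.) Either way the argument goes through.
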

\begin{proof}
First, by symmetry the LHS depends only on $k$ and $N:=\bd(v_1,v_k)$.
Denote the LHS by $A_{k,N}$, and we prove $A_{k,N}<S^k\eta^N$ by induction in $k$.
For $k=2$, we have $A_{2,N}=(C\theta)^N < \eta^N$.
Now suppose that $A_{k,N}<S^k\eta^N$ for some $k\ge 2$ and any $N$.
For $v_1, \cdots, v_{k+1}\in\TT$, let $v'$ be the (only) vertex with the smallest $\bd(v_1,v')+\bd(v_2,v')+\bd(v_{k+1},v')$. 
Let $m=\bd(v_1,v')$ and $m'=\bd(v',v_2)$, then $\bd(v_1,v_2)=m+m'$ and $\bd(v_2,v_{k+1})=\bd(v_1,v_{k+1})-m+m'$. 
Thus by the induction hypothesis we have
\[
A_{k+1,N} \le \sum_{m=0}^N \sum_{m'=0}^{\infty} (d-1)^{m'}(C\theta)^{m+m'}A_{k,N-m+m'}
< \sum_{m=0}^N \sum_{m'=0}^{\infty} (d-1)^{m'}(C\theta)^{m+m'}S^k\eta^{N-m+m'}.
\]
Since $C\theta < \eta < 1/\sqrt{d-1}$, by first summing over $m'$ then over $m$, we can bound this by
\[
S^k\eta^N (1-(d-1)C\theta\eta)^{-1}(1-C\theta/\eta)^{-1} = S^{k+1}\eta^N
\]
where the equality holds by the definition of $S$.
\end{proof}

\begin{proof}[Proof of Proposition \ref{prop:Ybd L2}]
By \eqref{eq:bound-H2-sum-int} and \eqref{eq:bd-E-prod-NM}, we conclude that when $d$ is large enough,
\[
\begin{split}
&\E[H_t^\gamma(u)^2 ] \\ &\le e^{25t+d}\eta^4\theta^{-2}S^2
d^2(d-1)^{\bd(\rho,u)+2}
(\eta^2(d-1))^{R-\bd(\rho,u)-3}(1-\eta^2(d-1))^{-1}
\sum_{k=1}^\infty
2^k \left(\frac{t^k}{k!}\right)^2
C^{2k}S^{2k}\\
&<
e^{25t+d}\eta^4\theta^{-2}S^2
d^2(d-1)^{\bd(\rho,u)+2}
(\eta^2(d-1))^{R-\bd(\rho,u)-3}(1-\eta^2(d-1))^{-1}e^{4tCS}
\end{split}
\] 
Now we take $c$ small enough to ensure that $d$ is large and $c<C^{-2}$. Recall that $\theta=\tanh\beta \le c(d-1)^{-\frac{1}{2}}$ and $\eta=(C\theta)^{1/2}(d-1)^{-1/4}$, so we have $C^2\theta<(d-1)^{-1/2}$ and $S<(1-C^{-1})^{-1}(1-C^{-1/2})^{-1}$, thus $S$ has a universal upper bound; and we also have $(1-\eta^2(d-1))^{-1} < (1-C^{-1})^{-1}$, and $\eta^2\theta^{-1}=C(d-1)^{-1/2}$.
By taking
$\alpha=\eta^2(d-1)$, and \[C_d \ge (4CS+25)\vee e^{d}\eta^4\theta^{-2}S^2
d^2(d-1)^2(1-\eta^2(d-1))^{-1},\]
depending only on $d$,
the conclusion of Proposition \ref{prop:Ybd L2} follows. 
\end{proof}

\section{Inductive coupling for a chain of covariances}\label{sec:inductivecoup}

This section is devoted to the proof of Proposition \ref{prop:chain-expect}. We begin by introducing the notion of induced external field and setting up the necessary notations in Section \ref{subsec:ind-coup:prelim}. Then, in Section \ref{subsec:ind-coup:covchain}, we reformulate the chain of covariances in Proposition \ref{prop:chain-expect} into a more tractable form, consisting of products of partition functions. Furthermore, in Section \ref{subsec:ind-coup:ind-coup} we discuss the method of inductive coupling to investigate these partition functions, and derive necessary estimates needed for the coupling argument in Section \ref{subsec:ind-coup:estims}, the final subsection.

\subsection{Preliminaries}\label{subsec:ind-coup:prelim}
In this section we work under the setting of Proposition \ref{prop:chain-expect}.
Specifically, we work on $\TT_R$ for fixed $R$, and let $\bbeta \in \R^{E_R}$ such that $\beta_{u, v}=\beta$ if $(u, v) \not\in \partial E_R$, and $\beta_{u, v}=\gamma$ if $(u, v) \in \partial E_R$.

We first introduce the notion of \textit{induced external field}.
Take any $\bx \in \R^{\TT_R}$ and an edge
 $(u, v) \in E_R$.
Let the subgraph $\TT_{u\setminus v}\subset\TT_R$ be defined by removing the edge $(u, v)$ from $\TT_R$ and taking the connected component containing $u$, and consider the subgraph $\TT_{u\to v}$ obtained by adding the vertex $v$ back to $\TT_{u \setminus v}$ via the edge $(u,v)$.  We define the \textit{Belief Propagation message} $m_{u\to v}^{\bx}$  from $u$ to $v$ as the probability measure on $\{\pm 1 \}$ given by
\[
m_{u\to v}^{\bx}(\sigma) := \frac{1}{Z_{u \to v}^{\bx}} \sum_{\substack{(\sigma_{v'})_{v'\in \TT_{u \to v}} \in \{\pm 1\}^{ \TT_{u \to v} }, \\ \textnormal{with } \sigma_v = \sigma}} \exp \left(\sum_{(u',v')\in E(\TT_{u\to v}) }  \beta_{u',v'}\sigma_{u'}\sigma_{v'}  +\sum_{v' \in \TT_{u\setminus v}} x(v')  \right),
\]
where $Z_{u\to v}^{\bx}$ is the normalizing constant that makes $m_{u\to v}^{\bx}$ a probability measure, i.e. $m_{u\to v}^{\bx}(+1)+m_{u\to v}^{\bx}(-1)=1$.
Note that we regard the external field at $v$ as 0, to measure the effect of $\TT_{u\setminus v}$  on $v$ via the edge $(u,v)$.
See \cite[Chapter 14]{mm09} for a detailed background on the notion of Belief Propagation.

Then, the \textit{induced external field on} $v$ from $u$, denoted by $\zeta_{u\to v}^{\bx}$, is defined as
\begin{equation*}
    \zeta_{u \to v}^{\bx} := \frac{1}{2} \log \left( \frac{m_{u\to v}^{\bx}(+1) }{m_{u\to v}^{\bx}(-1) } \right).
\end{equation*}
In particular, it satisfies $m_{u\to v}^{\bx}(+1) = \frac{1}{2}\{1+\tanh (\zeta_{u\to v}^{\bx})\}$. 
It is straightforward to compute that
$\tanh(\zeta_{u\to v}^{\bx}) = \tanh(\beta_{u,v})\tanh(x(u)+\sum_{w\sim u, w\neq v} \zeta_{w\to u}^{\bx})$, so
\begin{equation}  \label{eq:boundutov}
|\zeta_{u \to v}^{\bx}| \le \beta_{u,v} \le \beta. 
\end{equation}
We also have that
\[
\iprod{\sigma_v}_{\pi_{\TT_{u\to v}}^{\bx}}
=
\frac{m_{u \to v}^{\bx}(+1)e^{x(v)} -m_{u \to v}^{\bx}(-1)e^{-x(v)} }{m_{u \to v}^{\bx}(+1)e^{x(v)} + m_{u \to v}^{\bx}(-1)e^{-x(v)}}=
\tanh(\zeta^\bx_{u\to v}+x(v))  .
\]

We can understand $\zeta_{u\to v}^{\bx} $ alternatively as follows. Let $\TT_{v \setminus u}$ be defined as before (but take the connected component of $v$). If we let $\bx'=(x'(v'))_{v'\in\TT_{v\setminus u}}\in\R^{\TT_{v\setminus u}}$ be the vector where $x'(v')=x(v')+\don[v'=v]\zeta^\bx_{u\to v}$, then $\zeta_{u \to v}^{\bx}$ is the number such that 
the measure $\pi_{\TT_{v\setminus u}}^{\bx'}$ on $\{\pm 1\}^{\TT_{v\setminus u}}$ gives the marginal distribution of $\pi^{\bx}$ on $\TT_{v\setminus u}$.

For any connected subgraph $\GG$ of $\TT_R$ and for any $v\in\GG$, we let 
\begin{equation}\label{eq:def:zetaG}
    \zeta^\bx_\GG(v) = \sum_{u\sim v, u\not\in \GG} \zeta^\bx_{u\to v}.
\end{equation}
Then, if we let $\bx'=(x'(v))_{v\in\GG}\in\R^{\GG}$ be the vector where $x'(v)=x(v)+\zeta^\bx_{\GG}(v)$, then the measure $\pi_{\GG}^{\bx'}$ on $\GG$ gives the marginal distribution of $\pi^{\bx}$ on $\GG$.
Let $\tZ_\GG(\bx)$ be the partition function of $\pi^\bx$ restricted to $\GG$; i.e. we let
 \begin{equation}\label{eq:def:restrictedpartitionfunction:plain}
 \tZ_\GG(\bx) := \sum_{(\sigma_v) \in \{\pm 1\}^{\GG}} \exp\left( \sum_{(u,v)\in E(\GG)} \beta_{u,v}\sigma_u\sigma_v + \sum_{v\in \GG} (x(v)+\zeta_\GG^\bx(v))\sigma_v \right). 
 \end{equation}
We shall also need the following notation of the partition function with some fixed spins.
Take any $\HH\subset \GG$ and $h\in \{\pm 1\}^\HH$. We denote
 \begin{equation}\label{eq:def:ZGh}
 \tZ_{\GG}^{h}(\bx) := \sum_{\substack{(\sigma_v) \in \{\pm 1\}^{\GG}\\ \sigma_v=h(v),\forall v\in\HH}} \exp\left( \sum_{(u,v)\in E(\GG)} \beta_{u,v}\sigma_u\sigma_v + \sum_{v\in \GG} (x(v)+\zeta_\GG^\bx(v))\sigma_v \right).
 \end{equation}
 Here, $\HH \subset \GG$ is a set of vertices in $\GG$ whose spins are fixed by the assignment  $h$.

For each $u,v\in\TT_R$, denote $[u,v]$ as the subgraph given by the shortest path from $u$ to $v$.
For simplicity of notations, we also write $\tZ_{u,v}(\bx):=\tZ_{[u,v]}(\bx)$, and we denote the normalized version as $\oZ_{u,v}(\bx):= \frac{\tZ_{u,v}(\bx)}{\tZ_{u,v}(\boo)}\ge 1$.
We also let
\begin{equation} \label{eq:defA}
A_{u,v} = \prod_{(u',v')\in E([u,v])} \tanh(\beta_{u',v'}).    
\end{equation}
Then by straightforward computation we have that $A_{u,v}$ equals the covariance without external field $\cov{\sigma_u}{\sigma_v}_{\pi}$.
We can use these quantities to write the covariances with external field $\bx$.
\begin{lemma}  \label{L:cov-formula}
For any $u, v \in \TT_R$, we have
\begin{equation}\label{eq:cov:twopts}
\cov{\sigma_u}{\sigma_v}_{\pi^{\bx}} = \frac{A_{u,v}}{\oZ_{u,v}(\bx)^2}.
\end{equation}
For any $(u,u')\in\partial E_R$ with $u'\in \partial\TT_R$, if $v\neq u'$ we have
\begin{equation}\label{eq:cov:threepts:nonendptcase}
\cov{\sigma_u\sigma_{u'}}{\sigma_v}_{\pi^{\bx}} = \frac{\sinh(2x(u'))A_{u,v}}{2\oZ_{u',v}(\bx)^2\cosh^2(\gamma)},
\end{equation}
and if $v=u'$ we have
\begin{equation}\label{eq:cov:threepts:endptcase}
\cov{\sigma_u\sigma_{u'}}{\sigma_v}_{\pi^{\bx}} = \frac{\sinh(2x(u)+2\zeta^{\bx}_{[u,u']}(u))}{2\oZ_{u',u}(\bx)^2\cosh^2(\gamma)}.
\end{equation}
\end{lemma}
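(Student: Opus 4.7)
The proof reduces everything to computations on the one-dimensional Ising chain along $[u,v]$ (or $[u',v]$). The crucial fact is the defining property of the induced external field: the marginal of $\pi^\bx$ on any connected subgraph $\GG \subset \TT_R$ is the Ising measure $\pi_\GG^{\bx+\zeta^\bx_\GG}$, whose partition function is $\tZ_\GG(\bx)$ from \eqref{eq:def:restrictedpartitionfunction:plain}. Consequently all three covariances in the lemma may be evaluated on the 1D marginal on $[u,v]$ or $[u',v]$, with effective fields $h_w = x(w) + \zeta^\bx_{[u,v]}(w)$.

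For \eqref{eq:cov:twopts}, I would argue by induction on the length $k$ of $[u,v]$. The base case $k=0$ reduces to $\Var(\sigma_u) = \sech^2 h_u$, matching $A_{u,u}/\oZ_{u,u}(\bx)^2 = 1/\cosh^2 h_u$. For the inductive step, let $w_1$ be the neighbor of $u$ on the path. Then $\iprod{\sigma_u\mid \sigma_{w_1}} = \tanh(\beta\sigma_{w_1}+h_u)$ is affine in $\sigma_{w_1}\in\{\pm 1\}$, namely $\alpha_u+\beta'_u\sigma_{w_1}$ with $\beta'_u = \sinh(2\beta)/(2\cosh(\beta+h_u)\cosh(\beta-h_u))$. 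The tower property gives $\cov{\sigma_u}{\sigma_v}_{\pi^\bx} = \beta'_u\,\cov{\sigma_{w_1}}{\sigma_v}_{\pi^\bx}$, which by the inductive hypothesis equals $\beta'_u\,A_{w_1,v}/\oZ_{w_1,v}(\bx)^2$. Separately, integrating $\sigma_u$ out of $\tZ_{u,v}(\bx)$ produces a factor $2\sqrt{\cosh(\beta+h_u)\cosh(\beta-h_u)}$ together with a shift of the effective field at $w_1$ by $\zeta^\bx_{u\to w_1}$; the latter exactly cancels the difference $\zeta^\bx_{[w_1,v]}(w_1)-\zeta^\bx_{[u,v]}(w_1) = \zeta^\bx_{u\to w_1}$, yielding $\tZ_{u,v}(\bx) = 2\sqrt{\cosh(\beta+h_u)\cosh(\beta-h_u)}\,\tZ_{w_1,v}(\bx)$. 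Normalizing at $\bx=\boo$ gives $\oZ_{u,v}(\bx)^2/\oZ_{w_1,v}(\bx)^2 = \cosh(\beta+h_u)\cosh(\beta-h_u)/\cosh^2\beta = \tanh\beta/\beta'_u$, which together with $A_{u,v} = \tanh\beta\cdot A_{w_1,v}$ closes the induction.

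For \eqref{eq:cov:threepts:nonendptcase} and \eqref{eq:cov:threepts:endptcase}, the same leaf-integration idea applies at $u'$: since $u$ is its only neighbor, $\iprod{\sigma_{u'}\mid\sigma_u} = \tanh(\gamma\sigma_u+x(u')) = \alpha+\beta'\sigma_u$ with $\alpha=\sinh(2x(u'))/(2\cosh(\gamma+x(u'))\cosh(\gamma-x(u')))$, so $\iprod{\sigma_u\sigma_{u'}\mid\sigma_u} = \beta'+\alpha\sigma_u$. For $v\ne u'$, the tower property yields $\cov{\sigma_u\sigma_{u'}}{\sigma_v}_{\pi^\bx} = \alpha\,\cov{\sigma_u}{\sigma_v}_{\pi^\bx}$; substituting \eqref{eq:cov:twopts} together with the analogous partition-function ratio $\oZ_{u',v}(\bx)^2/\oZ_{u,v}(\bx)^2 = \cosh(\gamma+x(u'))\cosh(\gamma-x(u'))/\cosh^2\gamma$ produces \eqref{eq:cov:threepts:nonendptcase}. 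For $v=u'$, $\sigma_{u'}^2=1$ gives $\iprod{\sigma_u\sigma_{u'}\cdot\sigma_{u'}}_{\pi^\bx}=\iprod{\sigma_u}_{\pi^\bx}$; computing all three factors in $\cov{\sigma_u\sigma_{u'}}{\sigma_{u'}}$ on the two-vertex marginal over the edge $[u,u']$ (with fields $h_u$ at $u$ and $x(u')$ at $u'$) and simplifying via $\tanh\zeta^\bx_{u'\to u} = \tanh\gamma\tanh x(u')$ yields \eqref{eq:cov:threepts:endptcase}.

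The only real obstacle is algebraic bookkeeping: carefully tracking how $\zeta^\bx_\GG$ transforms when a boundary vertex is integrated out. The key point is the exact cancellation of the newly generated induced field $\zeta^\bx_{u\to w_1}$ (or $\zeta^\bx_{u'\to u}$) against the shift in $\zeta^\bx_\GG$ on the remaining endpoint, which makes $\tZ_\GG(\bx)$ transform cleanly under each reduction. Once this is set up, all the remaining identities are short $\sinh/\cosh/\tanh$ manipulations.
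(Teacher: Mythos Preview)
Your proposal is correct. The three-point identities \eqref{eq:cov:threepts:nonendptcase} and \eqref{eq:cov:threepts:endptcase} are handled the same way in both your argument and the paper's: condition out the leaf spin using the affine form of $\E[\sigma_{u'}\mid\sigma_u]$, reduce to the two-point formula, and match the $\oZ$ ratio via the factorization $\tZ_{u',v}(\bx)=2\sqrt{\cosh(\gamma+x(u'))\cosh(\gamma-x(u'))}\,\tZ_{u,v}(\bx)$.

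Where you genuinely diverge from the paper is in the proof of \eqref{eq:cov:twopts}. The paper writes the covariance as $4(L_{1,1}L_{-1,-1}-L_{1,-1}L_{-1,1})/\tZ_{u,v}(\bx)^2$, expands the numerator as a sum over \emph{pairs} of configurations $(\sigma,\overline\sigma)$ on $[u,v]$, and produces an explicit bijection (swapping $\sigma$ and $\overline\sigma$ on $[w,v]$ at the first vertex $w$ where they agree) showing that all $\bx$-dependent terms cancel; the surviving part is then identified with $A_{u,v}\tZ_{u,v}(\boo)^2$ by evaluating at $\bx=\boo$. Your route is instead an induction on the path length: peel off the endpoint $u$ by the tower property, picking up the slope $\beta'_u=\sinh(2\beta_{u,w_1})/(2\cosh(\beta_{u,w_1}+h_u)\cosh(\beta_{u,w_1}-h_u))$, and verify that this exactly matches the ratio $\oZ_{u,v}(\bx)^2/\oZ_{w_1,v}(\bx)^2$ after the induced field $\zeta^\bx_{u\to w_1}$ is absorbed into $\zeta^\bx_{[w_1,v]}(w_1)$. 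The paper's bijection is a one-shot combinatorial cancellation that avoids any recursion; your argument is more mechanical but makes explicit the transfer-matrix structure and the role of the induced field at each step. Both are short; neither dominates. One small remark: your formulas should use the edge-dependent $\beta_{u,w_1}$ rather than $\beta$, since boundary edges carry $\gamma$, but the computation is unchanged.
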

We leave the proof of this lemma to Appendix \ref{sec:appa}.

\subsection{Evaluating the chain of covariances}\label{subsec:ind-coup:covchain}
Take $u_0, \cdots, u_k \in \TT_R$ and $(u_-, u_-'), (u_+, u_+') \in \partial E_R$, as in Proposition \ref{prop:chain-expect}.
Also recall the definition of $\bY$ given by \eqref{eq:def:tildeXt:SDE}, and that in law we have \begin{equation}  \label{eq:bYlaw}
\bY=(\bY_t)_{t\ge 0} \overset{d}{=} (t\btau + \bB_t)_{t\ge 0},
\end{equation}
where $\btau=(\tau_v)_{v\in \TT_R}$ is sampled from $\pi=\pi^{\gamma,\boo}$, the free Ising model on $\TT_R$ with inverse temperature $\gamma$ on $\partial E_R$, and inverse temperature $\beta$ on other edges;
and ${\bB} = \{({B}_v(t) )_{ t\ge 0}\}_{v\in \TT_R}$ is another collection of independent one-dimensional standard Brownian motions.
We couple $\bY$ with $\btau, \bB$, so that this equality holds almost surely.

Now we evaluate the LHS of \eqref{eq:chain-cov} using Lemma \ref{L:cov-formula}.
If $u_0\neq u_-'$ and $u_k\neq u_+'$ it equals
\begin{equation} \label{eq:cov-chain}
\E\left[
\frac{\sinh\left(2Y_{t_-}(u_{-}')\right)\sinh\left(2Y_{t_{+}}(u_{+}')\right)}
{4
\oZ_{u_-',u_{0}}(\bY_{t_-})^2
\oZ_{u_+',u_{k}}(\bY_{t_+})^2
\prod_{i=1}^{k}\oZ_{u_i,u_{i-1}}(\bY_{t_i})^2
\cosh^4(\gamma)}
\right]
A_{u_-,u_0}A_{u_k,u_+}\prod_{i=1}^k A_{u_i,u_{i-1}}.
\end{equation}
If $u_0= u_-'$ and $u_k\neq u_+'$, it equals
\begin{equation} \label{eq:cov-chain2}
\E\left[
\frac{\sinh\left(2Y_{t_-}(u_-) + 2\zeta^{\bY_{t_-}}_{[u_-,u_-']}(u_-)\right)\sinh\left(2Y_{t_{+}}(u_{+}')\right)}
{4
\oZ_{u_-',u_-}(\bY_{t_-})^2
\oZ_{u_+',u_{k}}(\bY_{t_+})^2
\prod_{i=1}^{k}\oZ_{u_i,u_{i-1}}(\bY_{t_i})^2
\cosh^4(\gamma)}
\right]
A_{u_k,u_+}\prod_{i=1}^k A_{u_i,u_{i-1}}.
\end{equation}
If $u_0\neq u_-'$ and $u_k= u_+'$, it equals
\begin{equation} \label{eq:cov-chain3}
\E\left[
\frac{
\sinh\left(2Y_{t_-}(u_-')\right)
\sinh\left(2Y_{t_+}(u_+) + 2\zeta^{\bY_{t_+}}_{[u_+,u_+']}(u_+)\right)}
{4
\oZ_{u_-',u_0}(\bY_{t_-})^2
\oZ_{u_+',u_+}(\bY_{t_+})^2
\prod_{i=1}^{k}\oZ_{u_i,u_{i-1}}(\bY_{t_i})^2
\cosh^4(\gamma)}
\right]
A_{u_-,u_0}\prod_{i=1}^k A_{u_i,u_{i-1}}.
\end{equation}
If $u_0= u_-'$ and $u_k= u_+'$, it equals
\begin{equation} \label{eq:cov-chain4}
\E\left[
\frac{
\sinh\left(2Y_{t_-}(u_-) + 2\zeta^{\bY_{t_-}}_{[u_-,u_-']}(u_-)\right)
\sinh\left(2Y_{t_+}(u_+) + 2\zeta^{\bY_{t_+}}_{[u_+,u_+']}(u_+)\right)}
{4
\oZ_{u_-',u_-}(\bY_{t_-})^2
\oZ_{u_+',u_+}(\bY_{t_+})^2
\prod_{i=1}^{k}\oZ_{u_i,u_{i-1}}(\bY_{t_i})^2
\cosh^4(\gamma)}
\right]
\prod_{i=1}^k A_{u_i,u_{i-1}}.
\end{equation}
If $u_-=u_+$, we would get Proposition \ref{prop:chain-expect} immediately from these equations.
Indeed, for each $u, v\in \TT_R$ we have $A_{u,v}\le (\tanh\beta)^{\bd(u,v)}=\theta^{\bd(u,v)}$, by \eqref{eq:defA} (recall that $\theta=\tanh\beta$).
For the expectation factor in each case, note that the denominator is at least $4$.
Thus using \eqref{eq:boundutov} and \eqref{eq:bYlaw}, the expectation  is bounded by $\E[e^{4t+2B_t(u_-')+2B_t(u_+')}]$, $\E[e^{4t+2B_t(u_-)+2d\beta+2B_t(u_+')}]$, $\E[e^{4t+2B_t(u_-')+2B_t(u_+)+2d\beta}]$, or $\E[e^{4t+2B_t(u_-)+2B_t(u_+)+4d\beta}]$, in each case respectively; and each can be further bounded by $e^{8t+d}$, since $d$ is large enough thus $\beta$ is small enough.

Below we assume that $u_-\neq u_+$, thus $u_-'\neq u_+'$.
When $u_0\neq u_-'$ we let $u_{-1}=u_-'$, and otherwise we let $u_{-1}=u_-$;
similarly, when $u_k\neq u_+'$ we let $u_{k+1}=u_+'$, and otherwise we let $u_{k+1}=u_+$.
We also denote $t_0:=t_-$, $t_{k+1}:=t_+$.
A motivation for such definitions is that, for the denominator inside the expectation in each of \eqref{eq:cov-chain}, \eqref{eq:cov-chain2}, \eqref{eq:cov-chain3}, \eqref{eq:cov-chain4}, it can now be written as $4\prod_{i=0}^{k+1}\oZ_{u_i,u_{i-1}}(\bY_{t_i})^2\cosh^4(\gamma)$.

Consider $[u_{-1},u_{k+1}]$, the shortest path from $u_{-1}$ to $u_{k+1}$, and we enumerate the vertices on the path as $u_{-1}=v_1, v_2, \ldots, v_n = u_{k+1}$ for some $n\in\Z_+$. Denote $v_0=u_-'$ and $v_{n+1}=u_+'$. Then the path $[v_1,v_n]$ is contained in the path $[v_0,v_{n+1}]$; $v_0=v_1$ if $u_0\neq u_-'$, and $v_n=v_{n+1}$ if $u_{k+1}\neq u_+'$.
See Figure \ref{fig:chain} for an illustration.

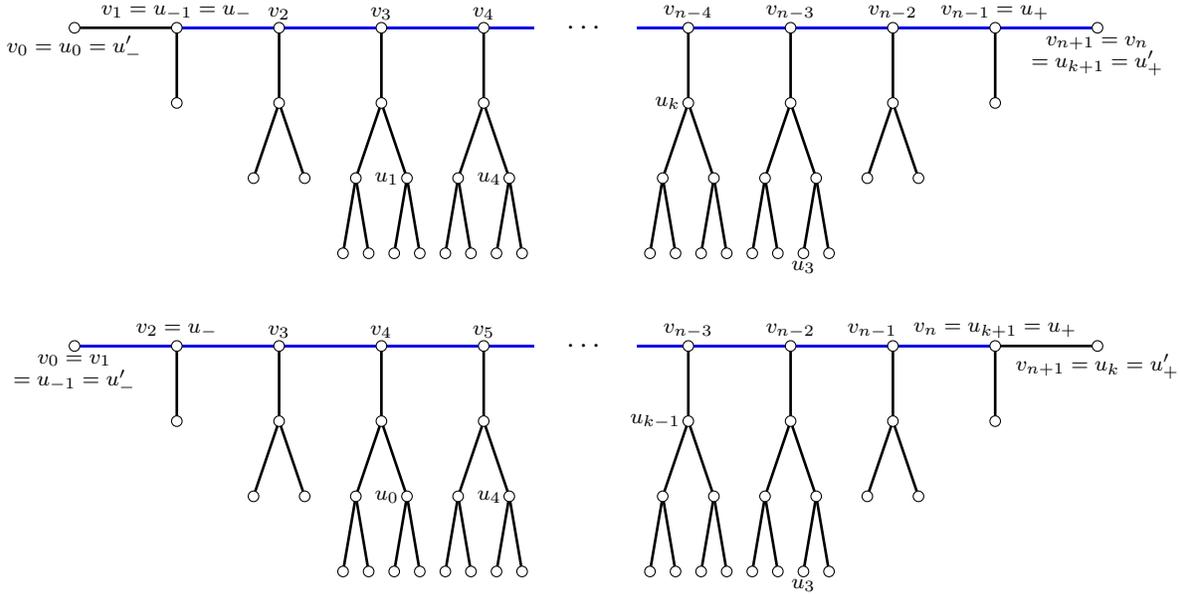
\begin{figure}
    \centering
    \begin{subfloat}
    \centering
\begin{tikzpicture}[line cap=round,line join=round,>=triangle 45,x=6.8cm,y=10cm]
\clip(-0.4,0.73) rectangle (2.15,1.15);

\draw [line width=1pt] (-0.2,1.1) -- (1.8,1.1);

\draw [line width=1pt, color=blue] (0,1.1) -- (1.8,1.1);

\foreach \i in {1,...,7}
{
\draw [line width=1pt] (\i*0.2,1.) -- (\i*0.2-0.05,0.9);
\draw [line width=1pt] (\i*0.2,1.) -- (\i*0.2+0.05,0.9);
}

\foreach \i in {0,...,8}
{
\draw [line width=1pt] (\i*0.2,1.1) -- (\i*0.2,1.);
}

\foreach \i in {3,...,12}
{
\draw [line width=1pt] (\i*0.1+0.075,0.8) -- (\i*0.1+0.05,0.9);
\draw [line width=1pt] (\i*0.1+0.025,0.8) -- (\i*0.1+0.05,0.9);
}

\foreach \i in {0,...,8}
{
\draw [fill=white] (\i*0.2,1) circle (2.0pt);
}

\foreach \i in {-1,...,9}
{
\draw [fill=white] (\i*0.2,1.1) circle (2.0pt);
}

\foreach \i in {1,...,7}
{
\draw [fill=white] (\i*0.2-0.05,0.9) circle (2.0pt);
\draw [fill=white] (\i*0.2+0.05,0.9) circle (2.0pt);
}

\foreach \i in {3,...,12}
{
\draw [fill=white] (\i*0.1+0.075,0.8) circle (2.0pt);
\draw [fill=white] (\i*0.1+0.025,0.8) circle (2.0pt);
}
\fill[line width=0.pt,color=white,fill=white]
(.7,-10) -- (.9,-10) -- (.9,10)-- (.7,10) -- cycle;

\begin{scriptsize}
\draw (-0.2,1.1) node[anchor=north]{$v_0=u_{0}=u_-'$};
\draw (0.0,1.1) node[anchor=south]{$v_1=u_{-1}=u_-$};
\draw (0.2,1.1) node[anchor=south]{$v_2$};
\draw (0.4,1.1) node[anchor=south]{$v_3$};
\draw (0.6,1.1) node[anchor=south]{$v_4$};
\draw (1.0,1.1) node[anchor=south]{$v_{n-4}$};
\draw (1.2,1.1) node[anchor=south]{$v_{n-3}$};
\draw (1.4,1.1) node[anchor=south]{$v_{n-2}$};
\draw (1.6,1.1) node[anchor=south]{$v_{n-1}=u_+$};
\draw (1.8,1.1) node[anchor=north]{$v_{n+1}=v_n$};
\draw (1.8,1.08) node[anchor=north]{$=u_{k+1}=u_+'$};
\draw (0.45,0.9) node[anchor=east]{$u_1$};
\draw (1.0,1.0) node[anchor=east]{$u_k$};
\draw (1.225,0.8) node[anchor=north]{$u_3$};
\draw (0.65,0.9) node[anchor=east]{$u_4$};
\end{scriptsize}
\draw (0.8,1.1) node[anchor=center]{$\cdots$};

\end{tikzpicture}
\end{subfloat}
    \begin{subfloat}
    \centering
\begin{tikzpicture}[line cap=round,line join=round,>=triangle 45,x=6.8cm,y=10cm]
\clip(-0.4,0.73) rectangle (2.15,1.15);

\draw [line width=1pt] (-0.2,1.1) -- (1.8,1.1);

\draw [line width=1pt, color=blue] (-0.2,1.1) -- (1.6,1.1);

\foreach \i in {1,...,7}
{
\draw [line width=1pt] (\i*0.2,1.) -- (\i*0.2-0.05,0.9);
\draw [line width=1pt] (\i*0.2,1.) -- (\i*0.2+0.05,0.9);
}

\foreach \i in {0,...,8}
{
\draw [line width=1pt] (\i*0.2,1.1) -- (\i*0.2,1.);
}

\foreach \i in {3,...,12}
{
\draw [line width=1pt] (\i*0.1+0.075,0.8) -- (\i*0.1+0.05,0.9);
\draw [line width=1pt] (\i*0.1+0.025,0.8) -- (\i*0.1+0.05,0.9);
}

\foreach \i in {0,...,8}
{
\draw [fill=white] (\i*0.2,1) circle (2.0pt);
}

\foreach \i in {-1,...,9}
{
\draw [fill=white] (\i*0.2,1.1) circle (2.0pt);
}

\foreach \i in {1,...,7}
{
\draw [fill=white] (\i*0.2-0.05,0.9) circle (2.0pt);
\draw [fill=white] (\i*0.2+0.05,0.9) circle (2.0pt);
}

\foreach \i in {3,...,12}
{
\draw [fill=white] (\i*0.1+0.075,0.8) circle (2.0pt);
\draw [fill=white] (\i*0.1+0.025,0.8) circle (2.0pt);
}

\fill[line width=0.pt,color=white,fill=white]
(.7,-10) -- (.9,-10) -- (.9,10)-- (.7,10) -- cycle;

\begin{scriptsize}
\draw (-0.2,1.1) node[anchor=north]{$v_0=v_1$};
\draw (-0.2,1.08) node[anchor=north]{$=u_{-1}=u_-'$};
\draw (0.0,1.1) node[anchor=south]{$v_2=u_-$};
\draw (0.2,1.1) node[anchor=south]{$v_3$};
\draw (0.4,1.1) node[anchor=south]{$v_4$};
\draw (0.6,1.1) node[anchor=south]{$v_5$};
\draw (1.0,1.1) node[anchor=south]{$v_{n-3}$};
\draw (1.2,1.1) node[anchor=south]{$v_{n-2}$};
\draw (1.36,1.1) node[anchor=south]{$v_{n-1}$};
\draw (1.6,1.1) node[anchor=south]{$v_{n}=u_{k+1}=u_+$};
\draw (1.8,1.1) node[anchor=north]{$v_{n+1}=u_k=u_+'$};
\draw (0.45,0.9) node[anchor=east]{$u_0$};
\draw (1.0,1.0) node[anchor=east]{$u_{k-1}$};
\draw (1.225,0.8) node[anchor=north]{$u_3$};
\draw (0.65,0.9) node[anchor=east]{$u_4$};
\end{scriptsize}
\draw (0.8,1.1) node[anchor=center]{$\cdots$};

\end{tikzpicture}
\end{subfloat}
\caption{
Illustrations of the path $[u_{-1}, u_{k+1}]=[v_1,v_n]$, indicated by the blue edges. 
The top one is the case where $u_0= u_-'$ and $u_{k+1}\neq u_+'$,
while the bottom one is the case where $u_0\neq u_-'$ and $u_{k+1}= u_+'$.
}  
\label{fig:chain}
\end{figure}
Our next goal is to expand the factors
\begin{equation}  \label{eq:two-sinh}
\sinh\left(2Y_{t_-}(u_-) + 2\zeta^{\bY_{t_-}}_{[u_-,u_-']}(u_-)\right), \quad \sinh\left(2Y_{t_+}(u_+) + 2\zeta^{\bY_{t_+}}_{[u_+,u_+']}(u_+)\right),
\end{equation}
which appeared in \eqref{eq:cov-chain2}, \eqref{eq:cov-chain3}, \eqref{eq:cov-chain4}.
For this we set up some notations for induced external fields along this path $[v_0,v_{n+1}]$.
For any $\bx \in \R^{\TT_R}$ and $1\le \ell \le n$, let
\begin{equation*}
    \begin{split}
        &\zeta^\bx_\ell := x(v_\ell)+\sum_{v\sim v_\ell, v\neq v_{\ell-1},v_{\ell+1}}\zeta^\bx_{v\to v_\ell}=x(v_\ell)+\zeta_{[v_0,v_{n+1}]}^\bx(v_\ell);\\
        &\zeta^\bx_{\ell-1\to \ell} := \zeta^\bx_{v_{\ell-1}\to v_\ell}, \quad \textnormal{and} \quad  \zeta^\bx_{\ell+1\to \ell} := \zeta^\bx_{v_{\ell+1}\to v_\ell}.
    \end{split}
\end{equation*}
Here we assume that $\zeta^\bx_{u_-'\to u_-'} = \zeta^\bx_{u_+'\to u_+'} = 0$.
In words, $\zeta^\bx_\ell$ is the total external and induced external field on $v_\ell$, except for those from $v_{\ell-1}$ and $v_{\ell+1}$, which are $\zeta^\bx_{\ell-1\to \ell}$ and $\zeta^\bx_{\ell+1\to \ell}$ respectively.
From these definitions we have that
\begin{equation}  \label{eq:Y2-}
2Y_{t_-}(u_-) + 2\zeta^{\bY_{t_-}}_{[u_-,u_-']}(u_-)
=
2\zeta^{\bY_{t_-}}_1 + 2\zeta^{\bY_{t_-}}_{2\to 1},
\end{equation}
when $u_0=u_-'$ (then $v_1=u_{-1}=u_-$, see top-left of Figure \ref{fig:chain}); and
\begin{equation}  \label{eq:Y2+}
2Y_{t_+}(u_+) + 2\zeta^{\bY_{t_+}}_{[u_+,u_+']}(u_+)
=
2\zeta^{\bY_{t_+}}_n + 2\zeta^{\bY_{t_+}}_{n-1\to n},
\end{equation}
when $u_k=u_+'$ (then $v_n=u_{k+1}=u_+$, see bottom-right of Figure \ref{fig:chain}).

We now study \eqref{eq:two-sinh}.
For each $1\le \ell < n$, by direct computation we can write
\[
\sinh(2\zeta^\bx_{\ell+1\to \ell}) = \frac{2\tanh(\beta_{v_\ell,v_{\ell+1}}) \cosh^2(\beta_{v_\ell,v_{\ell+1}}) \sinh(2\zeta^\bx_{\ell+1} + 2\zeta^\bx_{\ell+2\to \ell+1})}{\cosh(2\beta_{v_\ell,v_{\ell+1}})+\cosh(2\zeta^\bx_{\ell+1} + 2\zeta^\bx_{\ell+2\to \ell+1})},
\]
which implies that
\begin{equation}  \label{eq:zeta-l-ind-l+1}
\begin{split}
\sinh( 2\zeta^\bx_\ell+& 2\zeta^\bx_{\ell+1\to \ell})
=
\sinh(2\zeta^\bx_\ell)\cosh(2\zeta^\bx_{\ell+1\to \ell}) + \cosh(2\zeta^\bx_\ell)\sinh(2\zeta^\bx_{\ell+1\to \ell})
\\
=&
\sinh(2\zeta^\bx_\ell)\cosh(2\zeta^\bx_{\ell+1\to \ell})\\
&+
\frac{2\tanh(\beta_{v_\ell,v_{\ell+1}}) \cosh(2\zeta^\bx_\ell)\cosh^2(\beta_{v_\ell,v_{\ell+1}}) \sinh(2\zeta^\bx_{\ell+1} + 2\zeta^\bx_{\ell+2\to \ell+1})}{\cosh(2\beta_{v_\ell,v_{\ell+1}})+\cosh(2\zeta^\bx_{\ell+1} + 2\zeta^\bx_{\ell+2\to \ell+1})}.
\end{split}    
\end{equation}
Now for $1\le \ell < n$, we denote 
\begin{equation}  \label{eq:def-U}
U_\ell(\bx) := \frac{2\cosh(2\zeta^\bx_\ell)\cosh^2(\beta_{v_\ell,v_{\ell+1}})}{\cosh(2\beta_{v_\ell,v_{\ell+1}})+\cosh(2\zeta^\bx_{\ell+1} + 2\zeta^\bx_{\ell+2\to \ell+1})}.
\end{equation}
Thus we can write \eqref{eq:zeta-l-ind-l+1} as
\begin{multline}  \label{eq:zeta-one-step-plus}
\sinh(2\zeta^\bx_\ell+2\zeta^\bx_{\ell+1\to \ell}) \\
=
\sinh(2\zeta^\bx_\ell)\cosh(2\zeta^\bx_{\ell+1\to \ell}) + U_\ell(\bx)\tanh(\beta_{v_\ell,v_{\ell+1}}) \sinh(2\zeta^\bx_{\ell+1} + 2\zeta^\bx_{\ell+2\to \ell+1}).
\end{multline}
When $u_0=u_-'$, we have
\begin{equation}  \label{eq:expan-1}
\begin{split}
\sinh & \left(2Y_{t_-}(u_-) + 2\zeta^{\bY_{t_-}}_{[u_-,u_-']}(u_-)\right)
=
\sinh\left(2\zeta^{\bY_{t_-}}_1 + 2\zeta^{\bY_{t_-}}_{2\to 1}\right)
\\
=
&\sum_{\ell=1}^{n -1}
\sinh\left(2\zeta^{\bY_{t_-}}_\ell\right)\cosh\left(2\zeta^{\bY_{t_-}}_{\ell+1\to \ell}\right) A_{v_1,v_\ell}
\prod_{1\le j < \ell} U_j(\bY_{t_-})
 \\ &   +
\sinh\left(2\zeta^{\bY_{t_-}}_{n}+2\zeta^{\bY_{t_-}}_{n+1\to n}\right) A_{v_1,v_n}
\prod_{1\le j < n} U_j(\bY_{t_-}).
\end{split}
\end{equation}
where the first equality is by \eqref{eq:Y2-}, and the second equality is by repeated applying \eqref{eq:zeta-one-step-plus}.
Similarly, when $u_k=u_+'$, by \eqref{eq:Y2+} and repeated expansion, we have
\begin{equation}  \label{eq:expan-2}
\begin{split}
\sinh&\left(2Y_{t_+}(u_+) + 2\zeta^{\bY_{t_+}}_{[u_+,u_+']}(u_+)\right)
=
\sinh\left(2\zeta^{\bY_{t_+}}_n + 2\zeta^{\bY_{t_+}}_{n-1\to n}\right)
\\
\quad =&
\sum_{\ell=2}^{n}
\sinh\left(2\zeta^{\bY_{t_+}}_\ell\right)\cosh\left(2\zeta^{\bY_{t_+}}_{\ell-1\to \ell}\right) A_{v_\ell,v_n}
\prod_{\ell < j \le n} V_j(\bY_{t_+})
\\
\quad &+
\sinh\left(2\zeta^{\bY_{t_+}}_{1}+2\zeta^{\bY_{t_+}}_{0\to 1}\right) A_{v_1,v_n}
\prod_{1< j \le n} V_j(\bY_{t_+}),
\end{split}
\end{equation}
where
\begin{equation} \label{eq:def-V}
V_\ell(\bx) := \frac{2\cosh(2\zeta^{\bx}_\ell)\cosh^2(\beta_{v_\ell,v_{\ell-1}})}{\cosh(2\beta_{v_\ell,v_{\ell-1}})+\cosh(2\zeta^\bx_{\ell-1} + 2\zeta^\bx_{\ell-2\to \ell-1})}    
\end{equation}
for each $1< \ell \le n$ and $\bx \in \R^{\TT_R}$.
Now to prove Proposition \ref{prop:chain-expect} it now suffices to prove the following technical estimate.
Recall that $\theta=\tanh\beta$.
\begin{proposition}  \label{p:chain-reformulate}
For $1\le l \le r \le n$, 
let $C_l:\R^{\TT_R}\to\R$ satisfy either $C_l\equiv 1$ or $C_l(\bx) = \cosh(2\zeta^{\bx}_{l+1\to l}) $ for any $\bx \in \R^{\TT_R}$;
and $C_r:\R^{\TT_R}\to\R$ satisfy either $C_r\equiv 1$ or $C_r(\bx) = \cosh(2\zeta^{\bx}_{r-1\to r}) $ for any $\bx \in \R^{\TT_R}$.
Then we have
\begin{align*}
\E\left[
\frac{
\sinh\left(2\zeta^{\bY_{t_-}}_{l}\right)
\sinh\left(2\zeta^{\bY_{t_+}}_{r}\right)
C_l(\bY_{t_-}) C_r(\bY_{t_+})
}
{
\prod_{i=0}^{k+1}\oZ_{u_i,u_{i-1}}(\bY_{t_i})^2}
\prod_{1\le j < l} U_j(\bY_{t_-})
\prod_{r < j \le n} V_j(\bY_{t_+}) 
\right]    
\\
\le
e^{17t+d} \theta^{r-l} C^{n+k+\sum_{j=0}^{k+1}\bd(u_j, u_{j-1})}
\end{align*}
where $C$ is an absolute constant.
\end{proposition}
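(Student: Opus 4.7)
The plan is to prove Proposition \ref{p:chain-reformulate} by an inductive coupling argument along the backbone path $v_1,\ldots,v_n$, combined with the distributional representation $\bY_t \overset{d}{=} t\btau + \bB_t$ from \eqref{eq:bYlaw}. The key structural idea is that the product inside the expectation factorizes spatially into (i) contributions localized near each vertex $v_\ell$ of the backbone, and (ii) the two ``endpoint'' pieces $\sinh(2\zeta^{\bY_{t_-}}_l)$ and $\sinh(2\zeta^{\bY_{t_+}}_r)$, whose correlation produces the decisive $\theta^{r-l}$ factor.

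First, I would control the products $\prod_{j<l} U_j(\bY_{t_-})$ and $\prod_{j>r} V_j(\bY_{t_+})$. From \eqref{eq:def-U} and \eqref{eq:def-V}, each $U_j, V_j$ is of the form $2\cosh(2\zeta^\bx_j)\cosh^2(\beta)/(\cosh(2\beta)+\cosh(2\zeta^\bx_{j\pm 1}+\cdots))$. When $\beta$ is small the $\cosh(\beta)$ factors contribute $O(1)$ per step (accounting for part of the $C^n$), while the remaining $\cosh(2\zeta^\bx_j)$ factors are designed to be compensated by the corresponding $\oZ_{u_i,u_{i-1}}(\bY_{t_i})^{-2}$ denominators; indeed, from the expansion \eqref{eq:def:restrictedpartitionfunction:plain} of $\tZ_{u_i,u_{i-1}}$ and the analogous formula for $\oZ$, one obtains a lower bound on each $\oZ$ in terms of $\cosh$ of the induced fields attached along the corresponding path segment. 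The chains $[u_i,u_{i-1}]$ for $i\notin\{0,k+1\}$ are vertex-disjoint outside the backbone, so the compensation can be done segment by segment and yields the $C^{n+k+\sum_{j=0}^{k+1}\bd(u_j,u_{j-1})}$ factor.

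Second, and most importantly, the $\theta^{r-l}$ decay comes from the correlation of $\sinh(2\zeta^{\bY_{t_-}}_l)$ with $\sinh(2\zeta^{\bY_{t_+}}_r)$ through the shared Ising configuration $\btau$. Expanding $\zeta^{\bY_{t}}_\ell$ as a sum of induced external fields from the subtrees hanging off $v_\ell$ plus the external field $Y_t(v_\ell)$, and conditioning on the backbone spins $\tau_{v_l},\tau_{v_{l+1}},\ldots,\tau_{v_r}$, the two factors decouple since the off-backbone subtrees at $v_l$ and $v_r$ are vertex-disjoint. Their conditional expectations are odd functions of the corresponding boundary spins, so the cross-correlation reduces to $\E[\sinh(2\zeta^{\bY_{t_-}}_l)\sinh(2\zeta^{\bY_{t_+}}_r)] \lesssim \cov{\tau_{v_l}}{\tau_{v_r}} = A_{v_l,v_r} \le \theta^{r-l}$, using Lemma \ref{L:cov-formula}. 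To make this decoupling argument compatible with the denominators, I would express each $\oZ_{u_i,u_{i-1}}(\bY_{t_i})^2$ via the Belief Propagation factorization so that it splits into pieces localized on each backbone edge, each of which can be absorbed into the local step of the coupling.

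Third, the $e^{17t+d}$ factor absorbs the Gaussian and boundary contributions. Since $\bY_{t} = t\btau+\bB_t$, exponentials of the form $e^{\alpha \bY_t(v)}$ are controlled by the Gaussian moment generating function, giving $e^{O(t)}$ per vertex used, and a uniform bound of $\beta \le O(1)$ on each induced field from \eqref{eq:boundutov} together with the fact that at most $d$ boundary neighbors of $v_1$ or $v_n$ can contribute yields the $e^d$ factor. The main obstacle I anticipate is carrying out the conditional-independence argument rigorously once the partition function denominators $\oZ_{u_i,u_{i-1}}(\bY_{t_i})^2$ are present: these couple different spatial regions and different times $t_i$, and the reduction to a local per-edge bound requires the Belief Propagation decomposition to interact cleanly with the spin conditioning. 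Resolving this requires a careful inductive scheme that peels off one backbone edge at a time, each step contributing a factor of $C$ for $1\le j < l$ or $r < j \le n$ and a factor of $C\theta$ for $l \le j < r$, which is exactly the content of the bound.
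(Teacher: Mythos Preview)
Your proposal identifies the right source of the $\theta^{r-l}$ factor and the right role of the Gaussian and degree contributions, but the central step---extracting the correlation $\cov{\tau_{v_l}}{\tau_{v_r}}$ by conditioning on the backbone spins and invoking oddness---does not survive the denominators, and you essentially say so yourself. The obstruction is concrete: each $\oZ_{u_i,u_{i-1}}(\bY_{t_i})^{-2}$ involves induced fields along a path that may cross the backbone at several places and at a time $t_i\neq t_\pm$, so after conditioning on $(\tau_{v_l},\ldots,\tau_{v_r})$ the integrand is \emph{not} an odd function of $\tau_{v_l}$ (nor of $\tau_{v_r}$), and the reduction to $\cov{\tau_{v_l}}{\tau_{v_r}}$ collapses. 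Your proposed fix---a Belief-Propagation factorization that ``interacts cleanly with the spin conditioning''---does not address this, because the factorization is multiplicative in the field variables while the integrand couples them through $\sinh$ and $\cosh$ of sums at distinct times. Likewise, your compensation of the $\cosh(2\zeta_j^\bx)$ factors in $U_j$ by the $\oZ^{-2}$ denominators cannot work as stated, since the former are evaluated at $t_-$ and the latter at the $t_i$.

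The paper's resolution is substantially more intricate than a per-edge peeling. It rewrites the expectation as a \emph{difference} $\int(\cdot)\,d\mu_+^* - \int(\cdot)\,d\mu_-^*$ (split on the sign of $\zeta_l^{\bY_{t_-}}$) and constructs an explicit coupling $\Gamma^*$ of $\mu_\pm^*$ on $\JJ^2$. The base coupling $\Gamma$ \emph{reflects} both $\btau$ and $\bB$ on the subtree $\GG^{(l)}$ (so $\tau_v^-=-\tau_v^+$ and $B_t^-(v)=-B_t^+(v)$ there) and uses the optimal Ising coupling outside; this makes $\{\tau_{v_r}^-\neq\tau_{v_r}^+\}$ have probability $\le\theta^{r-l}$ under $\Gamma$. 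To pass from $\Gamma$ to $\Gamma^*$ one reweights by the denominators and the $U_j,W_j$ products inductively in $\ell$, setting $P_\ell=R_\ell^-\wedge R_\ell^+$ and controlling the defect $E_\ell=|R_\ell^+-R_\ell^-|$; the key estimate (Proposition~\ref{prop:key-esti}) shows $\int f E_b\prod P_\ell\,d\Lambda$ and $\int f\,\don[\tau_{v_{b+1}}^-\neq\tau_{v_{b+1}}^+]\prod P_\ell\,d\Lambda$ are bounded by $M_a^b\,\E_\Gamma[f]$ with $M_a^b=O((2\theta)^{b-a})$ times the combinatorial factors that become $C^{n+k+\sum\bd(u_j,u_{j-1})}$. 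For the $U_j$ and $V_j$ products the paper does not compensate against $\oZ^{-2}$: it telescopes $\prod_{j<l}U_j$ to a single $\cosh(2\zeta_1)$ (see \eqref{eq:bound-U}), and replaces $V_j$ by shifted quantities $W_j$ bounded by an absolute constant (see \eqref{eq:bound-W}). These are the missing ingredients your sketch would need.
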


Assuming this, we now prove Proposition \ref{prop:chain-expect}, by bounding each of \eqref{eq:cov-chain}, \eqref{eq:cov-chain2}, \eqref{eq:cov-chain3}, and \eqref{eq:cov-chain4}, using the expansions \eqref{eq:expan-1} and \eqref{eq:expan-2} and Proposition \ref{p:chain-reformulate}.
\begin{proof}[Proof of Proposition \ref{prop:chain-expect}]
In the case where $u_0\neq u_-'$ and $u_k\neq u_+'$, we have $\zeta^{\bY_{t_-}}_{1}=Y_{t_-}(u_-')$ and $\zeta^{\bY_{t_+}}_{n}=Y_{t_+}(u_+')$ by their definitions.
We apply Proposition \ref{p:chain-reformulate} with $l=1$ and $r=n$,
and bound \eqref{eq:cov-chain} by
\[
C^{n+k}(C\theta)^{\bd(u_-',u_+')+\bd(u_-,u_0)+\bd(u_k,u_+)+\sum_{i=1}^k\bd(u_i,u_{i-1})}
e^{17t+d} .
\]
For the case where $u_0\neq u_-'$ and $u_k= u_+'$,
using \eqref{eq:expan-2}, we apply Proposition \ref{p:chain-reformulate} for $l=1$ and each $1\le r\le n$ (note that $\zeta_{0\to 1}^{\bY_{t+}} = 0$ in this case), and bound \eqref{eq:cov-chain2} by
\[
n
C^{n+k}(C\theta)^{\bd(u_-',u_+)+\bd(u_-,u_0)+\sum_{i=1}^k\bd(u_i,u_{i-1})}
e^{17t+d}.
\]
Similarly, for the case where $u_0= u_-'$ and $u_k\neq u_+'$, using \eqref{eq:expan-1} and Proposition \ref{p:chain-reformulate} we can bound \eqref{eq:cov-chain3} by
\[
n
C^{n+k}(C\theta)^{\bd(u_-,u_+')+\bd(u_+,u_k)+\sum_{i=1}^k\bd(u_i,u_{i-1})}
e^{17t+d}.
\]
Finally, for the case where $u_0= u_-'$ and $u_k= u_+'$, by \eqref{eq:expan-1} and \eqref{eq:expan-2}, and using Proposition \ref{p:chain-reformulate} for each $1\le l < r\le n$, we can bound
\eqref{eq:cov-chain4} by
\begin{equation} \label{eq:cov-chain4-pf}
\begin{split} 
&
\frac{n(n-1)}{2}
C^{n+k}(C\theta)^{\bd(u_-,u_+)+\sum_{i=1}^k\bd(u_i,u_{i-1})}
e^{17t+d}
+\sum_{1\le r\le l \le n} \theta^{n-r+l-1+\sum_{i=1}^k\bd(u_i,u_{i-1})}
\\
&\times
\E\left[
e^{2|\zeta^{\bY_{t_-}}_l| + 2|\zeta^{\bY_{t_-}}_{l+1\to l}|
+2|\zeta^{\bY_{t_+}}_r| + 2|\zeta^{\bY_{t_+}}_{r-1\to r}|}
\prod_{1\le j < l} U_j(\bY_{t_-})
\prod_{r < j \le n} V_j(\bY_{t_+})
\right].
\end{split}    
\end{equation}
We next bound the expectation in the second line.
For any $\bx\in\R^{\TT_R}$, we have 
$\frac{\cosh(2\zeta^\bx_{j})}{\cosh(2\zeta^\bx_{j} + 2\zeta^\bx_{j+1\to j})} < 2\cosh(2\zeta^\bx_{j+1\to j}) \le 2\cosh(2\beta)$ for each $1\le j < l$, by \eqref{eq:boundutov}.
So by \eqref{eq:def-U} and \eqref{eq:def-V} we have
\begin{equation}  \label{eq:bound-U}
\prod_{1\le j < l} U_j(\bx)
<
\prod_{1\le j < l}
\frac{2\cosh^2(\beta)\cosh(2\zeta^\bx_{j})}{\cosh(2\zeta^\bx_{j+1} + 2\zeta^\bx_{j+2\to j+1})}
<
\cosh(2\zeta_1^\bx)
(4\cosh^2(\beta)\cosh(2\beta))^{l-1},    
\end{equation}
and similarly
\[
\prod_{r < j \le n} V_j(\bx) 
<
\prod_{r < j \le n}
\frac{2\cosh^2(\beta)\cosh(2\zeta^\bx_{j})}{\cosh(2\zeta^\bx_{j-1} + 2\zeta^\bx_{j-2\to j-1})}
<
\cosh(2\zeta_n^\bx)
(4\cosh^2(\beta)\cosh(2\beta))^{n-r}.
\]
Thus the second line of \eqref{eq:cov-chain4-pf} can be bounded by
\[
\begin{split}
&(4\cosh^2(\beta)\cosh(2\beta))^{2n}
\E\left[
e^{2|\zeta^{\bY_{t_-}}_l| + 2|\zeta^{\bY_{t_-}}_{l+1\to l}|
+2|\zeta^{\bY_{t_+}}_r| + 2|\zeta^{\bY_{t_+}}_{r-1\to r}|
+2|\zeta^{\bY_{t_-}}_1|
+2|\zeta^{\bY_{t_+}}_n|
}
\right] \\
<&
C^n \E\left[e^{4\beta + 8t + 2|B_{t_{-}}(v_l)|
+ 2|B_{t_{-}}(v_1)|
+ 2|B_{t_{+}}(v_n)|
+ 2|B_{t_{+}}(v_r)| + 8(d-1)\beta
}\right]
<C^{n+1}e^{25t+d}.
\end{split}
\]
Now we conclude that \eqref{eq:cov-chain4} can be bounded by
\[
n^2
C^{n+k}(C\theta)^{\bd(u_-,u_+)+\sum_{i=1}^k\bd(u_i,u_{i-1})}
e^{25t+d}.
\]
Then our conclusion follows in each case. 
\end{proof}

\subsection{Inductive construction of coupling}\label{subsec:ind-coup:ind-coup}
In this and the next subsection we prove Proposition~\ref{p:chain-reformulate}, by developing an inductive coupling scheme to study the partition functions in the main inequality.

We consider the space $\JJ=\{\pm 1\}^{\TT_R}\times C([0,\infty),\R)^{\TT_R}$ with coordinates $(\btau, \bB)$, and regard $\bY$ as a function of $\btau, \bB$ defined via \eqref{eq:bYlaw} on $\JJ$.
We let $\mu$ be the probability measure on the  space $\JJ$ such that for $(\btau, \bB) \sim \mu$, $\btau$ is sampled from $\pi=\pi^{\gamma,\boo}$ and $\bB$ are independent Brownian motions.
Let $1\le l \le r\le n$ be given, and set $\mu_\pm$ be the measures on $\JJ$ with $d\mu_\pm = \don[\pm\zeta^{\bY_{t_-}}_l\ge 0] d\mu$.

We let $\mu^*$ be another measure on $\JJ$, defined as 
\[
d\mu^* = 
\frac{
|\sinh(2\zeta^{\bY_{t_-}}_{l})|
C_l(\bY_{t_-})C_r(\bY_{t_+})
}
{\cosh(2\zeta^{\bY_{t_+}}_n)
\prod_{i=0}^{k+1}\oZ_{u_i,u_{i-1}}(\bY_{t_i})^2}
\prod_{1\le j < l} U_j(\bY_{t_-})
\prod_{r < j \le n} V_j(\bY_{t_+})
d\mu.
\]
We let $\mu^*_\pm$ be the measure on $\JJ$ with $d\mu^*_\pm=\don[\pm\zeta^{\bY_{t_-}}_l\ge 0] d\mu^*$.
To prove Proposition \ref{p:chain-reformulate}, it now suffices to bound
\begin{equation}  \label{eq:chain-refor-mu-diff}
\int \sinh(2\zeta^{\bY_{t_+}}_{r})
\cosh(2\zeta^{\bY_{t_+}}_{n})
d\mu^*_+ - \int \sinh(2\zeta^{\bY_{t_+}}_{r})
\cosh(2\zeta^{\bY_{t_+}}_{n})
d\mu^*_-.    
\end{equation}
We leave out the factor of $\sinh(2\zeta^{\bY_{t_+}}_{r})
\cosh(2\zeta^{\bY_{t_+}}_{n})$ for technical reasons.
To bound \eqref{eq:chain-refor-mu-diff}, we construct a coupling $\Gamma^*$ of $\mu_+^*$ and $\mu_-^*$.
In words, denote the coordinates of $\JJ^2$ as $(\btau_-,\bB_-), (\btau_+,\bB_+)$, we will construct a measure $\Gamma^*$ on it, such that for any measurable function $f$ on $\JJ$ we have
$\int f(\btau_-, \bB_-) d\Gamma^* = \int f d\mu^*_-$ and $\int f(\btau_+, \bB_+) d\Gamma^* = \int f d\mu^*_+$.
Denote $\bY^-, \bY^+$ as the two copies of $\bY$,
regarded as functions of $(\btau_-, \bB_-)$ and $(\btau_+, \bB_+)$.
With such a coupling we would like to bound
\begin{equation}  \label{eq:couple-bound}
\int \left\{\sinh(2\zeta^{\bY_{t_+}^+}_{r}) \cosh(2\zeta^{\bY_{t_+}^+}_{n})
- \sinh(2\zeta^{\bY_{t_+}^-}_{r}) \cosh(2\zeta^{\bY_{t_+}^-}_{n})
\right\}d\Gamma^*.    
\end{equation}

We construct such coupling $\Gamma^*$ inductively.
For each $l\le \ell < r$, we denote $\GG^{(\ell)}$ as the subgraph obtained by breaking the edge $(v_\ell,v_{\ell+1})$ in $\TT_R$, and taking the connected component containing $v_1$.
We also let $\GG^{(r)}=\TT_R$. See Figure \ref{fig:indu-G} for an illustration.

\begin{figure}
    \centering
\begin{tikzpicture}[line cap=round,line join=round,>=triangle 45,x=8cm,y=10cm]
\clip(-0.4,0.63) rectangle (1.7,1.15);

\draw [line width=1pt] (0.2,1) -- (1.2,1);

\draw [line width=1pt] (0.2,1.) -- (0,0.9);
\draw [line width=1pt] (1.2,1.) -- (1.4,0.9);

\foreach \i in {1,...,6}
{
\draw [line width=1pt] (\i*0.2,.9) -- (\i*0.2,1.);
}

\foreach \i in {0,...,7}
{
\draw [line width=1pt] (\i*0.2,.9) -- (\i*0.2-0.05,0.8);
\draw [line width=1pt] (\i*0.2,.9) -- (\i*0.2+0.05,0.8);
}

\foreach \i in {-1,...,14}
{
\draw [line width=1pt] (\i*0.1+0.075,0.7) -- (\i*0.1+0.05,0.8);
\draw [line width=1pt] (\i*0.1+0.025,0.7) -- (\i*0.1+0.05,0.8);
}

\foreach \i in {1,...,6}
{
\draw [fill=white] (\i*0.2,1) circle (2.0pt);
}

\foreach \i in {0,...,7}
{
\draw [fill=white] (\i*0.2,.9) circle (2.0pt);
}

\foreach \i in {0,...,7}
{
\draw [fill=white] (\i*0.2-0.05,0.8) circle (2.0pt);
\draw [fill=white] (\i*0.2+0.05,0.8) circle (2.0pt);
}

\foreach \i in {-1,...,14}
{
\draw [fill=white] (\i*0.1+0.075,0.7) circle (2.0pt);
\draw [fill=white] (\i*0.1+0.025,0.7) circle (2.0pt);
}

\draw [dashed] plot coordinates {(0.3,1.04) (0.3,0.67) (-0.1,0.67) (-0.1,1.04) (0.3,1.04)};
\draw [dashed] plot coordinates {(0.5,1.1) (0.5,0.65) (-0.12,0.65) (-0.12,1.1) (0.5,1.1)};

\begin{scriptsize}
\draw (-0.0675,0.7) node[anchor=south east]{$v_{l-3}$};
\draw (-0.05,0.8) node[anchor=south east]{$v_{l-2}$};
\draw (0,0.9) node[anchor=south east]{$v_{l-1}$};
\draw (0.2,1) node[anchor=south]{$v_l$};
\draw (0.4,1) node[anchor=south]{$v_{l+1}$};
\draw (0.6,1) node[anchor=south]{$v_{l+2}$};
\draw (0.8,1) node[anchor=south]{$v_{l+3}$};
\draw (0.05,0.8) node[anchor=east]{$u$};
\draw (0.4,0.9) node[anchor=east]{$v$};
\draw (1.2,1) node[anchor=south]{$v_r$};

\draw (0,1.1) node[anchor=north]{$\GG^{(l+1)}$};
\draw (0,1.04) node[anchor=north]{$\GG^{(l)}$};

\draw (0.7,0.64) node[anchor=south]{$\cdots$};

\end{scriptsize}

\end{tikzpicture}
\caption{
An illustration of the path from $v_l$ to $v_r$, and the subgraphs $\GG^{(\ell)}$.
The vertex $u$ has $\rho(u)=l$ and the vertex $v$ has $\rho(v)=l+1$.
The distance $\bbd_{l+3}^r(u,v)=2$ since $\rho(u)=l$ and $\rho(v)=l+1$.
}  
\label{fig:indu-G}
\end{figure}
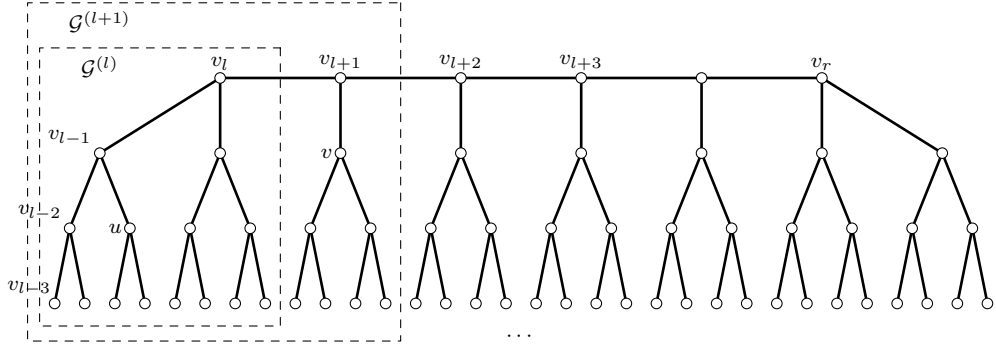

Our general strategy is (1) to define a version of $\mu_\pm^*$ that only depends on the information on $\GG^{(\ell)}$, denoted as $\mu_\pm^{(\ell)}$; (2) then we shall construct a coupling $\Gamma^{(\ell)}$ of $\mu_+^{(\ell)}$ and $\mu_-^{(\ell)}$ inductively in $\ell$, and $\Gamma^{(r)}$ would be the desired $\Gamma^*$.

\noindent\textbf{The notation $W_j$.} We introduce the notation $W_j$ for $r<j\le n$, which is a slightly different version of $V_j$.
For any $\bx \in \R^{\TT_R}$ we denote
\[
W_{r+1}(\bx) := \frac{2\cosh^2(\beta_{v_{r+1},v_{r}})}{\cosh(2\beta_{v_{r+1},v_{r}})+\cosh(2\zeta^\bx_{r} + 2\zeta^\bx_{r-1\to r})},\]
and for each $r+1<j\le n$ we denote 
\[
W_j(\bx) := \frac{2\cosh(2\zeta^{\bx}_{j-1})\cosh^2(\beta_{v_j,v_{j-1}})}{\cosh(2\beta_{v_j,v_{j-1}})+\cosh(2\zeta^\bx_{j-1} + 2\zeta^\bx_{j-2\to j-1})}.
\]
We note that this is slightly different from $V_j(\bx)$; in fact, we have $W_{r+1}(\bx) = (\cosh(2\zeta^{\bx}_{r+1}))^{-1}V_{r+1}(\bx)$ and 
$W_j(\bx) = (\cosh(2\zeta^{\bx}_j))^{-1}\cosh(2\zeta^{\bx}_{j-1})V_j(\bx)$ for $r+1<j\le n$.
Such $W_j$ is easier to use (than $V_j$) in the inductive arguments, since it can be bounded by a constant: $\forall r<j\le n$, we have
\begin{equation}  \label{eq:bound-W}
W_j(\bx)<\frac{2\cosh^2(\beta)\cosh(2\zeta^\bx_{j-1})}{\cosh(2\zeta^\bx_{j-1} + 2\zeta^\bx_{j-2\to j-1})}
<4\cosh^2(\beta)\cosh(2\zeta^\bx_{j-2\to j-1})
\le 4\cosh^2(\beta)\cosh(2\beta)
,
\end{equation}
since $|\zeta^\bx_{j-2\to j-1}| \le \beta$ (by \eqref{eq:boundutov}).
Now we can write
\[
d\mu^* = 
\frac{
|\sinh(2\zeta^{\bY_{t_-}}_{l})|
C_l(\bY_{t_-})C_r(\bY_{t_+})
}
{
\prod_{i=0}^{k+1}\oZ_{u_i,u_{i-1}}(\bY_{t_i})^2}
\prod_{1\le j < l} U_j(\bY_{t_-})
\prod_{r < j \le n} W_j(\bY_{t_+})
d\mu.
\]

\noindent\textbf{Definition of $\mu_\pm^{(\ell)}$.} For each $l\le \ell \le r$ and any $\bx\in\R^{\TT_R}$, 
let $\bx^{(\ell)}\in\R^{\TT_R}$ be the vector given by $\bx^{(\ell)}(v')=\bx(v')$ for $v'\in\GG^{(\ell)}$, and $\bx^{(\ell)}(v')=0$ for $v'\not\in\GG^{(\ell)}$.
Then we define $\oZ_{u,v}^{(\ell)}$ as the function such that

\begin{itemize}
    \item if at least one of $u,v \in \GG^{(\ell)}$, then $\oZ_{u,v}^{(\ell)}(\bx)=\oZ_{u,v}(\bx^{(\ell)})$,
    \item if $u,v \notin \GG^{(\ell)}$, then $\oZ_{u,v}^{(\ell)}(\bx)\equiv 1$.
\end{itemize}
Also, we define $C_l^{(\ell)}(\bx) := C_l(\bx^{(\ell)})$, and $U_j^{(\ell)}(\bx):=U_j(\bx^{(\ell)})$, for any $1\le j < l$;
and 
$C_r^{(\ell)}(\bx) := C_r(\bx^{(\ell)})$, and $W_j^{(\ell)}(\bx):=W_j(\bx^{(\ell)})$, for any $r < j \le n$.
We write $\mu^{(\ell)}$ as the measure on $\JJ$, with 
\[
d\mu^{(\ell)} =
\frac{
|\sinh(2\zeta^{\bY_{t_-}}_{l})|
C_l^{(\ell)}(\bY_{t_-})C_r^{(\ell)}(\bY_{t_+})
}
{
\prod_{i=0}^{k+1}\oZ_{u_i,u_{i-1}}^{(\ell)}(\bY_{t_i})^2}
\prod_{1\le j < l} U_j^{(\ell)}(\bY_{t_-})
\prod_{r < j \le n} W_j^{(\ell)}(\bY_{t_+})
d\mu,
\]
and we let $\mu^{(\ell)}_\pm$ be the measure on $\JJ$ with $d\mu^{(\ell)}_\pm=\don[\pm\zeta^{\bY_{t_-}}_l\ge 0] d\mu^{(\ell)}$.
From these definitions we have $\mu^{(r)}=\mu^*$ and $\mu^{(r)}_\pm=\mu^*_\pm$.

\vspace{2mm}

\noindent\textbf{Construction of $\Gamma$ and $\Gamma^{(l)}$.}
We start by constructing $\Gamma$, a probability measure on $\JJ^2$, and a coupling of $2\mu_\pm$ (recall that $d\mu_\pm = \don[\pm\zeta^{\bY_{t_-}}_l\ge 0] d\mu$; we multiplied the measures $\mu_{\pm}$ by the scalar 2 for convenience, since $2\mu_\pm$ are probability measures).

We define $\Gamma$ via defining random variables $\btau^\pm, \bB^\pm$, as follows.
We first take $(\tau_{v}^+, (B_t^+(v))_{t\ge 0})_{v\in \GG^{(l)}}$ as sampled from $2\mu_+$;
and we take $\tau_{v}^- = - \tau_{v}^+, B_t^-(v) = -B_t^+(v)$ for each $v\in\GG^{(l)}$ and $t\ge 0$.

Next we regard $\TT_R$ as a tree rooted at $v_l$ and inductively define $\tau_{v}^\pm$ for each $v \not\in \GG^{(l)}$.
For any $v \not\in \GG^{(l)}$, denote $v'$ as its parent in this $v_l$-rooted tree.
Given $\tau_{v'}^\pm$, we need to take $\tau_{v}^{\pm}$ to be $\tau_{v}^{\pm}=\tau_{v'}^{\pm}$ with probability $\frac{e^{\beta_{v,v'}}}{e^{\beta_{v,v'}}+e^{-{\beta_{v,v'}}}}$, according to the law of the Ising model.
To couple $\tau_{v}^+$ and $\tau_{v}^-$, if $\tau_{v'}^-=\tau_{v'}^+$ we set  $\tau_{v}^-=\tau_{v}^+$ with probability $1$;
and if $\tau_{v'}^-\neq\tau_{v'}^+$, we have $\tau_{v'}^- = \tau_{v}^-\neq\tau_{v}^+ = \tau_{v'}^+$
with probability $\tanh(\beta_{v,v'})$, and otherwise $\tau_{v}^{\pm} = -$ or $\tau_{v}^{\pm} = +$, each with probability $\frac{1}{2}(1-\tanh( \beta_{v,v'}) )$. 
We let $(B_t^-(v))_{t\ge 0}=(B_t^+(v))_{t\ge 0}$
be the same Brownian motion (but independent for each $v\not\in \GG^{(l)}$).

From this we have that the marginal distributions of $(\btau^-, \bB^-)$ and $(\btau^+, \bB^+)$ are given by $2\mu_-$ and $2\mu_+$, respectively.

To get $\Gamma^{(l)}$, we reweight $\Gamma$.
Note under $\Gamma$, almost surely we have $\sinh(2\zeta^{\bY_{t_-}^-}_{l})=-\sinh(2\zeta^{\bY_{t_-}^+}_{l})$, 
$C_l^{(l)}(\bY_{t_-}^-)=C_l^{(l)}(\bY_{t_-}^+)$, 
$C_r^{(l)}(\bY_{t_-}^-)=C_r^{(l)}(\bY_{t_-}^+)$;
and 
$U_j^{(l)}(\bY_{t_-}^-)=U_j^{(l)}(\bY_{t_-}^+)$ for each $1\le j <l$,
$W_j^{(l)}(\bY_{t_-}^-)=W_j^{(l)}(\bY_{t_-}^+)$ for each $r< j \le n$.
So we can define $\Gamma^{(l)}$ as
\[
\begin{split}
d\Gamma^{(l)}
&=
\frac{1}{2}
|\sinh(2\zeta^{\bY_{t_-}^-}_{l})|
C_l^{(l)}(\bY_{t_-}^-)C_r^{(l)}(\bY_{t_+}^-)
\prod_{1\le j < l} U_j^{(l)}(\bY_{t_-}^-)
\prod_{r< j \le n} W_j^{(l)}(\bY_{t_+}^-)
d\Gamma \\
&=
\frac{1}{2}
|\sinh(2\zeta^{\bY_{t_-}^+}_{l})|
C_l^{(l)}(\bY_{t_-}^+)C_r^{(l)}(\bY_{t_+}^+)
\prod_{1\le j < l} U_j^{(l)}(\bY_{t_-}^+)
\prod_{r< j \le n} W_j^{(l)}(\bY_{t_+}^+)
d\Gamma.
\end{split}
\]

\vspace{2mm}

\noindent\textbf{From $\Gamma^{(\ell)}$ to $\Gamma^{(\ell+1)}$.}
For $l\le \ell < r$, we assume that we have a measure $\Gamma^{(\ell)}$ on the space of $\JJ^2$, such that it is a coupling of $\mu^{(\ell)}_\pm$.
In fact, we can make the following stronger assumption of $\Gamma^{(\ell)}$.
Let $\FF^\pm_\ell$ be the sigma algebra of $\JJ$, generated by $(\tau^\pm_v)_{v\in\GG^{(\ell)}}, (B^\pm_t(v))_{t\ge 0, v\in\GG^{(\ell)}}$; and $\FF_\ell$ be the sigma algebra of $\JJ^2$, generated by $\{S_+\times \JJ:S_+\in\FF^+_\ell\} \cup  \{\JJ\times S_-:S_-\in\FF^-_\ell\}$.
Then $\Gamma^{(\ell)}$ satisfies the following conditions:
\begin{itemize}
    \item For any $S_+\in\FF^+_\ell$ we have $\Gamma^{(\ell)}(S_+\times\JJ) = \mu_+^{(\ell)}(S_+)$;
and for any $S_-\in\FF^-_\ell$ we have $\Gamma^{(\ell)}(\JJ\times S_-) = \mu_-^{(\ell)}(S_-)$.
    \item Letting $\oGamma^{(\ell)}:=\frac{\Gamma^{(\ell)}}{|\Gamma^{(\ell)}|}$, which is a probability measure,
the conditional distribution $\oGamma^{(\ell)}(\cdot\mid \FF_\ell)$ is the same as $\Gamma(\cdot\mid \FF_\ell )$.
\end{itemize}
Now we construct $\Gamma^{(\ell+1)}$. Denote
\begin{equation} \label{eq:defn-R}
R_\ell^\pm := 
\prod_{i=0}^{k+1}\frac{\oZ_{u_i,u_{i-1}}^{(\ell)}(\bY_{t_i}^\pm)^2}
{\oZ_{u_i,u_{i-1}}^{(\ell+1)}(\bY_{t_i}^\pm)^2}
\frac{C_l^{(\ell+1)}(\bY_{t_-}^\pm)}{C_l^{(\ell)}(\bY_{t_-}^\pm)}
\frac{C_r^{(\ell+1)}(\bY_{t_-}^\pm)}{C_r^{(\ell)}(\bY_{t_-}^\pm)}
\prod_{1\le j <l}
\frac{U_j^{(\ell+1)}(\bY_{t_-}^\pm)}{U_j^{(\ell)}(\bY_{t_-}^\pm)}
\prod_{r< j \le n}
\frac{W_j^{(\ell+1)}(\bY_{t_-}^\pm)}{W_j^{(\ell)}(\bY_{t_-}^\pm)}
.    
\end{equation}
Then we have $d\mu^{(\ell+1)}_\pm = R_\ell^\pm d\mu^{(\ell)}_\pm$; i.e., $R_\ell^\pm$ is the `reweight' from $\mu^{(\ell)}_\pm$ to $\mu^{(\ell+1)}_\pm$.
We also have that $R_\ell^\pm$ is $\FF_{\ell+1}^{\pm}$ measurable, and also $\FF_{\ell+1}$ measurable.

Let $P_\ell = R_\ell^- \wedge R_\ell^+$, and define $\Gamma^{(\ell+1)}$ as the measure with $d\Gamma^{(\ell+1)}=P_\ell d\Gamma^{(\ell)}+d\Xi^{(\ell)}$, where $\Xi^{(\ell)}$ is a measure on
$\JJ^2$, given by the following conditions.
For any $S_+\in \FF^+_{\ell+1}$ and $S_-\in \FF^-_{\ell+1}$, we have
\[
\Xi^{(\ell)} (S_+\times \JJ) = \int (R_\ell^+ - P_\ell)\don[S_+\times \JJ] d\Gamma^{(\ell)},\quad
\Xi^{(\ell)} (\JJ\times S_-) = \int (R_\ell^- - P_\ell)\don[\JJ\times S_-] d\Gamma^{(\ell)}.
\]
We also require that the conditional distribution $\oXi^{(\ell)}(\cdot\mid \FF_{\ell+1})$ is the same as $\Gamma(\cdot\mid \FF_{\ell+1} )$, where $\oXi^{(\ell)}:=\frac{\Xi^{(\ell)}}{|\Xi^{(\ell)}|}$ is a probability measure on $\JJ^2$.
Such $\Xi^{(\ell)}$ exists (although not unique) since $R_\ell^+ - P_\ell, R_\ell^- - P_\ell\ge 0$, and
\[
\int (R_\ell^+ - P_\ell) d\Gamma^{(\ell)} = \int R_\ell^+ d\mu^{(\ell)}_+ -\int P_\ell d\Gamma^{(\ell)} = \int R_\ell^- d\mu^{(\ell)}_- -\int P_\ell d\Gamma^{(\ell)} = \int (R_\ell^- - P_\ell) d\Gamma^{(\ell)}.
\]

We now check $\Gamma^{(\ell+1)}$ satisfies the desired conditions.
From the definitions, for any $S_+\in\FF^+_{\ell+1}$ we have
\[
\begin{split}
\Gamma^{(\ell+1)}(S_+\times\JJ)
=&
\int P_\ell \don[S_+\times\JJ] d\Gamma^{(\ell)} + \Xi^{(\ell)}(S_+\times\JJ)
=
\int R^+_\ell \don[S_+\times\JJ] d\Gamma^{(\ell)}
\\
=&
\int R^+_\ell \don[S_+] d\mu_+^{(\ell)}
=
\int \don[S_+] d\mu_+^{(\ell+1)}
=
\mu_+^{(\ell+1)}(S_+).
\end{split}
\]
Similarly,  for any $S_-\in\FF^-_{\ell+1}$ we have $\Gamma^{(\ell+1)}(\JJ\times S_-) = \mu_-^{(\ell+1)}(S_+)$.

Also, since $P_\ell$ is $\FF_{\ell+1}$ measurable, and $\oXi^{(\ell)}(\cdot\mid \FF_{\ell+1})$ and $\oGamma^{(\ell)}(\cdot\mid \FF_{\ell+1})$ are the same as $\Gamma(\cdot\mid \FF_{\ell+1} )$, we have that $\oGamma^{(\ell+1)}(\cdot\mid \FF_{\ell+1})$ is the same as $\Gamma(\cdot\mid \FF_{\ell+1} )$ as well.
By principle of induction, we have defined $\Gamma^{(\ell)}$ for any $l\le \ell \le r$.

Finally, we just let $\Gamma^*=\Gamma^{(r)}$.

\subsection{Key estimates on coupling and inductive expansion}\label{subsec:ind-coup:estims}
It now remains to bound \eqref{eq:couple-bound} for the $\Gamma^*$ we constructed.
For this we need the following key estimate, to state which we set up some further notations.

Take $E_\ell = |R^+_\ell-R^-_\ell|$ for $l\le \ell < r$, where $R^\pm_\ell$ are the reweights \eqref{eq:defn-R}.
For any vertex $u\in \TT_R$, let $\rho(u)\in\{l,\ldots,r\}$, such that $u\in\GG^{(\rho(u))}$ but $u\not\in\GG^{(\rho(u)-1)}$ (assuming that $\GG^{(l-1)}=\emptyset$).
In other words, $\rho(u)=\argmin_{l\le\ell\le r}\bd(v_\ell, u)$; i.e. $\rho(u)$ is the `projection' of $u$ onto $[v_l, v_r]$.
For any $u,v \in \TT_R$, and $l\le a\le b \le r$, we let $\bbd_a^b(u, v)$ be the distance between the intervals $[a, b]$ and $[\rho(u), \rho(v)]$ (if $\rho(u)\le\rho(v)$) or $[\rho(v), \rho(u)]$ (if $\rho(v)\le\rho(u)$).
We also denote $\bbd_a(u,v):=\bbd_a^a(u,v)$.
See Figure \ref{fig:indu-G} for an illustration of these notations.
Recall that $P_\ell = R_\ell^- \wedge R_\ell^+$ for $l\le \ell < r$.
\begin{proposition}  \label{prop:key-esti}
Let $l-1\le a < b < r$, and $f$ be a non-negative function of $(B^\pm_t(v))_{t\ge 0, v\not\in\GG^{(a+1)}}$, and let $\Lambda$ be any probability measure on $\JJ^2$, such that $\Lambda(\cdot\mid \FF_{a+1}) = \Gamma(\cdot\mid \FF_{a+1})$.
Then we have
\begin{equation}  \label{eq:key-est1}
\int fE_b\prod_{a<\ell<b}P_\ell d\Lambda
<
M_a^b \E_\Gamma[f],
\end{equation}
\begin{equation}  \label{eq:key-est2}
\int f\don[\tau_{v_{b+1}}^- \neq \tau_{v_{b+1}}^+]\prod_{a<\ell\le b}P_\ell d\Lambda<M_a^b \E_\Gamma[f], 
\end{equation}
where
\[
M_a^b:=C(2\theta)^{b-a}
\prod_{i=0}^{k+1}
(1 + C(2\theta)^{\bbd_{a+1}^{b+1}(u_i,u_{i-1})})
(1+C(2\theta)^{a+2-l})^{l}
(1+C(2\theta)^{r-b-1})^{n-r+1}
\]
and $C$ is an absolute constant.
In particular, this implies that we have
\begin{equation} \label{eq:key-est3}
\begin{split}
  \left.\begin{aligned}
  \int fE_b\prod_{a<\ell<b}P_\ell d\Xi^{(a)} & \\
  \int f\don[\tau_{v_{b+1}}^- \neq \tau_{v_{b+1}}^+]\prod_{a<\ell\le b}P_\ell d\Xi^{(a)} &
  \end{aligned}\right\rbrace    
  &<M_a^b \E_\Gamma[f] \Xi^{(a)}(\JJ^2) \\& \le M_a^b \E_\Gamma[f] \int E_{a}d\Gamma^{(a)} = M_a^b\int fE_{a}d\Gamma^{(a)},
\end{split}
\end{equation}
when $a\ge l$; and similarly
\begin{equation} \label{eq:key-est5}
  \left.\begin{aligned}
  \int fE_b\prod_{a<\ell<b}P_\ell d\Gamma^{(a+1)} & \\
  \int f\don[\tau_{v_{b+1}}^- \neq \tau_{v_{b+1}}^+]\prod_{a<\ell\le b}P_\ell d\Gamma^{(a+1)} &
  \end{aligned}\right\rbrace    
  <M_a^b \E_\Gamma[f] \Gamma^{(a+1)}(\JJ^2)  = M_a^b\int fd\Gamma^{(a+1)}.
\end{equation}
Here we used that $\oXi^{(a)}(\cdot\mid \FF_{a+1})$ (when $a\ge l$), $\oXi^{(a+1)}(\cdot\mid \FF_{a+1})$ and $\oGamma^{(a)}(\cdot\mid \FF_{a+1} )$ are the same as $\Gamma(\cdot\mid \FF_{a+1} )$,
and that $E_a$ is $\FF_{a+1}$ measurable.
\end{proposition}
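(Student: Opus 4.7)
The plan is to exploit two structural properties of the inductive coupling: the disagreement percolation under $\Gamma$ and the contraction of Belief Propagation for induced external fields. Under $\Gamma$, when $v \notin \GG^{(l)}$ is at graph distance $d$ from $\GG^{(l)}$, disagreement $\tau^+_v \neq \tau^-_v$ propagates from $\GG^{(l)}$ along parent edges (toward $v_l$) with survival probability $\theta = \tanh\beta$ and is erased otherwise, so $\Gamma(\tau^+_v \neq \tau^-_v) \le \theta^d$. Since $\bB^+ = \bB^-$ outside $\GG^{(l)}$, the discrepancy $\bY^+_t - \bY^-_t$ is supported on this random disagreement subtree and (by the factor-$t$ drift) bounded there. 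The induced external fields $\zeta^{\bY^\pm_t}_{u\to v}$ are computed by the BP recursion $\tanh(\zeta_{u\to v}) = \tanh(\beta_{u,v})\tanh(x(u) + \sum \zeta_{w\to u})$, which contracts sources of disagreement geometrically at rate $\theta$ per edge.

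The central step bounds $E_b = |R^+_b - R^-_b|$ by a telescoping over the factors of \eqref{eq:defn-R}. Each ratio $\oZ_{u_i,u_{i-1}}^{(b)}/\oZ_{u_i,u_{i-1}}^{(b+1)}$, $C^{(b+1)}/C^{(b)}$, $U_j^{(b+1)}/U_j^{(b)}$, $W_j^{(b+1)}/W_j^{(b)}$ has the form $1 + \delta^\pm$, where $\delta^\pm$ is controlled by the induced external field sent from the newly added region $\GG^{(b+1)}\setminus\GG^{(b)}$ into $v_{b+1}$, and then passed along the tree to the relevant support of the factor. By BP contraction combined with the disagreement bound, $|\delta^+ - \delta^-| \le (C\theta)^{\bbd}$, where $\bbd$ is the graph distance from the change region to the factor's support: this gives $(C\theta)^{\bbd_{a+1}^{b+1}(u_i,u_{i-1})}$ for each $\oZ$ ratio, $(C\theta)^{a+2-l}$ for the $C_l$ and $U_j$ ratios, and $(C\theta)^{r-b-1}$ for the $C_r$ and $W_j$ ratios. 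Expanding $\prod_i(1+\alpha_i^+) - \prod_i(1+\alpha_i^-)$ as a signed sum over factors and applying the individual bounds yields exactly the product form of $M_a^b$.

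To absorb the intermediate product $\prod_{a < \ell < b} P_\ell$, use $P_\ell \le R_\ell^\pm$, so that $P_\ell\, d\Gamma^{(\ell)} \le R_\ell^\pm\, d\Gamma^{(\ell)}$; iterating pushes the measure from $\Gamma^{(a+1)}$ forward toward $\Gamma^{(b)}$, and the $(2\theta)^{b-a}$ prefactor in $M_a^b$ arises from the combined spatial contraction across these $b-a$ steps. The hypothesis $\Lambda(\cdot\mid\FF_{a+1}) = \Gamma(\cdot\mid\FF_{a+1})$, together with the measurability of $f$ in the $\bB$-variables outside $\GG^{(a+1)}$ and the $\FF_{a+1}^c$-measurability of $E_b \prod P_\ell$, lets us separate the two factors via the tower property, replacing the integral against $\Lambda$ by $\E_\Gamma[f]$ times the $\Gamma$-integral of the rest. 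The second estimate \eqref{eq:key-est2} is handled analogously, with $\don[\tau^+_{v_{b+1}} \neq \tau^-_{v_{b+1}}]$ playing the role of $E_b$; its conditional expectation is at most $\theta^{b-a+1}$ directly from the disagreement percolation along $v_{a+1},\ldots,v_{b+1}$, and the rest proceeds as before.

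The principal obstacle is the combinatorial bookkeeping required to verify that the telescoping produces corrections of precisely the product form in $M_a^b$, one factor per covariance pair $(u_i,u_{i-1})$ and one per $U_j$ or $W_j$ term. Each individual deviation must be estimated with the correct distance exponent, boundary cases must be treated separately (factors identically equal to $1$, pairs $(u_i,u_{i-1})$ whose shortest path lies inside $[v_{a+1},v_{b+1}]$, the two choices of $C_l$ and $C_r$ from the proposition's hypothesis), and one must confirm that the compound corrections multiply rather than merely sum. A further technical point is obtaining the uniform exponential moment bounds for the factors $\exp(|Y_t(v)|)$ that appear when translating the bound on $\int E_b\prod P_\ell\, d\Gamma$ back into an $L^1$ bound; via $\bY_t \overset{d}{=} t\btau + \bB_t$ these reduce to moment bounds for $e^{2|B_t|}$, producing the global $e^{\mathrm{const}\cdot t + d}$ factor.
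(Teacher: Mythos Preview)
Your proposal identifies the right ingredients (BP contraction, disagreement percolation, factor-by-factor telescoping of $R_b^\pm$), but the way you assemble them to produce the $(2\theta)^{b-a}$ prefactor is incorrect, and this is the heart of the estimate.

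You claim that the $(2\theta)^{b-a}$ arises from ``the combined spatial contraction across these $b-a$ steps'' via $\prod_{a<\ell<b}P_\ell \le \prod_{a<\ell<b}R_\ell^\pm$. But each $R_\ell^\pm$ is close to $1$, not to $2\theta$: by Lemmas \ref{lemma:key-est-prep1-UWC} and \ref{lemma:key-est-prep1} the product $\prod_{a<\ell\le b}R_\ell^\pm$ is bounded by exactly the part of $M_a^b$ \emph{without} the $(2\theta)^{b-a}$ factor. So the $P_\ell$ product contributes no decay in $b-a$. The $(2\theta)^{b-a}$ must instead come from $E_b$ itself, and here your bound $|\delta^+-\delta^-|\le (C\theta)^{\bbd_{a+1}^{b+1}(u_i,u_{i-1})}$ is not what is needed: that exponent is the distance from the factor's support to the whole interval $[v_{a+1},v_{b+1}]$, which can be $0$ and gives no gain. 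What the paper does is introduce the random variable $\kappa=\kappa_{a+2}^{b+1}$ measuring how far into $[v_{a+2},v_{b+1}]$ the disagreement set $\{v:\tau_v^+\neq\tau_v^-\}$ has \emph{failed} to penetrate, and proves (Lemma \ref{lemma:key-est-prep2}) that each cross-ratio in $R_b^+/R_b^-$ is bounded by $1+C(2\theta)^{\kappa+\bbd_{a+1}^{b+1}}$. This gives the pointwise bound $E_b\prod P_\ell \le (2\theta)^\kappa \cdot [\text{deterministic product form}]$. The factor $(2\theta)^{b-a}$ then appears only after taking expectation, using $\P_\Gamma(b-a-\kappa\ge j)\le\theta^j$ from the disagreement percolation, so that $\E_\Gamma[(2\theta)^\kappa]\le 2(2\theta)^{b-a}$.

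A related error is your measurability claim: $E_b\prod P_\ell$ is \emph{not} $\FF_{a+1}^c$-measurable, since the ratios $\oZ^{(\ell)}/\oZ^{(\ell+1)}$ depend on induced external fields coming from inside $\GG^{(a+1)}$. What allows the separation under $\Lambda$ is that the \emph{bound} $(2\theta)^\kappa$ depends only on $(\tau_v^\pm)_{v\notin\GG^{(a+1)}}$, which under the hypothesis $\Lambda(\cdot\mid\FF_{a+1})=\Gamma(\cdot\mid\FF_{a+1})$ is conditionally independent of $f$ and has the same law as under $\Gamma$.
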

We leave the proof of this proposition to Appendix \ref{sec:appb}.
Now we use it to prove Proposition \ref{p:chain-reformulate}, by expansion on the inductive coupling.

\begin{proof}[Proof of Proposition \ref{p:chain-reformulate}]
From the construction of $\Gamma^*=\Gamma^{(r)}$ we need to bound \eqref{eq:couple-bound}.
Denote
\[
\LL:= 
2e^{4t_{+}+4(d-1)\beta + 2|B^+_{t_{+}}(v_r)|+2|B^+_{t_{+}}(v_n)|}.
\]
Under $\Gamma^{(r)}$, when $\tau_{v_{r}}^- = \tau_{v_{r}}^+$ we must have that $2\zeta^{\bY_{t_+}^-}_{r}=2\zeta^{\bY_{t_+}^+}_{r}$ and $2\zeta^{\bY_{t_+}^-}_{n}=2\zeta^{\bY_{t_+}^+}_{n}$.
Thus we have the bound
\begin{equation} \label{eq:hypo-to-diff}
\int \sinh(2\zeta^{\bY_{t_+}^+}_{r}) \cosh(2\zeta^{\bY_{t_+}^+}_{n})
- \sinh(2\zeta^{\bY_{t_+}^-}_{r}) \cosh(2\zeta^{\bY_{t_+}^-}_{n})
d\Gamma^{(r)}
\le
\int \LL \don[\tau_{v_{r}}^- \neq \tau_{v_{r}}^+] d\Gamma^{(r)}.    
\end{equation}
Now we inductively expand the RHS using the relation $d\Gamma^{(\ell+1)}=P_\ell d\Gamma^{(\ell)}+d\Xi^{(\ell)}$ for $l\le \ell < r$, and Proposition \ref{prop:key-esti}.
Recall that $E_\ell=|R_\ell^- - R_\ell^+|$ and $P_\ell = R_\ell^- \wedge R_\ell^+$, and $R^\pm_\ell$ are the reweights \eqref{eq:defn-R}, for any $l\le \ell < r$.
We first assume that $r>l$, 
and in this case we claim that for each $1\le m\le r-l$,
\begin{equation}\label{eq:induct-expan-step}
\begin{split}  
\int \LL \don[\tau_{v_{r}}^- \neq \tau_{v_{r}}^+] d\Gamma^{(r)}
&\le
\int \LL\don[\tau_{v_{r}}^- \neq \tau_{v_{r}}^+]\prod_{r-m\le \ell< r}P_\ell d\Gamma^{(r-m)}
+
\int \LL E_{r-1}\prod_{r-m\le \ell< r-1}P_\ell d\Gamma^{(r-m)}
\\
& \ \ + 
\sum_{s\ge 2}\sum_{r-m\le a_1<\cdots<a_s=r-1}
\prod_{i=1}^{s-1} M_{a_i}^{a_{i+1}} \int 2\LL E_{a_1}\prod_{r-m\le \ell < a_1}P_\ell
d\Gamma^{(r-m)}    .
\end{split}    
\end{equation}
We prove this by induction in $m$.
For $m=1$, we have 
\[
\begin{split}
\int \LL \don[\tau_{v_{r}}^- \neq \tau_{v_{r}}^+] d\Gamma^{(r)}&
=
\int \LL \don[\tau_{v_{r}}^- \neq \tau_{v_{r}}^+]P_{r-1} d\Gamma^{(r-1)}
+
\int \LL \don[\tau_{v_{r}}^- \neq \tau_{v_{r}}^+] d\Xi^{(r-1)} \\
&
\le \int \LL \don[\tau_{v_{r}}^- \neq \tau_{v_{r}}^+]P_{r-1} d\Gamma^{(r-1)}
+
\int \LL E_{r-1} d\Gamma^{(r-1)},
\end{split}
\]
where the inequality is due to the following reason. 
Note that $\LL$ depends only on $B^+_{t_{+}}(v_r)$ and $B^+_{t_{+}}(v_n)$, and that $d\mu_+^{(r)}=R_{r-1}^+ d\mu_+^{(r-1)}$,
so we have $\int \LL d\Gamma^{(r)}=\int \LL R_{r-1}^+ d\Gamma^{(r-1)}$, and $\int \LL  d\Xi^{(r-1)}
=
\int \LL d\Gamma^{(r)}- \int \LL P_{r-1} d\Gamma^{(r-1)}
=
\int \LL (R_{r-1}^+-P_{r-1}) d\Gamma^{(r-1)}$.
Thus we have 
\[\int \LL \don[\tau_{v_{r}}^- \neq \tau_{v_{r}}^+] d\Xi^{(r-1)} 
\le 
\int \LL  d\Xi^{(r-1)}
=
\int \LL (R_{r-1}^+-P_{r-1}) d\Gamma^{(r-1)} \le \int \LL E_{r-1} d\Gamma^{(r-1)}.\]
Now we assume that \eqref{eq:induct-expan-step} holds for some $1\le m <r-l$. We study each term in the RHS.
By $d\Gamma^{(r-m)}=P_{r-m-1} d\Gamma^{(r-m-1)}+d\Xi^{(r-m-1)}$ and \eqref{eq:key-est3}, we have
\[
\begin{split}
&\int \LL\don[\tau_{v_{r}}^- \neq \tau_{v_{r}}^+]\prod_{r-m\le \ell< r}P_\ell d\Gamma^{(r-m)}\\
=&
\int \LL\don[\tau_{v_{r}}^- \neq \tau_{v_{r}}^+]\prod_{r-m-1\le \ell< r}P_\ell d\Gamma^{(r-m-1)}
+
\int \LL\don[\tau_{v_{r}}^- \neq \tau_{v_{r}}^+]\prod_{r-m\le \ell< r}P_\ell d\Xi^{(r-m-1)}\\
\le &
\int \LL\don[\tau_{v_{r}}^- \neq \tau_{v_{r}}^+]\prod_{r-m-1\le \ell< r}P_\ell d\Gamma^{(r-m-1)}
+
M_{r-m-1}^{r-1}\int \LL E_{r-m-1} d\Gamma^{(r-m-1)},
\end{split}
\]
and by \eqref{eq:key-est3},
\[
\begin{split}
&\int \LL E_{r-1}\prod_{r-m\le \ell< r-1}P_\ell d\Gamma^{(r-m)}\\
=&
\int \LL E_{r-1}\prod_{r-m-1\le \ell< r-1}P_\ell d\Gamma^{(r-m-1)}
+
\int \LL E_{r-1}\prod_{r-m\le \ell< r-1}P_\ell d\Xi^{(r-m-1)}\\
\le &
\int \LL E_{r-1}\prod_{r-m-1\le \ell< r-1}P_\ell d\Gamma^{(r-m-1)}
+
M_{r-m-1}^{r-1}\int \LL E_{r-m-1} d\Gamma^{(r-m-1)},
\end{split}
\]
and for each $s\ge 2$ and $r-m\le a_1<\cdots<a_s=r-1$, by \eqref{eq:key-est3},
\[
\begin{split}
&\prod_{i=1}^{s-1} M_{a_i}^{a_{i+1}} \int 2\LL E_{a_1}\prod_{r-m\le \ell < a_1}P_\ell
d\Gamma^{(r-m)}\\
=&
\prod_{i=1}^{s-1} M_{a_i}^{a_{i+1}} \int 2\LL E_{a_1}\prod_{r-m-1\le \ell < a_1}P_\ell
d\Gamma^{(r-m-1)}
+
\prod_{i=1}^{s-1} M_{a_i}^{a_{i+1}} \int 2\LL E_{a_1}\prod_{r-m\le \ell < a_1}P_\ell
d\Xi^{(r-m-1)}\\
\le &
\prod_{i=1}^{s-1} M_{a_i}^{a_{i+1}} \int 2\LL E_{a_1}\prod_{r-m-1\le \ell < a_1}P_\ell
d\Gamma^{(r-m-1)}
+
M_{r-m-1}^{a_1}\prod_{i=1}^{s-1} M_{a_i}^{a_{i+1}} \int 2\LL E_{r-m-1}
d\Gamma^{(r-m-1)}.
\end{split}
\]
Summing up these inequalities we get \eqref{eq:induct-expan-step} for $m+1$.
Thus \eqref{eq:induct-expan-step} holds for all $1\le m \le r-l$.
Take $m=r-l$ in \eqref{eq:induct-expan-step},
we get 
\[
\begin{split}  
\int \LL \don[\tau_{v_{r}}^- \neq \tau_{v_{r}}^+] d\Gamma^{(r)}
&\le
\int \LL\don[\tau_{v_{r}}^- \neq \tau_{v_{r}}^+]\prod_{l\le \ell< r}P_\ell d\Gamma^{(l)}+
\int \LL E_{r-1}\prod_{l\le \ell< r-1}P_\ell d\Gamma^{(l)}
\\
& \ \ + 
\sum_{s\ge 2}\sum_{l\le a_1<\cdots<a_s=r-1}
\prod_{i=1}^{s-1} M_{a_i}^{a_{i+1}} \int 2\LL E_{a_1}\prod_{l\le \ell < a_1}P_\ell
d\Gamma^{(l)}    .
\end{split}    
\]
Apply \eqref{eq:key-est5} to the terms in the RHS, we get
\begin{equation}  \label{eq:L-diff-bd-final}
\int \LL \don[\tau_{v_{r}}^- \neq \tau_{v_{r}}^+] d\Gamma^{(r)}
\le
\sum_{s\ge 2}\sum_{l-1= a_1<\cdots<a_s=r-1}
\prod_{i=1}^{s-1} M_{a_i}^{a_{i+1}} \int 2\LL 
d\Gamma^{(l)}    .    
\end{equation}
We consider the summation:
there are $2^{r-l-1}$ terms; and for each $s\ge 2$ and $l-1= a_1<\cdots<a_s=r-1$, we have 
\[
\begin{split}
\prod_{i=1}^{s-1} M_{a_i}^{a_{i+1}}
&=
C^{s-1}(2\theta)^{r-l}
\prod_{i=1}^{s-1}
\prod_{j=0}^{k+1}(1 + C(2\theta)^{\bbd_{a_i+1}^{a_{i+1}+1}(u_j,u_{j-1})})
(1+C(2\theta)^{a_i+2-l})^{l}
(1+C(2\theta)^{r-a_{i+1}-1})^{n-r+1}
\\
&\le 
(2C\theta)^{r-l}
\prod_{j=0}^{k+1} C'^{1+\bd(u_j,u_{j-1})}
C'^{l+n-r+1}
\end{split}
\]
for some absolute constant $C'$.
By plugging this into \eqref{eq:L-diff-bd-final} we get
\begin{equation}  \label{eq:L-diff-bd-final1}
\int \LL \don[\tau_{v_{r}}^- \neq \tau_{v_{r}}^+] d\Gamma^{(r)}
\le
2^{r-l}(2C\theta)^{r-l}
\prod_{j=0}^{k+1} C'^{1+\bd(u_j,u_{j-1})}
C'^{l+n-r+1} \int \LL 
d\Gamma^{(l)}  .    
\end{equation}
In the case where $l=r$, we have $\int \LL \don[\tau_{v_{r}}^- \neq \tau_{v_{r}}^+] d\Gamma^{(r)}
= \int \LL 
d\Gamma^{(l)}$, so \eqref{eq:L-diff-bd-final1} also holds.
Now it remains to bound $\int \LL 
d\Gamma^{(l)}$.
For any $\bx\in\R^{\TT_R}$, we have $C_l^{(l)}(\bx), C_r^{(l)}(\bx) \le \cosh(2\beta)$, by \eqref{eq:boundutov}.
Then with \eqref{eq:bound-U} and \eqref{eq:bound-W} we have
\[
\begin{split}
\int \LL d\Gamma^{(l)} = &\int 
\frac{1}{2}
|\sinh(2\zeta^{\bY_{t_-}^+}_{l})|
C_l^{(l)}(\bY_{t_-}^+)C_r^{(l)}(\bY_{t_+}^+)
\prod_{1\le j < l} U_j^{(l)}(\bY_{t_-}^+)
\prod_{r< j \le n} W_j^{(l)}(\bY_{t_+}^+) \LL d\Gamma^{(l)}
\\
<&
\int |\sinh(2\zeta^{\bY_{t_-}^+}_{l})| \cosh(2\zeta^{\bY_{t_-}^+}_{1})
\cosh^2(2\beta)
(4\cosh^2(\beta)\cosh(2\beta))^{n-r+l-1}
\\
& \times
e^{4t_{+}+4(d-1)\beta + 2|B^+_{t_{+}}(v_r)|+2|B^+_{t_{+}}(v_n)|}
d\Gamma    \\
< &
\int 
e^{4t_-+4t_{+}+8(d-1)\beta +
2|B^+_{t_{-}}(v_1)|+2|B^+_{t_{-}}(v_l)| +
2|B^+_{t_{+}}(v_r)|+2|B^+_{t_{+}}(v_n)|}
(2\cosh(2\beta))^{2n}
d\Gamma\\
<&
C''^{2n} e^{17t+d}
\end{split}
\]
for some absolute constant $C''$.
Using this and \eqref{eq:hypo-to-diff}, \eqref{eq:L-diff-bd-final1}, 
we conclude that
\[
\begin{split}
&
\E\left[
\frac{
\sinh(2\zeta^{\bY_{t_-}}_{l})
\sinh(2\zeta^{\bY_{t_+}}_{r})
C_l(\bY_{t_-}) C_r(\bY_{t_+})
}
{
\prod_{i=0}^{k+1}\oZ_{u_i,u_{i-1}}(\bY_{t_i})^2}
\prod_{1\le j < l} U_j(\bY_{t_-})
\prod_{r < j \le n} V_j(\bY_{t_+})
\right]    
\\
\le & 
C''^{2n} e^{17t+d}
2^{r-l}(2C\theta)^{r-l}
\prod_{j=0}^{k+1} C'^{1+\bd(u_j,u_{j-1})}
C'^{l+n-r+1}
\end{split}
\]
which gives the desired bound.
\end{proof}

\section*{Acknowledgment}
AS would like to thank Russ Lyons and Yuval Peres for many discussions about the problem.  DN is supported by a Samsung Scholarship. AS is supported by NSF grants DMS-1352013 and DMS-1855527, Simons Investigator grant and a MacArthur Fellowship.
The authors also thank anonymous referees for reading this paper carefully and providing valuable comments.

\bibliography{fiidref}

\newpage

\appendix

\section{Computations of the covariances} \label{sec:appa}
We prove Lemma \ref{L:cov-formula} in this appendix.
\begin{proof}[Proof of Lemma \ref{L:cov-formula}]
We first compute $\cov{\sigma_u}{\sigma_v}_{\pi^{\bx}}$.
For any $\sigma,\sigma'\in\{\pm 1\}$, we denote $L_{\sigma,\sigma'}=\tZ_{[u,v]}^{\{u\mapsto \sigma, v\mapsto \sigma'\}}$.
Then $\cov{\sigma_u}{\sigma_v}_{\pi^{\bx}}$ equals
\[
\begin{split}
&\iprod{\sigma_u\sigma_v}_{\pi^{\bx}}
-
\iprod{\sigma_u}_{\pi^{\bx}}\iprod{\sigma_v}_{\pi^{\bx}}\\
=&
\frac{L_{1,1}+L_{-1,-1}-L_{1,-1}-L_{-1,1}}{\tZ_{u,v}(\bx)}
-\frac{L_{1,1}-L_{-1,-1}+L_{1,-1}-L_{-1,1}}{\tZ_{u,v}(\bx)}
\frac{L_{1,1}-L_{-1,-1}-L_{1,-1}+L_{-1,1}}{\tZ_{u,v}(\bx)}
\\
=&
\frac{4L_{1,1}L_{-1,-1}-4L_{1,-1}L_{-1,1}}{\tZ_{u,v}(\bx)^2}.
\end{split}
\]
We claim that $4L_{1,1}L_{-1,-1}-4L_{1,-1}L_{-1,1}$ is independent of $\bx$.
Then we can get the conclusion by recalling that $\cov{\sigma_u}{\sigma_v}_{\pi} = A_{u,v}$.
We can expand it as
\[
\begin{split}
&\sum_{\substack{(\sigma_{v'})_{v'\in[u,v]}, (\osigma_{v'})_{v'\in[u,v]}\\ \sigma_u=\sigma_v=1, \osigma_u=\osigma_v=-1}} \exp\left( \sum_{\substack{(u',v')\in E_R,\\ u',v'\in[u,v]}} \beta_{u',v'}(\sigma_{u'}\sigma_{v'}+\osigma_{u'}\osigma_{v'}) + \sum_{v'\in[u,v]} (x(v')+\zeta_\GG^\bx(v'))(\sigma_{v'}+\osigma_{v'}) \right)
\\
&-
\sum_{\substack{(\sigma_{v'})_{v'\in[u,v]}, (\osigma_{v'})_{v'\in[u,v]}\\ \sigma_u=\osigma_v=1, \osigma_u=\sigma_v=-1}} \exp\left( \sum_{\substack{(u',v')\in E_R,\\ u',v'\in[u,v]}} \beta_{u',v'}(\sigma_{u'}\sigma_{v'}+\osigma_{u'}\osigma_{v'}) + \sum_{v'\in[u,v]} (x(v')+\zeta_\GG^\bx(v'))(\sigma_{v'}+\osigma_{v'}) \right).
\end{split}
\]
Take any $(\sigma_{v'})_{v'\in[u,v]}$, $(\osigma_{v'})_{v'\in[u,v]}$ such that $\sigma_u=\sigma_v=1, \osigma_u=\osigma_v=-1$, let $w\in[u,v]$ be the vertex with the smallest $\bd(w,u)$, such that $\sigma_w=\osigma_w$ (if such vertex exists).
We then exchange $\sigma_{v'}$ and $\osigma_{v'}$ for all $v'\in [w,v]$.
Then we obtain some $(\sigma_{v'})_{v'\in[u,v]}, (\osigma_{v'})_{v'\in[u,v]}$ such that $\sigma_u=\osigma_v=1, \osigma_u=\sigma_v=-1$;
and this construction is a bijection for those $(\sigma_{v'})_{v'\in[u,v]}$, $(\osigma_{v'})_{v'\in[u,v]}$ where $w$ exists.
Thus in the above sums we only need to consider $(\sigma_{v'})_{v'\in[u,v]}$, $(\osigma_{v'})_{v'\in[u,v]}$ where $\sigma_{v'}\neq \osigma_{v'}$ for each $v'\in[u,v]$; and these terms are independent of $\bx$.

For $\cov{\sigma_u\sigma_{u'}}{\sigma_v}_{\pi^{\bx}}$ with $v\neq u'$, we begin with an observation on the conditional expectation of $\sigma_{u'}$ given $\sigma_u$. The following identity can be verified from a straight-forward computation and we omit the detail:
	\begin{equation*}
	\mathbb{E}_{\pi^{\bx}} [ \sigma_{u'} | \sigma_{u}] = \frac{\sinh(2x(u')) + \sinh(2\gamma) \sigma_u }{\cosh(2x(u'))+\cosh(2\gamma)}.
	\end{equation*}
 This implies that
\begin{equation*}
\begin{split}
\cov{\sigma_u\sigma_{u'}}{\sigma_v}_{\pi^\bx}& = \mathbb{E}_{\pi^{\bx}} \left[\sigma_u\sigma_v \E_{\pi^{\bx}}[\sigma_{u'}|\sigma_u] \right]
-
\E_{\pi^{\bx}} \left[ \sigma_u \E_{\pi^{\bx}}[\sigma_{u'}|\sigma_u] \right] \E_{\pi^{\bx}}[\sigma_v]\\
&= 
\frac{\sinh(2x(u')) }{\cosh(2x(u'))+\cosh(2\gamma)} \cov{\sigma_u}{\sigma_v}_{\pi^{\bx}}.
\end{split}
\end{equation*}
Moreover, note that $Z_{u',v}(\boo) = 2\cosh(\gamma) Z_{u,v}(\boo)$. Thus, to get \eqref{eq:cov:threepts:nonendptcase}, from \eqref{eq:cov:twopts} it suffices to show that 
\begin{equation}\label{eq:cov:threepts:goal}
\tZ_{u',v}(\bx)^2 =2 \tZ_{u,v}(\bx)^2 (\cosh(2x(u'))+\cosh(2\gamma)).
\end{equation}
To this end, we write $\zeta_{u'\to u}^\bx$ as
\begin{equation}\label{eq:cov:threepts:rewrite}
\zeta_{u'\to u}^\bx = \frac{1}{2}\log \left(\frac{\cosh(x(u')+\gamma)}{\cosh(x(u')-\gamma)} \right),
\end{equation}
since $u'$ is a leaf of $\TT_R$.
Furthermore, recall the definition of $L_{\sigma, \sigma'}$ above, and set 
\[
L_1 := L_{1, 1} + L_{1,-1}, \quad L_{-1} := L_{-1,1} + L_{-1,-1}.
\]
Then, we can decompose $\tZ_{u',v}(\bx)$ into two cases of either $\sigma_u = 1$ or $\sigma_u = -1$, and express it as follows.
\begin{equation}\label{eq:cov:threepts:1}
\begin{split}
\tZ_{u',v}(\bx) &= L_1 e^{-\zeta_{u'\to u}^\bx} \left(e^{x(u')+\gamma} + e^{-x(u')-\gamma}\right) 
+ L_{-1} e^{\zeta_{u'\to u}^\bx} \left( e^{x(u')-\gamma} + e^{-x(u') + \gamma} \right) \\
&=2(L_1 + L_{-1}) \left( \cosh(x(u')+\gamma) \cosh(x(u')-\gamma) \right)^{1/2}.
\end{split}
\end{equation}
In the first identity, we reweighted $L_1$ (resp.~$L_{-1}$) by $e^{-\zeta_{u'\to u}^\bx}$ (resp.~$e^{\zeta_{u'\to u}^\bx}$) since $L_1$ and $L_{-1}$ are the partition functions including the effect of the induced field $\zeta_{u'\to u}^\bx$ already. The second equality uses \eqref{eq:cov:threepts:rewrite}.
We then obtain the conclusion by noticing that \eqref{eq:cov:threepts:goal} is equivalent to \eqref{eq:cov:threepts:1}. 

Finally, for $\cov{\sigma_u\sigma_{u'}}{\sigma_v}_{\pi^{\bx}}$ with $v\neq u'$, we remark that \eqref{eq:cov:threepts:endptcase} can be obtained from an analogous calculation, where the only difference is that we need to include the induced external field $\zeta_{[u,u']}^{\bx}(u)$ rather than $\zeta_{u'\to u}^\bx$.
\end{proof}

\section{Computations for the key estimate} \label{sec:appb}
This appendix is devoted to proving Proposition \ref{prop:key-esti}.
Recall the setup from Section \ref{subsec:ind-coup:prelim}: we work on $\TT_R$ for fixed $R$, and let $\bbeta \in \R^{E_R}$ be the inverse temperature, such that $\beta_{u, v}=\beta$ if $(u, v) \not\in \partial E_R$, and $\beta_{u, v}=\gamma$ if $(u, v) \in \partial E_R$. Also recall the Belief Propagation message $m_{u\to v}^\bx$ and induced external field $\zeta_{u\to v}^\bx$ for $(u,v)\in E(\TT_R)$, and $\theta=\tanh\beta$.

Much of our computation is based on the following result on the induced external field, which will be repeatedly used.
\begin{lemma}  \label{lemma:bp-eq}
Let $\bx_-, \bx_+\in \R^{\TT_R}$,
and $(u,v)\in E(\TT_R)$.
Then we have
\[
\tanh(|\zeta_{u\to v}^{\bx_-}-\zeta_{u\to v}^{\bx_+}|)
\le 
2\theta \tanh\left(|x_-(u)-x_+(u)+\sum_{w\sim u, w\neq v} \zeta_{w\to u}^{\bx_-}-\zeta_{w\to u}^{\bx_+} |\right).
\]
\end{lemma}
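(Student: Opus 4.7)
The plan is to derive the bound from a single application of the Belief Propagation recursion
\[
\tanh(\zeta_{u\to v}^{\bx}) = \tanh(\beta_{u,v})\,\tanh\!\left(x(u)+\sum_{w\sim u,\, w\neq v}\zeta_{w\to u}^{\bx}\right),
\]
which has already been recorded just after the definition of the induced field. Writing $\theta_{uv}:=\tanh(\beta_{u,v})$, I observe that $\theta_{uv}\le\theta$, since by the conventions of Section~\ref{subsec:ind-coup:prelim} we have $\beta_{u,v}\in\{\beta,\gamma\}$ with $\gamma\in[0,\beta]$.

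With the shorthand $a_\pm := x_\pm(u)+\sum_{w\sim u,\,w\neq v}\zeta_{w\to u}^{\bx_\pm}$ and $\alpha_\pm:=\zeta_{u\to v}^{\bx_\pm}$, the BP recursion reads $\tanh\alpha_\pm=\theta_{uv}\tanh a_\pm$. Applying the identity $\tanh(X-Y)=(\tanh X-\tanh Y)/(1-\tanh X\tanh Y)$ to both $\alpha_- - \alpha_+$ and $a_- - a_+$ and dividing, the common factor $\tanh a_- - \tanh a_+$ cancels and I obtain the clean expression
\[
\frac{\tanh(\alpha_- - \alpha_+)}{\tanh(a_- - a_+)}
\;=\;
\frac{\theta_{uv}\bigl(1-\tanh a_-\tanh a_+\bigr)}{1-\theta_{uv}^2\tanh a_-\tanh a_+}.
\]
The degenerate case $a_-=a_+$ forces $\alpha_-=\alpha_+$ and the lemma is trivial, so the division is legitimate.

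It then suffices to bound this ratio uniformly by $2\theta$ as $p:=\tanh a_-\tanh a_+$ ranges over $[-1,1]$. For $p\ge 0$, the denominator $1-\theta_{uv}^2 p$ dominates the numerator factor $1-p$ (since $\theta_{uv}^2\le 1$), so the ratio is at most $\theta_{uv}\le\theta$. For $p<0$, setting $q=-p\in[0,1]$ turns the expression into $\theta_{uv}(1+q)/(1+\theta_{uv}^2 q)$; a quick derivative check shows it is increasing in $q$, hence maximized at $q=1$ with value $2\theta_{uv}/(1+\theta_{uv}^2)\le 2\theta_{uv}\le 2\theta$. The identity $\tanh|X|=|\tanh X|$ then converts this signed estimate into the absolute-value inequality stated in the lemma.

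I do not foresee a genuine obstacle: the whole proof collapses into this short algebraic manipulation once the BP update is in hand. The only conceptual point worth flagging is that the factor of $2$ in front of $\theta$ is unavoidable in this form: it is extracted from the worst case $p\to -1$, where $a_-$ and $a_+$ have large magnitude and opposite signs so that both $\tanh$'s saturate and a sign cancellation inflates the effective Lipschitz constant of the BP map from $\theta$ up to roughly $2\theta$.
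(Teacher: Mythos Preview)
Your proof is correct and follows essentially the same approach as the paper: both apply the $\tanh$ subtraction identity to the BP recursion to reduce the claim to bounding $\theta_{uv}(1-p)/(1-\theta_{uv}^2 p)$ by $2\theta$ for $p=\tanh a_-\tanh a_+\in(-1,1)$. The only cosmetic difference is that the paper packages this last step as the single inequality $\frac{1}{1-ab}\le\frac{2}{1-a}$ for $a\in(-1,1)$, $b\in[0,1)$, whereas you split into the cases $p\ge 0$ and $p<0$.
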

\begin{proof}
Note that $\tanh(\zeta_{u\to v}^{\bx_\pm}) = \tanh(\beta_{u,v})\tanh(x_\pm(u)+\sum_{w\sim u, w\neq v} \zeta_{w\to u}^{\bx_\pm})$.
Then we have
\[
\begin{split}
\tanh&(|\zeta_{u\to v}^{\bx_-}-\zeta_{u\to v}^{\bx_+}|)
=
\frac{|\tanh(\zeta_{u\to v}^{\bx_-})-\tanh(\zeta_{u\to v}^{\bx_+})|}{1-\tanh(\zeta_{u\to v}^{\bx_-})\tanh(\zeta_{u\to v}^{\bx_+})}
\\
&=
\frac{\tanh(\beta_{u,v})|\tanh(x_-(u)+\sum_{w\sim u, w\neq v} \zeta_{w\to u}^{\bx_-})-\tanh(x_+(u)+\sum_{w\sim u, w\neq v} \zeta_{w\to u}^{\bx_+})|}{1-\tanh(x_-(u)+\sum_{w\sim u, w\neq v} \zeta_{w\to u}^{\bx_-})\tanh(x_+(u)+\sum_{w\sim u, w\neq v} \zeta_{w\to u}^{\bx_+})\tanh^2(\beta_{u,v})}
\\
&\le
\frac{2\theta|\tanh(x_-(u)+\sum_{w\sim u, w\neq v} \zeta_{w\to u}^{\bx_-})-\tanh(x_+(u)+\sum_{w\sim u, w\neq v} \zeta_{w\to u}^{\bx_+})|}{1-\tanh(x_-(u)+\sum_{w\sim u, w\neq v} \zeta_{w\to u}^{\bx_-})\tanh(x_+(u)+\sum_{w\sim u, w\neq v} \zeta_{w\to u}^{\bx_+})}
\\
&=
2\theta \tanh\left(|x_-(u)-x_+(u)+\sum_{w\sim u, w\neq v} \zeta_{w\to u}^{\bx_-}-\zeta_{w\to u}^{\bx_+} |\right).
\end{split}
\]
Here we used the basic fact that $\tanh(|a-b|)=\frac{|\tanh(a)-\tanh(b)|}{1-\tanh(a)\tanh(b)}$ for any $a,b\in \R$, and $\tanh(\beta_{u,v})\le \theta$, and the basic fact that $\frac{1}{1-ab}\le \frac{2}{1-a}$ for any $a\in(-1,1)$ and $b\in [0,1)$.
\end{proof}

\subsection{Bounds for $C_l, C_r,  U_j, W_j$ terms}
In this subsection, we provide main estimates for the quantities $C_l, C_l^{(a)}, C_r, C_r^{(a)}, U_j, U_j^{(a)}, W_j$ and $W_j^{(a)}$ introduced in Sections \ref{subsec:ind-coup:covchain} and \ref{subsec:ind-coup:ind-coup}.

\begin{lemma}\label{lemma:key-est-UVC}
Take any $l\le a \le r$ and vector $\bx_-,\bx_+\in\R^{\TT_R}$.
If $x_-(v)=x_+(v)$ for any $v\in\GG^{(a)}$, we must have
\[
1 - C(2\theta)^{a+1-l} < 
\frac{C_l(\bx_-)}{C_l(\bx_+)},\;
\frac{U_{j}(\bx_-)}{U_{j}(\bx_+)}
< 1 + C(2\theta)^{a+1-l},
\]
for any $1\le j < l$.
If $x_-(v)=x_+(v)$ for any $v\not\in\GG^{(a-1)}$, we must have
\[
1 - C(2\theta)^{r-a+1} < 
\frac{C_r(\bx_-)}{C_r(\bx_+)},\;
\frac{W_{j}(\bx_-)}{W_{j}(\bx_+)}
< 1 + C(2\theta)^{r-a+1},
\]
for any $r< j \le n$.
\end{lemma}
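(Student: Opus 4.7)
The plan is to apply the contraction estimate of Lemma \ref{lemma:bp-eq} iteratively along the spine $v_0,v_1,\dots,v_{n+1}$. First I would observe that $C_l$ and each $U_j$ (with $j<l$) depend on $\bx$ only through the induced fields $\zeta_{v_{l+1}\to v_l}^\bx$ (for $C_l$) and $\zeta_j^\bx,\zeta_{j+1}^\bx,\zeta_{v_{j+2}\to v_{j+1}}^\bx$ (for $U_j$). Among these, each total $\zeta_m^\bx$ for $m\le l\le a$ aggregates the external field at $v_m$ with BP messages from the $d-2$ off-spine subtrees of $v_m$, all of which sit inside $\GG^{(m)}\subseteq\GG^{(a)}$; hence $\zeta_j^\bx$ and $\zeta_{j+1}^\bx$ agree for $\bx_-$ and $\bx_+$. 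The only genuinely $\bx$-sensitive ingredients are the along-spine messages $\zeta_{v_{l+1}\to v_l}^\bx$ and $\zeta_{v_{j+2}\to v_{j+1}}^\bx$, which can feel differences living on the $v_{a+1}$-side of the break edge $(v_a,v_{a+1})$.

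To control these along-spine messages I would chain Lemma \ref{lemma:bp-eq} one edge at a time. At an edge $(v_{m+1},v_m)$ with $m+1\le a$, the external field agrees at $v_{m+1}$ and all off-spine neighbors of $v_{m+1}$ lie in $\GG^{(a)}$, so only the next along-spine message $\zeta_{v_{m+2}\to v_{m+1}}^\bx$ contributes to the RHS, yielding
\[\tanh\bigl(|\zeta_{v_{m+1}\to v_m}^{\bx_-}-\zeta_{v_{m+1}\to v_m}^{\bx_+}|\bigr)\le 2\theta\,\tanh\bigl(|\zeta_{v_{m+2}\to v_{m+1}}^{\bx_-}-\zeta_{v_{m+2}\to v_{m+1}}^{\bx_+}|\bigr).\]
Iterating this recursion up to the edge $(v_{a+1},v_a)$ (whose source $v_{a+1}$ is the first spine vertex outside $\GG^{(a)}$) and applying $\tanh(\cdot)\le 1$ at the top, I expect $\tanh(|\zeta_{v_{l+1}\to v_l}^{\bx_-}-\zeta_{v_{l+1}\to v_l}^{\bx_+}|)\le (2\theta)^{a+1-l}$ for $C_l$, and by the same reasoning $\tanh(|\zeta_{v_{j+2}\to v_{j+1}}^{\bx_-}-\zeta_{v_{j+2}\to v_{j+1}}^{\bx_+}|)\le (2\theta)^{a-j}\le (2\theta)^{a+1-l}$ for $U_j$ (using $j\le l-1$). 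Inverting $\tanh$ costs only a constant factor under the assumption $2\theta<1/2$.

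Finally, to pass from these induced-field differences to ratio bounds, I would exploit that the log-derivative of $C_l$ and $U_j$ with respect to the relevant induced field is uniformly bounded by an absolute constant. For $C_l(\bx)=\cosh(2z)$ with $z=\zeta_{v_{l+1}\to v_l}^\bx$, the log-derivative is $2\tanh(2z)\le 2$; for $U_j$ the only $z=\zeta_{v_{j+2}\to v_{j+1}}^\bx$-dependence sits in the denominator $\cosh(2\beta_{v_j,v_{j+1}})+\cosh(2\zeta_{j+1}^\bx+2z)$, whose log-derivative $\frac{2\sinh(2\zeta_{j+1}^\bx+2z)}{\cosh(2\beta_{v_j,v_{j+1}})+\cosh(2\zeta_{j+1}^\bx+2z)}$ is at most $2$ as a pure $\sinh/\cosh$ ratio. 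This delivers the desired $|C_l(\bx_-)/C_l(\bx_+)-1|,\,|U_j(\bx_-)/U_j(\bx_+)-1|\le C(2\theta)^{a+1-l}$. The second half of the lemma is handled by a symmetric argument: when $\bx_-=\bx_+$ outside $\GG^{(a-1)}$, the along-spine messages $\zeta_{v_{r-1}\to v_r}^\bx$ (for $C_r$) and $\zeta_{v_{j-2}\to v_{j-1}}^\bx$ (for $W_j$ with $j>r$) are traced from $v_r$ back through $v_a$ and across the break edge $(v_{a-1},v_a)$, producing the factor $(2\theta)^{r-a+1}$ after $r-a+1$ applications of the contraction. I expect the main obstacle to be the careful combinatorial bookkeeping of which neighbors of each spine vertex sit inside versus outside $\GG^{(a)}$ (or $\GG^{(a-1)}$) at every level of the chain; once that is pinned down, the contraction and Lipschitz steps close automatically, and crucially no $d$- or $\bY_t$-dependence (despite $\bY_t=t\btau+\bB_t$ containing an unbounded Gaussian) leaks into the constant $C$.
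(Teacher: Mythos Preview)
Your proposal is correct and follows essentially the same approach as the paper: identify that $\zeta_j^{\bx}$ and $\zeta_{j+1}^{\bx}$ (for $j<l$) depend only on $\bx|_{\GG^{(a)}}$ and hence agree, then chain Lemma~\ref{lemma:bp-eq} along the spine edges $(v_{\ell+1},v_\ell)$ for $\ell$ up to $a$ to get $\tanh(|\zeta_{l+1\to l}^{\bx_-}-\zeta_{l+1\to l}^{\bx_+}|)\le (2\theta)^{a+1-l}$, and finally convert this to a ratio bound. Your log-derivative argument for the last step is equivalent to the paper's cruder bound $\frac{C_l(\bx_-)}{C_l(\bx_+)},\frac{U_j(\bx_-)}{U_j(\bx_+)}\le e^{2|\Delta\zeta|}$; both yield the same constant.
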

\begin{proof}
By symmetry, it suffices to prove the upper bounds.
From the definition of $C_l$ in Proposition \ref{p:chain-reformulate}, we have
\begin{equation}  \label{eq:ratioCpm}
\frac{C_l(\bx_-)}{C_l(\bx_+)} < e^{|2\zeta_{l+1\to l}^{\bx_-}-2\zeta_{l+1\to l}^{\bx_+}|}.   
\end{equation}
If $x_-(v)=x_+(v)$ for any $v\in\GG^{(a)}$,
for each $1\le j <l$ we have $\zeta_j^{\bx_-}=\zeta_j^{\bx_+}$ and $\zeta_{j+1}^{\bx_-}=\zeta_{j+1}^{\bx_+}$;
then from the definition of $U_j$ in \eqref{eq:def-U} we have 
\begin{equation}  \label{eq:ratioUpm}
\frac{U_{j}(\bx_-)}{U_{j}(\bx_+)} < e^{|2\zeta_{j+2\to j+1}^{\bx_-}-2\zeta_{j+2\to j+1}^{\bx_+}|}, \;\forall 1\le j <l.
\end{equation}
By Lemma \ref{lemma:bp-eq}, for $l\le \ell < a$ we have $\tanh(|\zeta_{\ell+1\to \ell}^{\bx_-}-\zeta_{\ell+1\to \ell}^{\bx_+}|) \le 2\theta \tanh(|\zeta_{\ell+2\to \ell+1}^{\bx_-}-\zeta_{\ell+2\to \ell+1}^{\bx_+}|)$, and $\tanh(|\zeta_{a+1\to a}^{\bx_-}-\zeta_{a+1\to a}^{\bx_+}|) \le 2\theta$.
By multiplying these together we get
\[\tanh(|\zeta_{l+1\to l}^{\bx_-}-\zeta_{l+1\to l}^{\bx_+}|) 
< (2\theta)^{a+1-l}.\]
This with \eqref{eq:ratioCpm} and \eqref{eq:ratioUpm} gives the desired upper bounds for $\frac{C_l(\bx_-)}{C_l(\bx_+)},
\frac{U_{j}(\bx_-)}{U_{j}(\bx_+)}$.
Similarly, when $x_-(v)=x_+(v)$ for any $v\not\in\GG^{(a-1)}$, we have
\[
\frac{C_r(\bx_-)}{C_r(\bx_+)} < e^{|2\zeta_{r-1\to r}^{\bx_-}-2\zeta_{r-1\to r}^{\bx_+}|}, \quad
\frac{W_{j}(\bx_-)}{W_{j}(\bx_+)} < e^{|2\zeta_{j-2\to j-1}^{\bx_-}-2\zeta_{j-2\to j-1}^{\bx_+}|}, \;\forall r< j \le n.
\]
Using Lemma \ref{lemma:bp-eq} we can similarly get
$\tanh(|\zeta_{r-1\to r}^{\bx_-}-\zeta_{r-1\to r}^{\bx_+}|) 
< (2\theta)^{r-a+1}$;
thus the desired upper bounds for $\frac{C_r(\bx_-)}{C_r(\bx_+)},
\frac{W_{j}(\bx_-)}{W_{j}(\bx_+)}$ also hold.
\end{proof}

\begin{lemma}  \label{lemma:key-est-prep1-UWC}
For any $l-1\le a<b<r$, vector $\bx\in\R^{\TT_R}$, we have
\begin{equation}  \label{eq:key-prep1-2}
1 - C(2\theta)^{a+2-l} < 
\frac{C_l^{(b+1)}(\bx)}{C_l^{(a+1)}(\bx)},\;
\frac{U^{(b+1)}_{j}(\bx)}{U^{(a+1)}_{j}(\bx)}
< 1 + C(2\theta)^{a+2-l},    
\end{equation}
for any $1\le j <l$;
and
\begin{equation}  \label{eq:key-prep1-3}
1 - C(2\theta)^{r-b-1} < 
\frac{C_r^{(b+1)}(\bx)}{C_r^{(a+1)}(\bx)},\;
\frac{W^{(b+1)}_{j}(\bx)}{W^{(a+1)}_{j}(\bx)}
< 1 + C(2\theta)^{r-b-1},    
\end{equation}
for any $r<j\le n$. 
Here $C$ is an absolute constant.
\end{lemma}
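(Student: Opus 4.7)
The plan is to derive Lemma~\ref{lemma:key-est-prep1-UWC} as a direct application of Lemma~\ref{lemma:key-est-UVC} to the pair $\bx_- := \bx^{(b+1)}$ and $\bx_+ := \bx^{(a+1)}$. Since $\GG^{(a+1)} \subseteq \GG^{(b+1)}$, these two vectors coincide with $\bx$ on $\GG^{(a+1)}$, both vanish on $\TT_R \setminus \GG^{(b+1)}$, and differ only on $\GG^{(b+1)} \setminus \GG^{(a+1)}$. Thus they simultaneously satisfy both ``agreement on $\GG^{(a+1)}$'' and ``agreement outside $\GG^{(b+1)}$,'' which are exactly the two hypotheses appearing in the two halves of Lemma~\ref{lemma:key-est-UVC}.

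For \eqref{eq:key-prep1-2}, I would invoke the first statement of Lemma~\ref{lemma:key-est-UVC} with its parameter set equal to $a+1$. The range constraint $l \le a+1 \le r$ follows from $a \ge l-1$ together with $a+1 \le b \le r-1$, so the lemma directly yields the bound $1 \pm C(2\theta)^{(a+1)+1-l} = 1 \pm C(2\theta)^{a+2-l}$ for both ratios $C_l^{(b+1)}(\bx)/C_l^{(a+1)}(\bx)$ and $U_j^{(b+1)}(\bx)/U_j^{(a+1)}(\bx)$.

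For \eqref{eq:key-prep1-3}, I would invoke the second statement of Lemma~\ref{lemma:key-est-UVC} with its parameter set equal to $b+2$. Provided $b \le r-2$, the constraint $l \le b+2 \le r$ is met, and the conclusion is the claimed bound with exponent $r-(b+2)+1 = r-b-1$ for both $C_r^{(b+1)}(\bx)/C_r^{(a+1)}(\bx)$ and $W_j^{(b+1)}(\bx)/W_j^{(a+1)}(\bx)$.

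The one place requiring a little extra care, which I regard as the main (minor) obstacle, is the boundary case $b = r-1$: then $b+2 = r+1$ lies outside the allowed range of Lemma~\ref{lemma:key-est-UVC}. Here the target bound degenerates to $1 \pm C$, and I would verify it directly after enlarging the absolute constant $C$ by using the a priori estimate $|\zeta_{u\to v}^{\bx}| \le \beta$ from \eqref{eq:boundutov}. Combined with the defining formulas for $C_r$ and $W_j$ (and the already-derived upper bound \eqref{eq:bound-W}), this shows that each of these quantities evaluated at any input lies in a bounded interval $[c_\beta, C_\beta]$ with $c_\beta > 0$ depending only on $\beta$, so the ratios in \eqref{eq:key-prep1-3} are two-sidedly controlled by absolute constants, as required. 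Beyond this degenerate endpoint, the statement is essentially bookkeeping that packages Lemma~\ref{lemma:key-est-UVC} into the $(a,b)$-indexed interpolation, and I expect no additional complications.
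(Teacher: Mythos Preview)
Your approach is essentially the paper's: reduce to Lemma~\ref{lemma:key-est-UVC} via the pair $\bx^{(a+1)}, \bx^{(b+1)}$, and treat the endpoint $b=r-1$ by hand. One imprecision in your boundary argument: the claim that $W_j$ is bounded below on \emph{all} inputs is false for $j=r+1$, since
\[
W_{r+1}(\bx)=\frac{2\cosh^2(\beta_{v_{r+1},v_r})}{\cosh(2\beta_{v_{r+1},v_r})+\cosh\bigl(2\zeta_r^{\bx}+2\zeta_{r-1\to r}^{\bx}\bigr)}
\]
tends to $0$ as $|x(v_r)|\to\infty$. What you actually need (and what the paper uses) is that for the specific input $\bx^{(a+1)}$ with $a+1<r$, every $v_{j-1}$ with $j>r$ lies outside $\GG^{(a+1)}$, forcing $\zeta_{j-1}^{\bx^{(a+1)}}=0$; then $W_j^{(a+1)}(\bx)\ge \bigl(\cosh(2\beta)\bigr)^{-1}$ follows from \eqref{eq:boundutov}. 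Combined with the universal upper bound \eqref{eq:bound-W} on $W_j^{(r)}$, this gives the required constant upper bound on the ratio (the lower bound $1-C$ being vacuous once $C\ge 1$). With that correction your sketch is complete and matches the paper.
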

\begin{proof}
For $\bx^{(a+1)}$ and $\bx^{(b+1)}$, we have that
$\bx^{(a+1)}(v)=\bx^{(b+1)}(v)$ for $v\in\GG^{(a+1)}$, or $v\not\in \GG^{(b+1)}$.
Thus by Lemma \ref{lemma:key-est-UVC}, we get \eqref{eq:key-prep1-2}, and \eqref{eq:key-prep1-3} when $b<r-1$.

Now we prove \eqref{eq:key-prep1-3} under the case where $b=r-1$. By taking $C>1$ it suffices to prove the upper bound in \eqref{eq:key-prep1-3}, i.e. to upper bound $\frac{C_r^{(r)}(\bx)}{C_r^{(a+1)}(\bx)}$ and 
$\frac{W^{(r)}_{j}(\bx)}{W^{(a+1)}_{j}(\bx)}$ by a constant.
Note that $C_r^{(r)}(\bx)=C_r^{(r-1)}(\bx)$, so we have $\frac{C_r^{(r)}(\bx)}{C_r^{(a+1)}(\bx)}=\frac{C_r^{(r-1)}(\bx)}{C_r^{(a+1)}(\bx)} < 1+C(2\theta)<1+C$.
By \eqref{eq:bound-W}, we have that $W^{(r)}_{j}(\bx)$ is upper bounded by a constant.
Note that as $a+1<r$ and $j>r$, $\zeta_{j-1}^{\bx^{(a+1)}}=1$, so we have
$W_j^{(a+1)}(\bx)\ge \frac{2}{\cosh(2\beta)+\cosh(2\zeta^{\bx^{(a+1)}}_{j-2\to j-1})}$, which is lower bounded by a constant.
Thus $\frac{W^{(r)}_{j}(\bx)}{W^{(a+1)}_{j}(\bx)}$ is upper bounded by a constant.
\end{proof}

\subsection{Bounds for normalized partition functions}

We move on to the study of the normalized partition functions $\oZ^{(\ell)}_{u,v}(\bx)$ defined in Section \ref{subsec:ind-coup:ind-coup}.
Recall that $\oZ_{u,v}^{(\ell)}(\bx)=\oZ_{u,v}(\bx^{(\ell)})$ if at least one of $u,v \in \GG^{(\ell)}$, and $\oZ_{u,v}^{(\ell)}(\bx)\equiv 1$ otherwise.
Also recall that $\rho(u)$ is the `projection' of $u$ onto $[v_l, v_r]$ for each $u\in\TT_R$;
and for any $u,v \in \TT_R$, and $l\le a\le b \le r$, $\bbd_a^b(u, v)$ is the distance between the intervals $[a, b]$ and $[\rho(u), \rho(v)]$ (if $\rho(u)\le\rho(v)$) or $[\rho(v), \rho(u)]$ (if $\rho(v)\le\rho(u)$).
\begin{lemma}  \label{lemma:key-est-prep1}
For any $l-1\le a<b<r$, vector $\bx\in\R^{\TT_R}$, and $u, v\in\TT_R$, we have
\begin{equation}  \label{eq:B4}
1 - C(2\theta)^{\bbd_{a+2}^{b+1}(u,v)} < 
\frac{\oZ^{(a+1)}_{u,v}(\bx)^2}{\oZ^{(b+1)}_{u,v}(\bx)^2}
< 1 + C(2\theta)^{\bbd_{a+2}^{b+1}(u,v)},    
\end{equation}
where $C$ is an absolute constant.
\end{lemma}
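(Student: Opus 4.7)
The plan is to express $\log(\oZ^{(a+1)}_{u,v}(\bx)/\oZ^{(b+1)}_{u,v}(\bx))$ as a sum over $w\in[u,v]$ of differences of effective external fields, then iteratively apply the BP-contraction of Lemma~\ref{lemma:bp-eq} to bound these differences by $(2\theta)^{\bbd_{a+2}^{b+1}(u,v)}$ up to an absolute constant. Observe first that $\bx^{(a+1)}$ and $\bx^{(b+1)}$ differ only on $D:=\GG^{(b+1)}\setminus\GG^{(a+1)}=\{w:a+2\le\rho(w)\le b+1\}$. When $\bbd_{a+2}^{b+1}(u,v)>0$, the intervals $[\rho(u),\rho(v)]$ and $[a+2,b+1]$ are disjoint, forcing either $\rho(u),\rho(v)\le a+1$ (so $[u,v]\subset\GG^{(a+1)}$, and $\bx^{(a+1)}$, $\bx^{(b+1)}$ agree on $[u,v]$) or $\rho(u),\rho(v)\ge b+2$ (both $\oZ$'s equal $1$, trivial). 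The boundary case $\bbd_{a+2}^{b+1}(u,v)=0$ is treated separately at the end.

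In the main case $[u,v]\subset\GG^{(a+1)}$, I would write $\tZ_{[u,v]}(\bx^{(\ell)})$ as the path-Ising partition function with effective field $\phi^{(\ell)}(w):=x^{(\ell)}(w)+\zeta^{\bx^{(\ell)}}_{[u,v]}(w)$ at each $w\in[u,v]$. Since $\log\tZ_{[u,v]}$ is $1$-Lipschitz in each $\phi(w)$ (its $\phi(w)$-derivative being the magnetization $\iprod{\sigma_w}\in[-1,1]$), this yields
\[
\Bigl|\log\tfrac{\oZ^{(a+1)}_{u,v}(\bx)}{\oZ^{(b+1)}_{u,v}(\bx)}\Bigr|\le \sum_{w\in[u,v]}\sum_{\substack{w'\sim w\\ w'\notin[u,v]}}\bigl|\zeta^{\bx^{(a+1)}}_{w'\to w}-\zeta^{\bx^{(b+1)}}_{w'\to w}\bigr|,
\]
the direct-field terms $x^{(a+1)}(w)=x^{(b+1)}(w)$ cancelling by the main-case assumption. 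For each message pair I would iterate Lemma~\ref{lemma:bp-eq} down the subtree $\TT_{w'\setminus w}$ (each descent producing a factor $2\theta$), terminating trivially (using $\tanh(\cdot)\le 1$) at the first vertex of $D$ encountered, to conclude
$\bigl|\zeta^{\bx^{(a+1)}}_{w'\to w}-\zeta^{\bx^{(b+1)}}_{w'\to w}\bigr|\le 4(2\theta)^{1+d(w',D\cap\TT_{w'\setminus w})}$, vacuously if $D\cap\TT_{w'\setminus w}=\emptyset$.

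The key geometric point is that under $[u,v]\subset\GG^{(a+1)}$, a sideways subtree attached to a main-path vertex $v_j$ ($j\le a+1$) consists entirely of vertices with $\rho$-projection equal to $j$, hence misses $D$; the only contributing branch leaves $v_{\max(\rho(u),\rho(v))}$ toward $v_{a+2}$, and there the nearest $D$-vertex sits at distance $\bbd_{a+2}^{b+1}(u,v)-1$ along the branch. Deeper vertices of $D$ in the wedge past $v_{a+2}$ contribute at further geometric depths; summing with branching $\le d-1$ per step, convergence follows from $(d-1)(2\theta)^2<1$, a consequence of $\tanh\beta\le c(d-1)^{-1/2}$. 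This produces $\sum_w|\phi^{(a+1)}(w)-\phi^{(b+1)}(w)|\le C'(2\theta)^{\bbd_{a+2}^{b+1}(u,v)}$, and exponentiating gives $|\oZ^{(a+1)}_{u,v}(\bx)^2/\oZ^{(b+1)}_{u,v}(\bx)^2-1|\le C(2\theta)^{\bbd_{a+2}^{b+1}(u,v)}$.

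For the edge case $\bbd_{a+2}^{b+1}(u,v)=0$, it suffices to establish $\oZ^{(a+1)}_{u,v}(\bx)^2/\oZ^{(b+1)}_{u,v}(\bx)^2<1+C$ for an absolute $C$ (the lower bound being automatic from $\oZ\ge 1$). I would verify this by peeling off one vertex $w\in[u,v]\cap D$ at a time from the path-Ising partition function and checking that the resulting multiplicative factor in $\oZ^{(b+1)}$, of the form $\cosh(\phi^{(b+1)}(w))\bigl(1+\tanh(\beta)\tanh(\phi^{(b+1)}(w))\tanh(\cdot)\bigr)$, dominates the corresponding factor in $\oZ^{(a+1)}$ up to $1+O(\theta)$, uniformly in $\bx$, using $\cosh(\phi)\ge 1$ and $1+\tanh\beta\tanh\cdot\tanh\cdot\ge 1-\theta$. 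The main obstacle will be the combinatorial bookkeeping in the main-case summation and the uniform-in-$\bx$ peeling in the edge case; the structural input enabling both is that the $\rho$-projection onto $[v_l,v_r]$ confines all the deviation between $\bx^{(a+1)}$ and $\bx^{(b+1)}$ to the wedge $D$, cleanly separating it from the path $[u,v]$ by distance at least $\bbd_{a+2}^{b+1}(u,v)$.
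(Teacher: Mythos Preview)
Your main case (both $u,v\in\GG^{(a+1)}$) is essentially the paper's argument. Both identify that the effective fields $\phi^{(a+1)},\phi^{(b+1)}$ differ at exactly one vertex of $[u,v]$, namely $v_*:=\argmin_{w\in[u,v]}\bd(w,v_{a+2})$, and then iterate Lemma~\ref{lemma:bp-eq} along $[v_*,v_{a+2}]$. Two remarks: your identification of this vertex as $v_{\max(\rho(u),\rho(v))}$ is correct only when $[u,v]$ meets the main path $[v_l,v_r]$; in general $v_*$ need not be any $v_j$. And the branching worry is unnecessary: at each step of the iteration only the neighbor toward $v_{a+2}$ carries a nonzero message difference, so no geometric sum over a $(d-1)$-ary tree arises and the condition $(d-1)(2\theta)^2<1$ is not needed.

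The genuine gap is in your edge case $\bbd_{a+2}^{b+1}(u,v)=0$. Your per-vertex peeling bound $1+O(\theta)$ accumulates multiplicatively over $|[u,v]\cap D|$ vertices, and this cardinality is \emph{not} bounded by an absolute constant: take $u\in\GG^{(a+1)}$ and $v$ deep in a subtree hanging off $v_{a+2}$, so that $[v_{a+2},v]\subset D$ has arbitrarily large length. Thus your argument as written only yields $(1+O(\theta))^{|[u,v]\cap D|}$, which is unbounded. The paper handles this case differently. It splits $\tZ_{[u,v]}$ once, at the edge $(v_{a+1},v_{a+2})$, into factors on $\HH_1=[u,v]\cap\GG^{(a+1)}$ and $\HH_2=[u,v]\setminus\GG^{(a+1)}$. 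Under $\bx^{(a+1)}$ every effective field on $\HH_2$ vanishes, so by $\pm$-symmetry the $\HH_2$-factor is independent of the interface spin; under $\bx^{(b+1)}$ one uses AM--GM together with the Cauchy--Schwarz inequality $4\tZ_{\HH_2}^{+}\tZ_{\HH_2}^{-}\ge \tZ_{\HH_2}(\boo)^2$ to lower bound the denominator. Since $\phi^{(a+1)}|_{\HH_1}=\phi^{(b+1)}|_{\HH_1}$, the $\HH_1$-factors cancel, and one obtains the single-factor bound $\oZ^{(a+1)}/\oZ^{(b+1)}\le\cosh(\beta)$ with no dependence on $|\HH_2|$. (Your peeling can in fact be repaired by tracking the residual BP message $h_k$ at each step and observing $|h_k|\le|\phi(w_k)+h_{k-1}|$, which makes each normalized peeling factor in $\oZ^{(b+1)}$ at least $1$ rather than merely $1-\theta$; but this is not what you wrote, and the paper's one-cut argument is cleaner.)
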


To prove this lemma, we need the following notion that extends \eqref{eq:def:ZGh}.
Take any $\HH\subset \GG$, $\GG\subset \GG'$, and $h\in \{\pm 1\}^\HH$, and define
 \begin{equation}\label{eq:def:ZGh:prime}
 \tZ_{\GG}^{\GG',h}(\bx) := \sum_{\substack{(\sigma_v)_{v\in\GG}\\ \sigma_v=h(v),\forall v\in\HH}} \exp\left( \sum_{(u,v)\in E(\GG)} \beta_{u,v}\sigma_u\sigma_v + \sum_{v\in \GG} (x(v)+\zeta_{\GG'}^\bx(v))\sigma_v \right).
 \end{equation}
 The only difference compared to \eqref{eq:def:ZGh} is that we use $\zeta_{\GG'}^{\bx}(v)$ in the RHS instead of $\zeta_{\GG}^{\bx}$. Recalling the definition \eqref{eq:def:zetaG}, this means that we purposefully avoid considering the induced external fields coming from some branches outside of $\GG$ by setting $\GG'\supsetneq \GG$ if necessary.

\begin{proof}[Proof of Lemma \ref{lemma:key-est-prep1}]
We first consider the case where $u,v \not\in \GG^{(a+1)}$. We have that $\oZ^{(a+1)}_{u,v}(\bx) = 1$ and $\oZ^{(b+1)}_{u,v}(\bx) \ge 1$, and the second inequality of \eqref{eq:B4} holds.
If $u,v \not\in \GG^{(b+1)}$, then $\oZ^{(b+1)}_{u,v}(\bx) = 1$ and the first inequality of \eqref{eq:B4} holds; if at least one of $u, v$ is in $\GG^{(b+1)}$, we must have that $\bbd_{a+2}^{b+1}(u,v) = 0$, and the first inequality of \eqref{eq:B4} also holds by taking $C\ge 1$.

We next consider the case where at least one of $u, v$ is in $\GG^{(a+1)}$; and by symmetry we assume that $u\in\GG^{(a+1)}$.
If $v\not\in\GG^{(a+1)}$, we have that $\bbd_{a+2}^{b+1}(u,v) = 0$, and the first inequality of \eqref{eq:B4} holds (by taking $C\ge 1$).
Also we must have $v_{a+1}, v_{a+2}\in[u,v]$. Let $\HH_1$ be the subgraph generated by vertices in $[u,v]\cap\GG^{(a+1)}$, and $\HH_2$ be the subgraph generated by vertices in $[u,v]\setminus\GG^{(a+1)}$.
We have that
\begin{equation}  \label{eq:tZ-ratio}
\begin{split}
&\frac{\oZ^{(a+1)}_{u,v}(\bx)}{\oZ^{(b+1)}_{u,v}(\bx)}
=
\frac{\tZ_{[u,v]}(\bx^{(a+1)})}{\tZ_{[u,v]}(\bx^{(b+1)
})}\\
&=
\frac{\sum_{\sigma, \sigma'\in\{\pm 1\}}\tZ_{\HH_1}^{[u,v];\{v_{a+1}\mapsto \sigma\}}(\bx^{(a+1)})\tZ_{\HH_2}^{[u,v];\{v_{a+2}\mapsto \sigma'\}}(\bx^{(a+1)})e^{\beta_{v_{a+1},v_{a+2}}\sigma\sigma'}}
{\sum_{\sigma, \sigma'\in\{\pm 1\}}\tZ_{\HH_1}^{[u,v];\{v_{a+1}\mapsto \sigma\}}(\bx^{(b+1)})\tZ_{\HH_2}^{[u,v];\{v_{a+2}\mapsto \sigma'\}}(\bx^{(b+1)})e^{\beta_{v_{a+1},v_{a+2}}\sigma\sigma'}}.
\end{split}
\end{equation}
Here the second equality is by expanding $\tZ_{[u,v]}(\bx^{(a+1)})$ and $\tZ_{[u,v]}(\bx^{(b+1)})$ in terms of the spin at $v_{a+1}$ and $v_{a+2}$.
We first consider the numerator. 
Since $x^{(a+1)}(v)=0$ for any $v\not\in\GG^{(a+1)}$, we have 
\[
2\tZ_{\HH_2}^{[u,v];\{v_{a+2}\mapsto \sigma'\}}(\bx^{(a+1)}) =
\tZ_{\HH_2}^{[u,v]}(\boo)
\]
for any $\sigma'\in\{\pm 1\}$.
Thus the numerator (of the last line in \eqref{eq:tZ-ratio}) equals
\begin{equation}  \label{eq:tZ-ratio-nu}
\tZ_{\HH_2}^{[u,v]}(\boo)
\cosh(\beta_{v_{a+1},v_{a+2}})\sum_{\sigma\in\{\pm 1\}}\tZ_{\HH_1}^{[u,v];\{v_{a+1}\mapsto \sigma\}}(\bx^{(a+1)}).    
\end{equation}
For the denominator (of the last line in \eqref{eq:tZ-ratio}), by Cauchy-Schwartz we have
\[
4\tZ_{\HH_2}^{[u,v];\{v_{a+2}\mapsto 1\}}(\bx^{(b+1)})\tZ_{\HH_2}^{[u,v];\{v_{a+2}\mapsto -1\}}(\bx^{(b+1)})
\ge \tZ_{\HH_2}^{[u,v]}(\boo)^2 .
\]
Thus the denominator (of the last line in \eqref{eq:tZ-ratio}) is at least
\begin{equation}  \label{eq:tZ-ratio-de}
\tZ_{\HH_2}^{[u,v]}(\boo)\sum_{\sigma\in\{\pm 1\}}\tZ_{\HH_1}^{[u,v];\{v_{a+1}\mapsto \sigma\}}(\bx^{(b+1)}).
\end{equation}
Since $x^{(a+1)}(v)=x^{(b+1)}(v)$ for $v\in\GG^{(a+1)}$, we have $\tZ_{\HH_1}^{[u,v];\{v_{a+1}\mapsto \sigma\}}(\bx^{(a+1)})=\tZ_{\HH_1}^{[u,v];\{v_{a+1}\mapsto \sigma\}}(\bx^{(b+1)})$ for each $\sigma\in\{\pm 1\}$.
Then by taking the ratio of \eqref{eq:tZ-ratio-nu} over \eqref{eq:tZ-ratio-de}, and using \eqref{eq:tZ-ratio}, we conclude that
\[
\frac{\oZ^{(a+1)}_{u,v}(\bx)}{\oZ^{(b+1)}_{u,v}(\bx)}
\le
 \cosh(\beta_{v_{a+1},v_{a+2}}) \le \cosh(\beta),
\]
and this is bounded by constant.
So the second inequality of \eqref{eq:B4} holds.

Finally, we study the case where both $u, v \in \GG^{(a+1)}$.
Let $v_* = \argmin_{w\in [u,v]}\bd(w, v_{a+2})$.
Using the definitions we have
\begin{equation}  \label{eq:ratio-Z-up-lo-bd}
e^{-|\zeta_{[u,v]}^{\bx^{(a+1)}}(v_*) - \zeta_{[u,v]}^{\bx^{(b+1)}}(v_*)|}
\le
\frac{\oZ^{(a+1)}_{u,v}(\bx)}{\oZ^{(b+1)}_{u,v}(\bx)}
=
\frac{\tZ_{[u,v]}(\bx^{(a+1)})}{\tZ_{[u,v]}(\bx^{(b+1)
})}
\le e^{|\zeta_{[u,v]}^{\bx^{(a+1)}}(v_*) - \zeta_{[u,v]}^{\bx^{(b+1)}}(v_*)|}.
\end{equation}
By using Lemma \ref{lemma:bp-eq} on each edge in the path $[v_*, v_{a+2}]$, and the fact that $x^{(a+1)}(w)=x^{(b+1)}(w)$ for each $w\in \GG^{(a+1)}$, we have
\[
\begin{split}
&\tanh(|\zeta_{[u,v]}^{\bx^{(a+1)}}(v_*) - \zeta_{[u,v]}^{\bx^{(b+1)}}(v_*)|) \\
\le & (2\theta)^{\bd(v_*, v_{a+2})} \tanh(
|\sum_{w\sim v_{a+2}, w\neq v_{a+1}} \zeta_{w\to v_{a+2}}^{\bx^{(b+1)}} + x(v_{a+2}) - \zeta_{w\to v_{a+2}}^{\bx^{(a+1)}}|
)\\
< & (2\theta)^{\bd(v_*, v_{a+2})} \le (2\theta)^{\bbd_{a+2}^{b+1}(u,v)}.
\end{split}
\]
Thus with \eqref{eq:ratio-Z-up-lo-bd} we have $\left|\frac{\oZ^{(a+1)}_{u,v}(\bx)^2}{\oZ^{(b+1)}_{u,v}(\bx)^2} - 1\right| < \frac{2(2\theta)^{\bbd_{a+2}^{b+1}(u,v)}}{1-(2\theta)^{\bbd_{a+2}^{b+1}(u,v)}}$, and our conclusion follows.
\end{proof}
For $\bx_-, \bx_+ \in \R^{\TT_R}$, and $l\le a\le b\le r$, we let
\begin{equation}\label{eq:def:kappa}
\kappa_a^b(\bx_-,\bx_+):=|\{\ell: a\le\ell\le b, x_-(v)=x_+(v), \forall v\in \TT_R\setminus\GG^{(\ell-1)}\}|.
\end{equation}
We also write $\kappa^b(\bx_-,\bx_+):=\kappa_l^b(\bx_-,\bx_+)$.
We note that $\kappa_a^b(\bx_-,\bx_+)$ is the distance from $b-1$ to the set $\{\rho(v): v\in \TT_R, x_-(v)\neq x_+(v)\}$ (recall that $\rho(v)=\argmin_{l\le\ell\le r}\bd(v_\ell, v)$).
\begin{lemma}\label{lemma:key-est-prep2}
For any $l\le b<r$, vector $\bx_-,\bx_+\in\R^{\TT_R}$, and $u, v\in\TT_R$, assuming $\kappa^{b+1}(\bx_-,\bx_+)\ge 1$ we have
\begin{equation}   \label{eq:key-est-prep2}
\frac{\oZ^{(b+1)}_{u,v}(\bx_-)^2\oZ^{(b)}_{u,v}(\bx_+)^2}{\oZ^{(b)}_{u,v}(\bx_-)^2\oZ^{(b+1)}_{u,v}(\bx_+)^2}
<
1 + C(2\theta)^{\kappa^{b+1}(\bx_-,\bx_+)},    
\end{equation}
where $C$ is an absolute constant.
\end{lemma}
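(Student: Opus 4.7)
The plan is to adapt the argument of Lemma \ref{lemma:key-est-prep1} to the four-partition-function ratio $R$ on the left-hand side of \eqref{eq:key-est-prep2}. First, one disposes of the degenerate cases in which one of the four normalized partition functions is defined to equal $1$: if $u,v \notin \GG^{(b+1)}$ then all four equal $1$ and $R=1$; the remaining ``mixed'' boundary cases (e.g.\ when at least one of $u,v$ lies in $\GG^{(b+1)} \setminus \GG^{(b)}$) reduce, as in the proof of Lemma \ref{lemma:key-est-prep1}, to a uniform constant upper bound that can be absorbed into $C$ using the hypothesis $\kappa^{b+1} \ge 1$. Hence one may assume the principal case $u,v \in \GG^{(b)}$, in which all four normalized partition functions are genuine partition functions on the path $[u,v]$.

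In this principal case the passage from $\bx^{(b)}$ to $\bx^{(b+1)}$ changes nothing on $[u,v]$ except the single induced external field contributed by the subtree $\GG^{(b+1)} \setminus \GG^{(b)}$ rooted at $v_{b+1}$, and this contribution enters the path at a single vertex $v_* \in [u,v]$ (the vertex of $[u,v]$ closest to $v_{b+1}$). Setting
\[
\eta_\pm := \zeta^{\bx_\pm^{(b+1)}}_{[u,v]}(v_*) - \zeta^{\bx_\pm^{(b)}}_{[u,v]}(v_*),
\]
one obtains the identity
\[
\oZ^{(b+1)}_{u,v}(\bx_\pm) / \oZ^{(b)}_{u,v}(\bx_\pm) = \iprod{e^{\eta_\pm \sigma_{v_*}}}_{\pi_{[u,v]}^{\bx_\pm^{(b)}}}.
\]
Splitting $\log R$ into two telescoping differences (one varying $\eta_-\to\eta_+$ with the background measure fixed, and one varying the measure $\bx_-^{(b)}\to\bx_+^{(b)}$ with $\eta_+$ fixed), and using $|\eta_\pm|\le \beta$ (by \eqref{eq:boundutov}) to control the Lipschitz constants of $\eta\mapsto \log \iprod{e^{\eta\sigma_{v_*}}}_\mu$ in both arguments, one bounds $\tfrac12|\log R|$ by an absolute constant times
\[
|\eta_- - \eta_+| \;+\; |\eta_+|\cdot|m_- - m_+|,
\]
where $m_\pm := \iprod{\sigma_{v_*}}_{\pi_{[u,v]}^{\bx_\pm^{(b)}}}$.

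The remaining task is to bound $|\eta_- - \eta_+|$ and $|m_- - m_+|$ by $C(2\theta)^{\kappa^{b+1}}$. Both are Belief Propagation outputs at $v_*$ whose defining BP recursions must be traced back (along the branch through $v_b$ toward $v_{b+1}$ and into the subtree $\GG^{(b+1)} \setminus \GG^{(b)}$ for $\eta_\pm$, and along the other branches meeting $[u,v]$ for $m_\pm$) to vertices where $\bx_-$ and $\bx_+$ are permitted to differ. By the definition of $\kappa^{b+1}$, every such BP path must cross at least $\kappa^{b+1}$ edges before reaching a vertex of disagreement, since $\{\rho(v): \bx_-(v)\neq\bx_+(v)\} \subset [l,\,b+1-\kappa^{b+1}]$. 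Iterating Lemma \ref{lemma:bp-eq} along these paths, exactly as in the proof of Lemma \ref{lemma:key-est-prep1}, produces a factor of $2\theta$ per traversed edge and yields $\tanh(|\eta_--\eta_+|) \le (2\theta)^{\kappa^{b+1}}$ and $|m_- - m_+| \le C(2\theta)^{\kappa^{b+1}}$, from which \eqref{eq:key-est-prep2} follows.

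The main technical obstacle is the careful geometric case analysis: correctly identifying $v_*$ and the relevant BP path from $v_*$ into the region of $\bx_\pm$-disagreement in each subcase (e.g.\ whether $v_{b+1}$ lies on $[u,v]$ or off of it, whether $v_*$ coincides with $v_b$, and whether $v_*$ is an interior vertex or an endpoint of $[u,v]$), and verifying that in each case the iteration of Lemma \ref{lemma:bp-eq} can be carried out the full $\kappa^{b+1}$ times without losing the exponent.
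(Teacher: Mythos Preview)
Your outline has the right intuition but two genuine gaps.

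\textbf{The mixed cases are not constant bounds.} When $u,v\notin\GG^{(b)}$ but at least one lies in $\GG^{(b+1)}$, the ratio reduces to $\tZ_{[u,v]}(\bx_-^{(b+1)})^2/\tZ_{[u,v]}(\bx_+^{(b+1)})^2$, and you must show this is $1+O((2\theta)^{\kappa^{b+1}})$, not merely bounded: the hypothesis $\kappa^{b+1}\ge 1$ does not cap $\kappa^{b+1}$ from above, so a constant cannot be ``absorbed into $C$.'' The paper treats this case by locating the $v_*\in[u,v]$ closest to $v_\chi$ (where $\chi=b+1-\kappa^{b+1}$) and iterating Lemma~\ref{lemma:bp-eq} along $[v_*,v_\chi]$. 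Likewise, when exactly one of $u,v$ lies in $\GG^{(b)}$ the path $[u,v]$ contains $v_{b+1}$, so your ``single attachment vertex $v_*$'' picture is unavailable; the paper handles this together with the principal case.

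\textbf{The bound on $|\eta_--\eta_+|$ is a second-order estimate and does not follow from iterating Lemma~\ref{lemma:bp-eq}.} The quantity $\eta_--\eta_+$ compares the effect of switching on the field in $\GG^{(b+1)}\setminus\GG^{(b)}$ under two backgrounds that disagree only on $\GG^{(\chi)}$. Lemma~\ref{lemma:bp-eq} is a first-order contraction; iterating it yields either $|\eta_\pm|\lesssim(2\theta)^{\bd(v_*,v_{b+1})}$ (tracing toward the change region) or $|\eta_--\eta_+|\lesssim(2\theta)^{\bd(v_*,\,\GG^{(\chi)}\cap T_{w_0})}$ (tracing toward the disagreement region inside the subtree $T_{w_0}$). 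Neither distance is in general $\ge\kappa^{b+1}$: if $v_*$ hangs off the main path at $v_p$ with $\chi<p<b$ (so $w_0=v_p$ and $T_{w_0}$ contains the entire main path), the two exponents are $b+2-p$ and $p-\chi+1$, whose minimum is only about $\kappa^{b+1}/2$. Your claim that ``every such BP path must cross at least $\kappa^{b+1}$ edges before reaching a vertex of disagreement'' is therefore false as stated; $v_*$ can even lie inside $\GG^{(\chi)}$. (By contrast, your cross term $|\eta_+|\cdot|m_--m_+|$ \emph{does} work, via the triangle inequality $\bd(v_*,v_{b+1})+\bd(v_*,\GG^{(\chi)})\ge\kappa^{b+1}$.)

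The paper sidesteps the second-order issue by a different decomposition. It writes $\tZ_{u,v}(\bx)^2$ as the full-tree partition function $\tZ(\bx)^2$ divided by a product of pinned partition functions $\tZ_{\HH_j}^{\TT_R,\{w_j\mapsto\pm1\}}(\bx)$ over the components $\HH_j$ obtained by deleting the edges of $[u,v]$ (equation \eqref{eq:key-est-prep2:1}). In the four-ratio each such factor equals $1$ unless $\HH_j$ meets \emph{both} $\GG^{(\chi)}$ and $\GG^{(b+1)}\setminus\GG^{(b)}$, and at most one $\HH_j$ does. Applying the same identity to $\tZ(\bx)^2$ with the single edge $(v_b,v_{b+1})$ in place of $[u,v]$ reduces the entire four-ratio to a quantity controlled by the \emph{first-order} difference $|\zeta^{\bx_-}_{v_b\to v_{b+1}}-\zeta^{\bx_+}_{v_b\to v_{b+1}}|$, for which Lemma~\ref{lemma:bp-eq} iterated along $[v_{b+1},v_\chi]$ gives exactly $(2\theta)^{\kappa^{b+1}}$.
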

\begin{proof}
Denote $\chi:=b+1-\kappa^{b+1}(\bx_-,\bx_+)$, then $x_-(w)=x_+(w)$ for any $w\in \TT_R\setminus \GG^{(\chi)}$.
We first study the case where $u,v\not\in\GG^{(b)}$.
If in addition $u,v\not\in\GG^{(b+1)}$, then the LHS of \eqref{eq:key-est-prep2} equals $1$ and the statement holds.
If at least one of $u, v$ is in $\GG^{(b+1)}$, we have
\[
\frac{\oZ^{(b+1)}_{u,v}(\bx_-)^2\oZ^{(b)}_{u,v}(\bx_+)^2}{\oZ^{(b)}_{u,v}(\bx_-)^2\oZ^{(b+1)}_{u,v}(\bx_+)^2}
=
\frac{\tZ_{[u,v]}(\bx_-^{(b+1)})^2}{\tZ_{[u,v]}(\bx_+^{(b+1)})^2}
\le
e^{|\zeta_{[u,v]}^{\bx_-^{(b+1)}}(v_*) - \zeta_{[u,v]}^{\bx_+^{(b+1)}}(v_*)|}
\]
where $v_*=\argmin_{w\in[u,v]}\bd(w,v_{\chi})$.
As $x_-^{(b+1)}(w)=x_+^{(b+1)}(w)$ for any $w\not\in \GG^{(\chi)}$, by using Lemma \ref{lemma:bp-eq} for each edge in the path $[v_*, v_\chi]$ we have
\[
\tanh(|\zeta_{[u,v]}^{\bx_-^{(b+1)}}(v_*) - \zeta_{[u,v]}^{\bx_+^{(b+1)}}(v_*)|)
\le (2\theta)^{\bd(v_*,v_\chi)} \le (2\theta)^{\kappa^{b+1}(\bx_-,\bx_+)},
\]
and this implies \eqref{eq:key-est-prep2} (in the case where $u,v\not\in\GG^{(b)}$).

We next study the case where at least one of $u,v$ is in $\GG^{(b)}$. 
Let $w_1,\cdots, w_s$ denote all the vertices in $[u,v]$.
Consider the graph of $\TT_R$ removing the edges in $[u,v]$, 
and let the graphs $\HH_1, \cdots, \HH_s$ be its connected components that contain $w_1, \cdots, w_s$, respectively. We also recall the notation \eqref{eq:def:ZGh:prime}.
For $1\le j \le s$ and any $\bx\in \R^{\TT_R}$, note that 
\[
\zeta_{[u,v]}^{\bx}(w_j) + x(w_j) = \frac{1}{2} \log \left( \frac{ \tZ_{\HH_j}^{\TT_R,\{w_j\mapsto 1\}}(\bx)}{\tZ_{\HH_j}^{\TT_R,\{w_j\mapsto -1\}}(\bx)} \right).
\]
Thus, by plugging this into \eqref{eq:def:restrictedpartitionfunction:plain} with $\GG=[u,v]$, we can write
\begin{equation}  \label{eq:key-est-prep2:1}
\tZ_{u,v}(\bx)^2 = \frac{\tZ(\bx)^2}{\prod_{j=1}^s \tZ_{\HH_j}^{\TT_R,\{w_j\mapsto 1\}}(\bx)\tZ_{\HH_j}^{\TT_R,\{w_j\mapsto -1\}}(\bx)}.
\end{equation}
Plugging this formula into the LHS of \eqref{eq:key-est-prep2}, we then need to bound
\begin{equation}  \label{eq:key-est-prep2:2}
\frac{\tZ_{\HH_j}^{\TT_R,\{w_j\mapsto \sigma\}}(\bx_-^{(b)})\tZ_{\HH_j}^{\TT_R,\{w_j\mapsto \sigma\}}(\bx_+^{(b+1)})}{\tZ_{\HH_j}^{\TT_R,\{w_j\mapsto \sigma\}}(\bx_-^{(b+1)})\tZ_{\HH_j}^{\TT_R,\{w_j\mapsto \sigma\}}(\bx_+^{(b)})} .
\end{equation}
for each $j=1,\cdots, s$ and $\sigma\in\{-1,1\}$; and
\begin{equation}  \label{eq:key-est-prep2:21}
\frac{\tZ(\bx_-^{(b+1)})\tZ(\bx_+^{(b)})}{\tZ(\bx_-^{(b)})\tZ(\bx_+^{(b+1)})}
\end{equation}
We note that $x_-^{(b)}(w)=x_+^{(b)}(w)$ and $x_-^{(b+1)}(w)=x_+^{(b+1)}(w)$ for any $w\not\in \GG^{(\chi)}$;
and $x_-^{(b)}(w)=x_-^{(b+1)}(w)$ and $x_+^{(b)}(w)=x_+^{(b)}(w)$ for any $w\in \GG^{(b)}$ or $w\not\in \GG^{(b+1)}$.
This means that for any $\HH_j$ and $\sigma\in\{\pm 1\}$, \eqref{eq:key-est-prep2:2} equals $1$, 
unless $\HH_j$ intersects both $\GG^{(\chi)}$ and $\GG^{(b+1)}\setminus \GG^{(b)}$. However, this means that $v_b, v_{b+1} \in \HH_j$, which happens to at most one of $\HH_1, \cdots, \HH_s$ (since they are disjoint).

We hold on estimating \eqref{eq:key-est-prep2:2} for a particular $j$, and bound \eqref{eq:key-est-prep2:21} first.
Similar to \eqref{eq:key-est-prep2:1}, we have
\begin{equation}   \label{eq:key-est-prep2:1s}
\tZ_{v_b, v_{b+1}}(\bx)^2 = \frac{\tZ(\bx)^2}{ \tZ_{\GG^{(b)}}^{\TT_R,\{v_b\mapsto 1\}}(\bx)\tZ_{\GG^{(b)}}^{\TT_R,\{v_b\mapsto -1\}}(\bx)
\tZ_{\TT_R\setminus\GG^{(b)}}^{\TT_R,\{v_{b+1}\mapsto 1\}}(\bx)\tZ_{\TT_R\setminus\GG^{(b)}}^{\TT_R,\{v_{b+1}\mapsto -1\}}(\bx)}.    
\end{equation}
We note that for $\GG^{(b)}$ and $\TT_R\setminus\GG^{(b)}$, neither of them intersects both $\GG^{(\chi)}$ and $\GG^{(b+1)}\setminus \GG^{(b)}$.
So for $f\in \{\tZ_{\GG^{(b)}}^{\TT_R,\{v_b\mapsto 1\}}, \tZ_{\GG^{(b)}}^{\TT_R,\{v_b\mapsto -1\}}, 
\tZ_{\TT_R\setminus\GG^{(b)}}^{\TT_R,\{v_{b+1}\mapsto 1\}}, \tZ_{\TT_R\setminus\GG^{(b)}}^{\TT_R,\{v_{b+1}\mapsto -1\}}\}$, we have
\[
\frac{f(\bx_-^{(b+1)})f(\bx_+^{(b)})}{f(\bx_-^{(b)})f(\bx_+^{(b+1)})}=1.
\]
Thus by \eqref{eq:key-est-prep2:1s} we have
\[
\frac{\tZ(\bx_-^{(b+1)})\tZ(\bx_+^{(b)})}{\tZ(\bx_-^{(b)})\tZ(\bx_+^{(b+1)})}
=
\frac{\tZ_{v_b,v_{b+1}}(\bx_-^{(b+1)})\tZ_{v_b,v_{b+1}}(\bx_+^{(b)})}{\tZ_{v_b,v_{b+1}}(\bx_-^{(b)})\tZ_{v_b,v_{b+1}}(\bx_+^{(b+1)})}
\]
The RHS can be expanded as
\begin{equation}  \label{eq:key-est-prep2:1expr}
\frac{\cosh \left\{\zeta_{v_b\to v_{b+1}}^{\bx_-}+x_-(v_{b+1}) + 
\sum_{w\sim v_{b+1}, w\neq v_{b}} \zeta_{w\to v_{b+1}}^{\bx_-}
 \right\} \cosh(\zeta_{v_b\to v_{b+1}}^{\bx_+})}{\cosh(\zeta_{v_b\to v_{b+1}}^{\bx_-})\cosh\left\{\zeta_{v_b\to v_{b+1}}^{\bx_+}+x_+(v_{b+1}) + \sum_{w\sim v_{b+1}, w\neq v_{b}} \zeta_{w\to v_{b+1}}^{\bx_+}\right\}}.    
\end{equation}
By our assumption of $\kappa^{b+1}(\bx_-,\bx_+)\ge 1$, we have $x_-(v_{b+1})=x_+(v_{b+1})$ and $\sum_{w\sim v_{b+1}, w\neq v_{b}} \zeta_{w\to v_{b+1}}^{\bx_-} = \sum_{w\sim v_{b+1}, w\neq v_{b}} \zeta_{w\to v_{b+1}}^{\bx_+}$. Thus by the expression \eqref{eq:key-est-prep2:1expr} we have
\begin{equation}  \label{eq:key-est-prep2:bdexp}
\frac{\tZ(\bx_-^{(b+1)})\tZ(\bx_+^{(b)})}{\tZ(\bx_-^{(b)})\tZ(\bx_+^{(b+1)})}
\le e^{2|\zeta_{v_b\to v_{b+1}}^{\bx_-} - \zeta_{v_b\to v_{b+1}}^{\bx_+}|}.    
\end{equation}
By using Lemma \ref{lemma:bp-eq} for each edge in $[v_{b+1}, v_\chi]$, we get
\[
\tanh(|\zeta_{v_b\to v_{b+1}}^{\bx_-} - \zeta_{v_b\to v_{b+1}}^{\bx_+}|) \le (2\theta)^{\kappa^{b+1}(\bx_-,\bx_+)}.
\]
This with \eqref{eq:key-est-prep2:bdexp} implies that
\begin{equation} \label{eq:key-est-prep2:3}
\frac{\tZ(\bx_-^{(b+1)})\tZ(\bx_+^{(b)})}{\tZ(\bx_-^{(b)})\tZ(\bx_+^{(b+1)})}<1+C'(2\theta)^{\kappa^{b+1}(\bx_-,\bx_+)}    
\end{equation}
for some constant $C'$.

Now we go back to bound \eqref{eq:key-est-prep2:2}, for some $j$ such that $\HH_j$ intersects both $\GG^{(\chi)}$ and $\GG^{(b+1)}\setminus \GG^{(b)}$.
Such bound can be directly obtained from \eqref{eq:key-est-prep2:3}, by exchanging $\bx_-$ and $\bx_+$, and taking the following special case of $\bx_\pm$: first set $x_-(v)=x_+(v)=0$ for $v\not\in\HH_j$, then send $x_-(w_j)=x_+(w_j)$ to $\infty$ (if $\sigma=1$) or $-\infty$ (if $\sigma=-1$).
So we conclude that \eqref{eq:key-est-prep2:2} for such particular $j$ is also bounded by $1+C'(2\theta)^{\kappa^{b+1}(\bx_-,\bx_+)}$.
Then the LHS of \eqref{eq:key-est-prep2} is bounded by $(1+C'(2\theta)^{\kappa^{b+1}(\bx_-,\bx_+)})^4$, and our conclusion follows.
\end{proof}

\subsection{Proof of the key estimate}

We conclude this section by establishing Proposition \ref{prop:key-esti}.

\begin{proof}[Proof of Proposition \ref{prop:key-esti}]
Recall the definition of $\kappa^{b'}_{a'}$ \eqref{eq:def:kappa} and $\kappa^{b'}$ for $l\le a'\le b' \le r$. 

We first prove  \eqref{eq:key-est1}.
We note that for each $l\le a'\le b'\le r$, and $t\ge 0$, we have $\kappa_{a'}^{b'}(\btau^-, \btau^+) = \kappa_{a'}^{b'}(\bY^-_t, \bY^+_t)$.
Thus in this proof, we write $\kappa_{a'}^{b'} = \kappa_{a'}^{b'}(\btau^-, \btau^+)$ and $\kappa^{b'}=\kappa^{b'}(\btau^-, \btau^+)$.

First consider the case where $\kappa_{a+2}^{b+1}=0$.
We have $E_b\le R_b^+\vee R_b^-$ and $\prod_{a<\ell<b}P_\ell \le \prod_{a<\ell<b}R_\ell^+ \vee \prod_{a<\ell<b}R_\ell^-$, so
\begin{equation}  \label{eq:EPze-ca}
\begin{split}
&
E_b\prod_{a<\ell<b}P_\ell \le
\prod_{a<\ell\le b} R_\ell^+ \vee \prod_{a<\ell\le b} R_\ell^-
\\
=&
\prod_{i=0}^{k+1}\frac{\oZ_{u_i,u_{i-1}}^{(a+1)}(\bY_{t_i}^+)^2}
{\oZ_{u_i,u_{i-1}}^{(b+1)}(\bY_{t_i}^+)^2}
\frac{C_l^{(b+1)}(\bY_{t_-}^+)}{C_l^{(a+1)}(\bY_{t_-}^+)}
\frac{C_r^{(b+1)}(\bY_{t_+}^+)}{C_r^{(a+1)}(\bY_{t_+}^+)}
\prod_{1\le j <l}
\frac{U_j^{(b+1)}(\bY_{t_-}^+)}{U_j^{(a+1)}(\bY_{t_-}^+)}
\prod_{r< j \le n}
\frac{W_j^{(b+1)}(\bY_{t_+}^+)}{W_j^{(a+1)}(\bY_{t_+}^+)}
\\
& \vee
\prod_{i=0}^{k+1}\frac{\oZ_{u_i,u_{i-1}}^{(a+1)}(\bY_{t_i}^-)^2}
{\oZ_{u_i,u_{i-1}}^{(b+1)}(\bY_{t_i}^-)^2}
\frac{C_l^{(b+1)}(\bY_{t_-}^-)}{C_l^{(a+1)}(\bY_{t_-}^-)}
\frac{C_r^{(b+1)}(\bY_{t_+}^-)}{C_r^{(a+1)}(\bY_{t_+}^-)}
\prod_{1\le j <l}
\frac{U_j^{(b+1)}(\bY_{t_-}^-)}{U_j^{(a+1)}(\bY_{t_-}^-)}
\prod_{r< j \le n}
\frac{W_j^{(b+1)}(\bY_{t_+}^-)}{W_j^{(a+1)}(\bY_{t_+}^-)}
.    
\end{split}    
\end{equation}
By Lemma \ref{lemma:key-est-prep1-UWC} and \ref{lemma:key-est-prep1}, 
we have
\begin{equation}  \label{eq:key-est-pf}
\prod_{a<\ell\le b} R_\ell^+,\; \prod_{a<\ell\le b} R_\ell^-
<\prod_{i=0}^{k+1}
(1 + C(2\theta)^{\bbd_{a+2}^{b+1}(u_i,u_{i-1})})
(1+C(2\theta)^{a+2-l})^{l}
(1+C(2\theta)^{r-b-1})^{n-r+1}.
\end{equation}

We next consider the case where $\kappa_{a+2}^{b+1}\ge 1$.
Without loss of generality, we assume that $R_b^+\ge R_b^-$.
We have
\begin{equation}  \label{eq:EP-ini-bd}
E_b\prod_{a<\ell<b}P_\ell \le
E_b\prod_{a<\ell<b}R_\ell^- =
\left(\frac{R_b^+}{R_b^-}-1\right)
\prod_{a<\ell\le b}R_\ell^- .    
\end{equation}
For factor $\prod_{a<\ell\le b}R_\ell^-$, it is again bounded using \eqref{eq:key-est-pf}.
It remains to bound $\frac{R_b^+}{R_b^-}$.
Recall the definition of $R_b^{\pm}$ from \eqref{eq:defn-R}.
We consider the factors one by one.
For each $0\le i \le k+1$, by Lemma \ref{lemma:key-est-prep2} we have
\begin{equation} \label{eq:4Zratio-bd-1}
\frac{\oZ_{u_i,u_{i-1}}^{(b)}(\bY_{t_i}^+)^2\oZ_{u_i,u_{i-1}}^{(b+1)}(\bY_{t_i}^-)^2}
{\oZ_{u_i,u_{i-1}}^{(b+1)}(\bY_{t_i}^+)^2\oZ_{u_i,u_{i-1}}^{(b)}(\bY_{t_i}^-)^2}
< 1 + C(2\theta)^{\kappa^{b+1}}
\le 1 + C(2\theta)^{\kappa_{a+2}^{b+1}}.
\end{equation}
Recall that for any $l\le a' \le r$, we denote $\bbd_{a'}=\bbd_{a'}^{a'}$.
If $\bbd_{b+1}(u_i,u_{i-1})\ge 1$, by Lemma \ref{lemma:key-est-prep1}, we also have
\begin{equation}  \label{eq:4Zratio-bd-2}
\frac{\oZ_{u_i,u_{i-1}}^{(b)}(\bY_{t_i}^+)^2\oZ_{u_i,u_{i-1}}^{(b+1)}(\bY_{t_i}^-)^2}
{\oZ_{u_i,u_{i-1}}^{(b+1)}(\bY_{t_i}^+)^2\oZ_{u_i,u_{i-1}}^{(b)}(\bY_{t_i}^-)^2}
< 
\frac{1 + C(2\theta)^{\bbd_{b+1}(u_i,u_{i-1})}}{1 - C(2\theta)^{\bbd_{b+1}(u_i,u_{i-1})}}
<
1 + C'(2\theta)^{\bbd_{b+1}(u_i,u_{i-1})},
\end{equation}
for some constant $C'>C$. Thus by combining \eqref{eq:4Zratio-bd-1} and \eqref{eq:4Zratio-bd-2}, we have
\begin{equation}  \label{eq:4Zratio-bd}
\frac{\oZ_{u_i,u_{i-1}}^{(b)}(\bY_{t_i}^+)^2\oZ_{u_i,u_{i-1}}^{(b+1)}(\bY_{t_i}^-)^2}
{\oZ_{u_i,u_{i-1}}^{(b+1)}(\bY_{t_i}^+)^2\oZ_{u_i,u_{i-1}}^{(b)}(\bY_{t_i}^-)^2}
<
1 + C'(2\theta)^{\bbd_{b+1}(u_i,u_{i-1})\vee \kappa_{a+2}^{b+1}}.    
\end{equation}
If $u_i, u_{i-1} \not\in \GG^{(b+1)}$, the LHS equals $1$; otherwise, we have
\[
(\bbd_{b+1}(u_i,u_{i-1})- \kappa_{a+2}^{b+1}) \vee 0
\ge (\bbd_{b+1}(u_i,u_{i-1})- (b-a)) \vee 0
= \bbd_{a+1}^{b+1}(u_i,u_{i-1}).
\]
By plugging this into \eqref{eq:4Zratio-bd} we have
\begin{equation}  \label{eq:key-est-pf3}
\frac{\oZ_{u_i,u_{i-1}}^{(b)}(\bY_{t_i}^+)^2\oZ_{u_i,u_{i-1}}^{(b+1)}(\bY_{t_i}^-)^2}
{\oZ_{u_i,u_{i-1}}^{(b+1)}(\bY_{t_i}^+)^2\oZ_{u_i,u_{i-1}}^{(b)}(\bY_{t_i}^-)^2}
<
1 + C'(2\theta)^{\bbd_{a+1}^{b+1}(u_i,u_{i-1}) +  \kappa_{a+2}^{b+1}}.    
\end{equation}

By Lemma \ref{lemma:key-est-prep1-UWC},
we have
$\frac{C_l^{(b+1)}(\bY_{t_-}^+)}{C_l^{(b)}(\bY_{t_-}^+)} < 1+C(2\theta)^{b+1-l}$, 
$\frac{C_l^{(b)}(\bY_{t_-}^-)}{C_l^{(b+1)}(\bY_{t_-}^-)} < (1-C(2\theta)^{b+1-l})^{-1}$;
and $\frac{U_j^{(b+1)}(\bY_{t_-}^+)}{U_j^{(b)}(\bY_{t_-}^+)} < 1+C(2\theta)^{b+1-l}$,
$\frac{U_j^{(b)}(\bY_{t_-}^-)}{U_j^{(b+1)}(\bY_{t_-}^-)} < (1-C(2\theta)^{b+1-l})^{-1}$,
for each $1\le j <l$.
Thus we have
\begin{equation}  \label{eq:key-est-pf4}
\frac{C_l^{(b+1)}(\bY_{t_-}^+)}{C_l^{(b)}(\bY_{t_-}^+)}\frac{C_l^{(b)}(\bY_{t_-}^-)}{C_l^{(b+1)}(\bY_{t_-}^-)}, \;
\frac{U_j^{(b+1)}(\bY_{t_-}^+)}{U_j^{(b)}(\bY_{t_-}^+)}
\frac{U_j^{(b)}(\bY_{t_-}^-)}{U_j^{(b+1)}(\bY_{t_-}^-)}< 1+C'(2\theta)^{a+1-l+\kappa_{a+2}^{b+1}}.
\end{equation}

By Lemma \ref{lemma:key-est-UVC}, each of $\frac{C_r^{(b+1)}(\bY_{t_-}^+)}{C_r^{(b+1)}(\bY_{t_-}^-)}$, $\frac{C_r^{(b)}(\bY_{t_-}^-)}{C_r^{(b)}(\bY_{t_-}^+)}$
and $\frac{W_j^{(b+1)}(\bY_{t_-}^+)}{W_j^{(b+1)}(\bY_{t_-}^-)}$, $\frac{W_j^{(b)}(\bY_{t_-}^-)}{W_j^{(b)}(\bY_{t_-}^+)}$, $r< j \le n$, is bounded by $1+C(2\theta)^{r-b-1+\kappa_{a+2}^{b+1}}$.
So we have
\begin{equation}  \label{eq:key-est-pf5}
\frac{C_r^{(b+1)}(\bY_{t_-}^+)}{C_r^{(b)}(\bY_{t_-}^+)}\frac{C_r^{(b)}(\bY_{t_-}^-)}{C_r^{(b+1)}(\bY_{t_-}^-)},\;
\frac{W_j^{(b+1)}(\bY_{t_-}^+)}{W_j^{(b)}(\bY_{t_-}^+)}
\frac{W_j^{(b)}(\bY_{t_-}^-)}{W_j^{(b+1)}(\bY_{t_-}^-)}
< 1+C'(2\theta)^{r-b-1+\kappa_{a+2}^{b+1}}.    
\end{equation}
By putting together \eqref{eq:key-est-pf3}, \eqref{eq:key-est-pf4}, \eqref{eq:key-est-pf5}, we have
\[
\begin{split}
\frac{R_b^+}{R_b^-}
&\le \prod_{i=0}^{k+1}(1 + C'(2\theta)^{\bbd_{a+1}^{b+1}(u_i,u_{i-1})+\kappa_{a+2}^{b+1}})
(1+C'(2\theta)^{a+1-l+\kappa_{a+2}^{b+1}})^{l}
(1+C'(2\theta)^{r-b-1+\kappa_{a+2}^{b+1}})^{n-r+1} \\
&
\le
1+
(2\theta)^{\kappa_{a+2}^{b+1}}\prod_{i=0}^{k+1}(1 + C'(2\theta)^{\bbd_{a+1}^{b+1}(u_i,u_{i-1})})
(1+C'(2\theta)^{a+1-l})^{l}
(1+C'(2\theta)^{r-b-1})^{n-r+1}.
\end{split}
\]
Thus with \eqref{eq:EP-ini-bd} and the bound of $\prod_{a<\ell\le b}R_\ell^-$ by \eqref{eq:key-est-pf}, 
we conclude that
\[
E_b\prod_{a<\ell<b}P_\ell \le
(2\theta)^{\kappa_{a+2}^{b+1}}
\prod_{i=0}^{k+1}(1 + C''(2\theta)^{\bbd_{a+1}^{b+1}(u_i,u_{i-1})})
(1+C''(2\theta)^{a+1-l})^{l}
(1+C''(2\theta)^{r-b-1})^{n-r+1},
\]
where $C''$ is another constant.
Note that this also holds when $\kappa_{a+2}^{b+1}=0$ (by \eqref{eq:EPze-ca}).
Thus we always have
\begin{multline}  \label{eq:esti-key:2}
\int fE_b\prod_{a<\ell<b}P_\ell d\Lambda \\
\le
\prod_{i=0}^{k+1}(1 + C''(2\theta)^{\bbd_{a+1}^{b+1}(u_i,u_{i-1})})
(1+C''(2\theta)^{a+1-l})^{l}
(1+C''(2\theta)^{r-b-1})^{n-r+1}\int (2\theta)^{\kappa_{a+2}^{b+1}}fd\Lambda.
\end{multline}
From the construction of $\Gamma$ we have $\P_\Gamma(b-a-\kappa_{a+2}^{b+1} \ge j) \le \theta^j$ for any $j\in \Z_+$.
Since $\Gamma(\cdot\mid \FF_{a+1} )=\Lambda(\cdot\mid \FF_{a+1} )$ we would also have $\P_\Lambda(b-a-\kappa_{a+2}^{b+1} \ge j) \le \theta^j$ for any $j\in \Z_+$.
Also note that $f$ is a function of $(B^\pm_t(v))_{t\ge 0, v\not\in \GG^{(a+1)}}$, which is independent of $(\tau^\pm_v)_{v\not\in \GG^{(a+1)}}$.
Thus we have
\begin{equation}  \label{eq:esti-key:3}
\int f(2\theta)^{\kappa_{a+2}^{b+1}}d\Lambda
< 2(2\theta)^{b-a}\E_\Gamma[f]
\end{equation}
By plugging \eqref{eq:esti-key:3} into \eqref{eq:esti-key:2} we get \eqref{eq:key-est1}.

For \eqref{eq:key-est2}, using \eqref{eq:key-est-pf} we have
\[
\begin{split}
&\int f\don[\tau_{v_{b+1}}^- \neq \tau_{v_{b+1}}^+]\prod_{a<\ell\le b}P_\ell d\Lambda
\le \int f\don[\tau_{v_{b+1}}^- \neq \tau_{v_{b+1}}^+]\prod_{a<\ell\le b}R_\ell^+ d\Lambda \\
\le &
\prod_{i=0}^{k+1}
(1 + C(2\theta)^{\bbd_{a+2}^{b+1}(u_i,u_{i-1})})
(1+C(2\theta)^{a+2-l})^{l}
(1+C(2\theta)^{r-b-1})^{n-r+1}
\int f\don[\tau_{v_{b+1}}^- \neq \tau_{v_{b+1}}^+] d\Lambda.
\end{split}
\]
Again, using that $\Gamma(\cdot\mid \FF_{a+1} )=\Lambda(\cdot\mid \FF_{a+1} )$, and $\P_\Gamma(\tau_{v_{b+1}}^- \neq \tau_{v_{b+1}}^+ \mid \FF_{a+1}) \le \theta^{b-a}$,
and the independence of $(\tau^\pm_v)_{v\not\in \GG^{(a+1)}}$ and $(B^\pm_t(v))_{t\ge 0, v\not\in \GG^{(a+1)}}$ under $\Gamma(\cdot\mid \FF_{a+1})$, 
we can bound this by
\[
\theta^{b-a}
\prod_{i=0}^{k+1}
(1 + C(2\theta)^{\bbd_{a+2}^{b+1}(u_i,u_{i-1})})
(1+C(2\theta)^{a+2-l})^{l}
(1+C(2\theta)^{r-b-1})^{n-r+1}
\E_\Gamma[f],
\]
and \eqref{eq:key-est2} follows.
\end{proof}

\end{document}